\theoremstyle{definition}
\def\fnum{equation} 
\newtheorem{theorem}[\fnum]{Theorem}
\newtheorem{corollary}[\fnum]{Corollary}
\newtheorem{lemma}[\fnum]{Lemma}
\newtheorem{conjecture}[\fnum]{Conjecture}
\newtheorem{definition}[\fnum]{Definition}
\newtheorem{example}[\fnum]{Example}
\newtheorem{remark}[\fnum]{Remark}
\newtheorem{proposition}[\fnum]{Proposition}
\newtheorem{mainthm}{Theorem}
\numberwithin{equation}{section}
\newcommand{\Ker}{{\text{Ker}}}
\newcommand{\rank}{{\text{rank}}}
\newcommand{\inj}{{\text{inj}}}
\newcommand{\diam}{{\text {diam}}}
\newcommand{\supp}{{\text {supp}}}
\newcommand{\bet}{\text{b}_1}
\newcommand{\Id}{\text{Id}}
\newcommand{\iso}{\text{Iso}}
\newcommand{\vv}{\text{v}}
\newcommand{\ww}{\text{w}}
\newcommand{\dd}{\mathsf d}
\newcommand{\thi}{\text{thick}}
\newcommand{\sobo}{H^{1,2}}
\newcommand{\rcd}{\text{RCD}}
\newcommand{\prob}{\text{Prob}}
\newcommand{\cdkn}{\text{CD}(K,N)}
\newcommand{\haus}{\mathcal{H}}
\newcommand{\RCD}{\text{RCD}}
\newcommand{\Ric}{\text{Ric}}
\newcommand{\mm}{\mathfrak{m}}
\newcommand{\R}{\mathbb{R}}
\newcommand{\Z}{\mathbb{Z}}
\title{Topological rigidity of small RCD(K,N) spaces with maximal rank}
\author{Sergio Zamora}\address{Oregon State University}
\author{Xingyu Zhu} \address{Michigan State University}
\begin{document}

\begin{abstract}
For a polycyclic group $\Lambda$,  $\rank (\Lambda )$ is defined as the number of $\mathbb{Z}$ factors in a polycyclic decomposition of $\Lambda$. For a finitely generated group $G$,  $\rank (G)$ is defined as the infimum of $ \rank (\Lambda )$ among finite index polycyclic subgroups $\Lambda \leq G$.

For a compact $\rcd (K,N)$ space $(X,\mathsf{d}, \mm)$ with $\diam (X) \leq \varepsilon (K,N)$,  the rank of $\pi_1(X)$ is at most $N$. We show that in case of equality, $X$ is homeomorphic to an infranilmanifold, generalizing a result by Kapovitch--Wilking to the non-smooth setting.  
\end{abstract}

\maketitle




\section{Introduction}

A classical result of Gromov \cite{gromov}, refined by Ruh \cite{ruh}, asserts that a closed $N$-dimensional Riemannian manifold $X$ with 
\[          \hspace{0.8cm}           \vert \sec (X) \vert \leq K , \hspace{3cm} \diam (X) \leq \varepsilon (K,N),  \]
is diffeomorphic to an infranilmanifold.   Recall that an \emph{infranilmanifold} is  a compact smooth manifold of the form $G / \Gamma$ with  $G$ a simply-connected nilpotent Lie group and $\Gamma$ a torsion-free discrete subgroup of the semi-direct product of $G$ with a compact group of automorphisms of $G$. If $\Gamma \leq G$, then the infranilmanifold is called a \textit{nilmanifold}.

This full topological control is lost if one works with only a Ricci curvature bound; this is illustrated by the fact that the topology of Ricci flat manifolds is still quite mysterious. However, the fundamental group is very well understood. By the work of Kapovitch--Wilking \cite{kapovitch-wilking}, if a closed $N$-dimensional Riemannian manifold  $X$ satisfies
\[          \hspace{0.9cm}           \Ric (X)  \geq K , \hspace{3cm} \diam (X) \leq \varepsilon (K,N) , \]
then $\pi_1(X)$ contains a subgroup $\Lambda $ of index $[\pi_1(X): \Lambda ] \leq C ( K , N)$ generated by elements $\{ u_1, \ldots , u_N \} \subset \Lambda $ with $ [u_i , u_j ] \in \langle u_{j+1}, \ldots , u_{N} \rangle $ for each $i,j \in \{ 1, \ldots , N \}$. Moreover, if 
\[    \langle u_j, \ldots , u_N \rangle / \langle u_{j+1}, \ldots  , u_{N} \rangle \cong \mathbb{Z} \text{ for all } j \in \{ 1, \ldots , N \},  \]
then $X$ is homeomorphic to an infranilmanifold. Our main result is a generalization of this maximal rank topological rigidity to the non-smooth setting.  We refer the reader to Section \ref{subsec:rank} for the definition of rank of a finitely generated group.  

\begin {mainthm}\label{thm:max-rank-nilmanifold}
 For each $K \in \mathbb{R},$ $N \geq 1  $, there is $\varepsilon > 0$ such that if $(X,\mathsf{d} ,  \mm ) $ is an $\rcd (K,N)$ space with $\diam (X) \leq \varepsilon $, then $ \rank (\pi_1(X))  \leq N $, and in case of equality, $X$ is homeomorphic to an infranilmanifold of dimension $N$. 
\end{mainthm}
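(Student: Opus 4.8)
\section*{Proof proposal for Theorem~A}

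The plan is to argue by contradiction and, via an equivariant rescaling, to reduce the statement to a rigidity property of the universal cover. Suppose the theorem fails for some $K$ and $N$: there are compact $\rcd(K,N)$ spaces $(X_i,\dd_i,\mm_i)$ with $\diam(X_i)\to 0$ such that, for each $i$, either $\rank(\pi_1(X_i))>N$, or $\rank(\pi_1(X_i))=N$ and $X_i$ is not homeomorphic to any infranilmanifold. Rescaling $\dd_i$ by $\diam(X_i)^{-1}$ gives $\rcd(\hat K_i,N)$ spaces $\hat X_i$ with $\hat K_i\to 0$ and $\diam(\hat X_i)=1$. Each $\hat X_i$ admits a universal cover $\tilde X_i$, again $\rcd(\hat K_i,N)$, on which $\Gamma_i\defeq\pi_1(X_i)$ acts by deck isometries; passing to a subsequence we get pointed equivariant Gromov--Hausdorff convergence $(\tilde X_i,\tilde p_i,\Gamma_i)\to (Y,y,G)$ with $Y$ an $\rcd(0,N)$ space, $G\le\iso(Y)$ closed, and $Y/G$ equal to the compact limit of $\hat X_i$, of diameter $1$. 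By the generalized Margulis lemma for $\rcd(K,N)$ spaces --- the analog of the Kapovitch--Wilking theorem --- $\Gamma_i$ has a nilpotent subgroup of index bounded by a function of $N$; since $\iso(Y)$ is a Lie group and the isometry group of a compact metric space is compact, the group $G$ is virtually nilpotent, $G^0$ is nilpotent, and any discrete subgroup of $G$ acting on a compact factor of $Y$ is finite.

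Next I would extract the rigidity imposed by maximal rank. Since $G$ acts cocompactly on the $\rcd(0,N)$ space $Y$, the splitting theorem applied iteratively (a cocompact action on a non-compact $\rcd(0,N)$ space yields a line) gives an isometric splitting $Y=\mathbb R^k\times C$ with $C$ compact. Because $\iso(C)$ is compact, the part of $\Gamma_i$ responsible for the $C$-factor contributes nothing to the rank; running the equivariant-limit bookkeeping of Kapovitch--Wilking in this setting gives $\rank(\Gamma_i)\le k\le\dim Y\le N$. This already yields the claimed bound $\rank(\pi_1(X))\le N$. Assume now $\rank(\Gamma_i)=N$; then $k=\dim Y=N$, so $C$ is a point and $Y$ is the Euclidean de Rham factor, i.e.\ isometric to flat $\mathbb R^N$, and in particular the convergence $\tilde X_i\to\mathbb R^N$ is non-collapsed.

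From $\tilde X_i\to\mathbb R^N$ I would read off the topology of $X_i$. The Reifenberg-type regularity theorem for $\rcd(K,N)$ spaces applies: for large $i$, around every point and at every sufficiently small scale $\tilde X_i$ is uniformly Gromov--Hausdorff close to a Euclidean ball, so $\tilde X_i$ is a topological $N$-manifold, bi-H\"older homeomorphic to $\mathbb R^N$ locally and, having almost-maximal volume and being Euclidean at infinity, globally homeomorphic to $\mathbb R^N$. Then $\Gamma_i$ acts freely and properly discontinuously on $\tilde X_i\cong\mathbb R^N$ with compact quotient $\hat X_i$, so $\hat X_i$ is a closed aspherical $N$-manifold with $\pi_1=\Gamma_i$. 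A nontrivial finite subgroup of $\Gamma_i$ would act freely on the contractible space $\tilde X_i$, which is impossible by Smith theory, so $\Gamma_i$ is torsion-free; being also virtually nilpotent, it is an almost-Bieberbach group, hence $\Gamma_i\cong\pi_1(M_i)$ for an infranilmanifold $M_i$ whose dimension equals the Hirsch length of $\Gamma_i$, namely $N$. Since $\hat X_i$ and $M_i$ are homotopy equivalent closed aspherical manifolds, the topological rigidity of infranilmanifolds (the Borel conjecture for virtually poly-$\mathbb Z$ groups) gives a homeomorphism $\hat X_i\cong M_i$, and therefore $X_i\cong M_i$, contradicting the choice of the $X_i$.

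The step I expect to be the main obstacle is the rank count across the equivariant limit, that is, proving $\rank(\Gamma_i)\le\dim Y$ with equality forcing $Y=\mathbb R^N$. In the smooth setting Kapovitch--Wilking deduce the analogous statement with the help of the Cheeger--Fukaya--Gromov structure theory of collapsed manifolds, which has no counterpart for $\rcd$ spaces; the whole point of passing to the universal cover is to replace that input by the cleaner assertion that maximal rank prevents collapse of $\tilde X_i$, after which only $\rcd$ regularity theory (Reifenberg, structure of limit isometry groups) and classical aspherical rigidity are needed. A secondary difficulty --- already present in the $\pi_1=\mathbb Z^N$ case of Mondello--Mondino--Perales, which this paper also repairs --- is the non-collapsed $\rcd$ stability used to upgrade the metric convergence $\tilde X_i\to\mathbb R^N$ to the global homeomorphism $\tilde X_i\cong\mathbb R^N$.
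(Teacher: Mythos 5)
Your proposal correctly identifies the paper's high-level strategy: pass to universal covers, take an equivariant Gromov--Hausdorff limit, show the universal cover $\tilde X_i$ is a contractible manifold, and invoke topological rigidity of closed aspherical manifolds with virtually nilpotent fundamental group (Theorem~\ref{thm:borel-nilpotent}). However, the two steps you flag at the end as obstacles are genuine gaps in your sketch, and neither is closed by what you write.

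The first gap is the rank count $\rank(\Gamma_i)\le n \le N$, which is the real content of the theorem. You write that it follows by ``running the equivariant-limit bookkeeping of Kapovitch--Wilking,'' while correctly observing that the Cheeger--Fukaya--Gromov collapsing theory underlying that bookkeeping has no $\rcd$ counterpart; this is exactly where the paper departs from the smooth proof. The paper replaces the smooth machinery with the Breuillard--Green--Tao structure theory of approximate groups, via the nilprogression construction of Lemmas~\ref{lem:almost-homogeoeus-subgroups} and~\ref{lem:malcev-construction}. For this to apply, the rescaling must be chosen so that $\diam(X_i/G_i)\to 0$ (not normalized to $1$, as you propose), so that the deck action becomes \emph{almost homogeneous}; one then extracts a nilprogression $P_i\subset\Gamma_i$ in normal form and the rank bound becomes the algebraic identity $\rank(\Gamma_i)=\rank(\tilde\Gamma_i)-\rank(\Ker\Phi_i)\le n$ of Remark~\ref{rem:rank-equality}, with $\tilde\Gamma_i$ a torsion-free Malcev model. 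Your iterated-splitting observation $Y=\R^k\times C$ does constrain the limit but does not by itself bound $\rank(\Gamma_i)$ by $k$: the pieces of $\Gamma_i$ that ``converge into'' the compact factor can a priori still carry rank, and controlling this is precisely what requires the nilprogression structure.

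The second gap is the assertion that $\tilde X_i$ is globally homeomorphic to $\R^N$ because it ``has almost-maximal volume and is Euclidean at infinity.'' Theorem~\ref{thm:reifenberg-weak} gives homeomorphisms on balls $B_R(\tilde p_i)$ for $i$ large \emph{depending on} $R$; for a fixed $i$ this controls nothing at large scale, and the cocompact $\Gamma_i$-action does not automatically upgrade local charts to a global homeomorphism. The paper instead builds a $\Gamma_i$-equivariant, metrically controlled comparison map $\tilde f_i:X_i\to Y_i$ to a simply connected nilpotent Lie group $Y_i$ assembled from the nilprogression data (Lemma~\ref{lem:local-to-global-approximation} and Proposition~\ref{pro:almost-equivariant}), then transfers vanishing of $H_m$ and of $\pi_1$ across $\tilde f_i$ (Lemmas~\ref{lem:transfer-trivial-homology}, \ref{lem:transfer-simply-connected}), uses Corollary~\ref{cor:manifold-reifenberg} for the manifold structure and Lemma~\ref{lem:contractible} for contractibility, and only then deduces $\tilde X_i\cong\R^N$ as a \emph{consequence} of the Borel conjecture applied to $\tilde X_i/\Gamma_i$. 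Your Smith-theory route to torsion-freeness of $\Gamma_i$ would be fine (the paper instead gets it from the Malcev model in Remark~\ref{rem:rank-equality}), but it presupposes contractibility of $\tilde X_i$, which is the thing still needing proof. As a small side remark: the gap you attribute to Mondello--Mondino--Perales is not the metric-to-topological upgrade, but the failure of a bi-H\"older homeomorphism on a finite cover to descend without equivariance; that is what Theorem~\ref{thm:equivariant-torus} addresses.
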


Theorem \ref{thm:max-rank-nilmanifold} can be regarded as a non-abelian analogue of the following result \cite[Theorem 1.2.3]{mondello-mondino-perales}. Recall that for a path connected topological space $X$, its first Betti number $\bet (X)$  can be  defined as the supremum of $k $ for which there is a surjective morphism $\pi_1(X) \to \mathbb{Z}^k$.

\begin{theorem}[Mondello--Mondino--Perales]\label{thm:mmp}
For each $K \in \mathbb{R},$ $N \geq 1 $, there is $\varepsilon > 0$ such that if $(X,\mathsf{d},  \mm ) $ is an $\rcd (K,N)$ space with $\diam (X) \leq \varepsilon $, then $\bet (X) \leq N $, and in case of equality, $X$ is  bi-H\"older homeomorphic to a flat torus of dimension $N$.
\end{theorem}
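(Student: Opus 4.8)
The plan is to deduce both conclusions of Theorem~\ref{thm:mmp} from Theorem~\ref{thm:max-rank-nilmanifold}, and then to upgrade the resulting topological statement to the asserted bi-H\"older one by a Bochner-and-Reifenberg argument in the spirit of \cite{mondello-mondino-perales}. For the inequality, I would first record the purely group-theoretic fact that $\bet(X)\le\rank(\pi_1(X))$: if $\pi_1(X)$ surjects onto $\mathbb Z^k$, then every finite-index polycyclic subgroup $\Lambda\le\pi_1(X)$ surjects onto a finite-index subgroup of $\mathbb Z^k$ and hence has Hirsch length at least $k$, and the Hirsch length of a polycyclic group is exactly the number of $\mathbb Z$-factors in a polycyclic decomposition, i.e.\ its rank; taking the infimum over such $\Lambda$ gives $\rank(\pi_1(X))\ge k$. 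Combined with Theorem~\ref{thm:max-rank-nilmanifold} this produces $\varepsilon(K,N)>0$ with $\bet(X)\le\rank(\pi_1(X))\le N$ whenever $\diam(X)\le\varepsilon$.

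For the equality case, suppose $\bet(X)=N$; then $\rank(\pi_1(X))=N$ as well, so Theorem~\ref{thm:max-rank-nilmanifold} gives a homeomorphism of $X$ onto an $N$-dimensional infranilmanifold $M=G_0/\Gamma$. Writing $\Gamma_0=\Gamma\cap G_0$ for the cocompact lattice of finite index in $\Gamma$, I would compute $\bet(M)=\dim_{\mathbb R}\big((\Gamma^{\mathrm{ab}})\otimes\mathbb R\big)$, and by the Nomizu--Mal'cev description of $H^1$ of a nilmanifold together with a transfer argument for the finite group $\Gamma/\Gamma_0$ this equals $\dim\big((\mathfrak g_0^{\mathrm{ab}})^{*}\big)^{\Gamma/\Gamma_0}\le\dim\mathfrak g_0^{\mathrm{ab}}\le N$, with equality forcing $\mathfrak g_0$ to be abelian (so $G_0=\mathbb R^N$) and $\Gamma/\Gamma_0$ to act trivially (so it is trivial and $\Gamma\cong\mathbb Z^N$). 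Since $\bet$ is a homeomorphism invariant, $\bet(X)=N$ therefore forces $M\cong\mathbb R^N/\mathbb Z^N$, i.e.\ $X$ is homeomorphic to $T^N$.

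It remains to promote this to a bi-H\"older homeomorphism onto a \emph{flat} torus, which is the quantitative core of the statement. After rescaling so that $\diam(X)=1$, the space $X$ is $\rcd(\kappa,N)$ with $\kappa=K\varepsilon^2$ as close to $0$ as desired. I would use Hodge theory on the compact $\rcd$ space $X$ to produce a space of harmonic $1$-forms whose dimension is $\bet(X)=N$ (here one needs the identification of this dimension with the topological first Betti number, which requires care); the Bakry--\'Emery form of the Bochner inequality then shows every harmonic $1$-form is $O(\sqrt{|\kappa|})$-almost parallel, so the Abel--Jacobi map $\Phi\colon X\to H^1(X;\mathbb R)/H^1(X;\mathbb Z)$ built from an $L^2$-orthonormal frame of harmonic forms has an almost-orthonormal differential. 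Since $\bet(X)=N$ also forces the essential dimension of $X$ to be $N$, $\Phi$ is an $o(1)$-Gromov--Hausdorff approximation (as $\kappa\to0$) onto a torus carrying an almost-flat metric, whose limit is a flat torus by the $\rcd$ splitting/Cheeger--Gromoll theorem. Finally the almost-non-collapsing of $X$, together with the intrinsic-Reifenberg regularity theory for $\rcd(\kappa,N)$ spaces that are Gromov--Hausdorff close to Euclidean balls, should upgrade $\Phi$ to a bi-H\"older homeomorphism of $X$ onto the flat torus.

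The first three steps are soft once Theorem~\ref{thm:max-rank-nilmanifold} is in hand; the hard part, and the place where the \cite{mondello-mondino-perales} argument has a gap, is the last step --- converting the $\varepsilon$-Gromov--Hausdorff isometry $\Phi$ together with the volume estimate into an honest, quantitatively controlled homeomorphism. Unlike for Alexandrov spaces there is no off-the-shelf topological stability theorem for $\rcd$ spaces, so one has to establish directly an effective Reifenberg condition for $X$ at all scales --- ruling out small-scale topology for a space collapsing to a torus under a near-zero Ricci lower bound --- which calls for the $\epsilon$-regularity and neck-decomposition machinery of Cheeger--Colding--Naber adapted to the $\rcd$ setting, and one must also match the topological $\bet(X)$ with the Hodge-theoretic dimension $\dim\mathcal H^1(X)$ so that the Abel--Jacobi torus has the right dimension and the degree/volume comparison closes. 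Setting up this Reifenberg package with uniform H\"older exponents --- in a form robust enough to also underlie the more delicate nilmanifold collapse of Theorem~\ref{thm:max-rank-nilmanifold} --- is where the real work lies.
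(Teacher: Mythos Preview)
Your reduction of the inequality $\bet(X)\le N$ and of the statement ``$X$ is homeomorphic to $\mathbb{T}^N$'' to Theorem~\ref{thm:max-rank-nilmanifold} is correct, and is in fact a cleaner route than the paper takes for those two points: the paper simply cites \cite{mondello-mondino-perales} for them and does not re-derive them from Theorem~\ref{thm:max-rank-nilmanifold}.

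For the bi-H\"older upgrade, however, your proposal diverges from the paper both in strategy and in where you locate the difficulty. You work directly on $X$, build an Abel--Jacobi map from harmonic $1$-forms, and identify the ``hard part'' as establishing an intrinsic Reifenberg condition on $X$ at all scales. The paper does neither of these things. It keeps the \cite{mondello-mondino-perales} set-up: pass to finite covers $X_i'\to X_i$ with $X_i'\to\mathbb{T}^N$ in measured Gromov--Hausdorff. Reifenberg on $X_i'$ is \emph{not} the gap --- that step is already available from Cheeger--Colding/Kapovitch--Mondino and yields a bi-H\"older homeomorphism $X_i'\to\mathbb{T}^N$. The actual gap, as the paper explains in the introduction, is that such a homeomorphism has no reason to be equivariant for the deck group $G_i$, so it need not descend to a bi-H\"older map on $X_i$ itself. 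The paper's fix is Theorem~\ref{thm:equivariant-torus}: one builds the map $X_i'\to\mathbb{T}^N$ out of Laplace eigenfunctions in the Honda--Peng framework, arranging the near-$(-1)$ eigenspace into $G_i$-invariant $2$-planes so that the resulting embedding into $\mathbb{R}^{2N}$ is $G_i$-equivariant, and then invokes \cite{honda-peng} to get the bi-H\"older estimate. That equivariant map descends to $X_i=X_i'/G_i\to\mathbb{T}^N/G_i$, and $\mathbb{T}^N/G_i$ is again a flat torus.

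Your direct-on-$X$ approach could in principle sidestep the descent problem, but you have not closed it: the identification $\bet(X)=\dim\mathcal H^1(X)$ on a general $\rcd$ space, the claim that the Abel--Jacobi map is a Gromov--Hausdorff approximation onto a \emph{fixed} flat torus (rather than onto a moving family of flat tori), and the promotion of $\Phi$ itself --- rather than some abstractly produced Reifenberg map --- to a bi-H\"older homeomorphism are each left as programs. The paper's eigenfunction/equivariance route avoids all three issues simultaneously.
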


Previoulsy,  Cheeger--Colding  established that under the hypotheses of Theorem \ref{thm:mmp}, if $X$ is a smooth Riemannian manifold, then it is diffeomorphic to a flat torus \cite[Theorem A.1.13]{cheeger-colding-i}.  As a second goal of this paper,  we provide some missing details in the proofs of  \cite[Theorem 1.2.3]{mondello-mondino-perales} and \cite[Theorem A.1.13]{cheeger-colding-i}.  We first indicate where the missing details are. \color{black}By contradiction, one assumes there is a sequence of $\rcd (K,N)$ spaces $(X_i, \mathsf{d}_i, \mm_i)$ with 
\[ \diam (X_i) \to 0,  \hspace{3cm}  \bet (X_i) = N , \]
but $X_i$ not bi-H\"older homeomorphic to a flat torus. In \cite{mondello-mondino-perales}, a sequence of finite sheeted covers $X_i^{\prime} \to X_i$ is constructed in such a way that $X_i^{\prime}$ converges in the Gromov--Hausdorff sense to the flat torus $ \mathbb{T}^N$. From here, the Cheeger--Colding version of the Reifenberg Theorem  \cite[Appendix A]{cheeger-colding-i} implies that $X_i^{\prime}$ is bi-H\"older homeomorphic to $\mathbb{T}^N$ for $i$ large enough, and it is deduced using topological arguments that the spaces $X_i$ are homeomorphic to $\mathbb{T}^N$.

Their argument concludes here, but the proof is not finished, since a bi-H\"older homeomorphism $ X_i^{\prime} \to \mathbb{T}^N$ doesn't necessarily descend to a bi-H\"older homeomorphism between $X_i$ and a flat torus. Moreover, if $N\geq 5 $, there exists exotic tori, i.e., smooth manifolds homeomorphic to the standard $N$-torus but not diffeomorphic to it \cite[Section 15.A]{wall}, and any such 
exotic tori admits a finite-sheeted covering diffeomorphic to the standard $N$-torus \cite[Theorem A.1]{fisher-kalinin-spazier}.

\color{black}

We address this with the following result, which is a modification of the \emph{canonical homeomorphism} constructed by Honda--Peng in \cite[Theorem 6.3]{honda-peng} (see the end of Section \ref{sec:canonical} for further details).  We refer the reader to Section \ref{sec:mgh} for the definition of measured Gromov--Hausdorff convergence. 


\color{black}

\begin{mainthm}\label{thm:equivariant-torus}
    For each $K \in \mathbb{R}$, $N \geq 1$, $\varepsilon > 0 $, there is $\delta > 0 $ such that if  $(X, \mathsf{d}, \mm  )$ is a compact $\rcd (K,N)$ space $\delta$-close to $\mathbb{T}^N$ in the measured Gromov--Hausdorff sense,  and  there is an abelian group $G \leq \iso (X)$ with $\diam (X / G ) < \delta $, then there is a free isometric action of $G$ on $\mathbb{T}^N$ and a $G$-equivariant homeomorphism $F : X \to \mathbb{T}^N$ with 
    \begin{equation}\label{eq:bi-holder-good}
         (1 - \varepsilon ) \, \mathsf{d}( x,y ) ^{1 + \varepsilon } \leq \mathsf{d}( F (x), F (y)  ) \leq (1 + \varepsilon ) \, \mathsf{d}(x,y) \, \text{ for all } x,y \in X.  
    \end{equation} 
    Furthermore, if $X$ is a smooth Riemannian manifold, the map $F$ can be taken to be a diffeomorphism.     
\end{mainthm}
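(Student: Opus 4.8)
The plan is to take the (non-equivariant) canonical homeomorphism of Honda--Peng and average it against the $G$-action to make it equivariant, simultaneously averaging an associated approximate cocycle to produce a genuine isometric model action on $\mathbb{T}^N$. Replacing $G$ by its closure in the compact Lie group $\iso(X)$, we may assume $G$ is a compact abelian Lie group (the conclusion for the closure restricts to the conclusion for $G$). Since $\mathbb{T}^N$ has essential dimension $N$, measured Gromov--Hausdorff closeness of $X$ to $\mathbb{T}^N$ forces the convergence to be non-collapsed (so that, after normalization, $\mm$ is uniformly comparable to $\haus^N$), and hence \cite[Theorem 6.3]{honda-peng} applies once $\delta$ is small enough: it produces a bi-H\"older homeomorphism $F_0\colon X\to\mathbb{T}^N$, a diffeomorphism when $X$ is a smooth Riemannian manifold, whose bi-H\"older exponent and constant tend to $1$ as $\delta\to 0$; in particular $F_0$ satisfies an estimate of the form \eqref{eq:bi-holder-good}. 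The only missing feature is equivariance.

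For $g\in G$ set $\phi_g\defeq F_0\circ g\circ F_0^{-1}$, a homeomorphism of $\mathbb{T}^N$. Composing the bi-H\"older bounds for $F_0$ and $F_0^{-1}$ with the fact that $g$ is an isometry of $X$ shows that $\phi_g$ is an $\eta$-rough isometry of $\mathbb{T}^N$, with $\eta=\eta(\delta)\to 0$ \emph{uniformly} in $g$; a standard compactness argument then produces an isometry $\psi_g\in\iso(\mathbb{T}^N)$ with $\sup_{x\in\mathbb{T}^N}\dist(\phi_g(x),\psi_g(x))\le\omega(\delta)$, $\omega(\delta)\to 0$. Since $g\mapsto\phi_g$ is an honest homomorphism $G\to\mathrm{Homeo}(\mathbb{T}^N)$ (one has $\phi_{gh}=\phi_g\phi_h$ exactly), the map $g\mapsto\psi_g$ is a uniform $\omega(\delta)$-almost homomorphism into $\iso(\mathbb{T}^N)=\mathbb{T}^N\rtimes H$ with $H$ finite. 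I would then correct it: the $H$-components define an honest homomorphism $\gamma\colon G\to H$ (as $H$ is discrete), while the translation components $w_g$ satisfy the $\gamma$-twisted almost-cocycle identity $w_{gh}\approx w_g+\gamma(g)w_h$; as the error is uniformly $o(1)$, the terms $w_{gh}-\gamma(g)w_h$ cluster near $w_g$ and the Haar average $\bar w_g\defeq\int_G\big(w_{gh}-\gamma(g)w_h\big)\,dh$ (computed on a lift to $\R^N$) is, by a telescoping identity using invariance of Haar measure, an \emph{exact} $\gamma$-twisted cocycle. Thus $\sigma(g)\defeq(\bar w_g,\gamma(g))$ defines an honest homomorphism $\sigma\colon G\to\iso(\mathbb{T}^N)$ with $\sup_g\sup_{x}\dist(\sigma(g)(x),\phi_g(x))\le\omega'(\delta)\to 0$. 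The essential point is that this correction has \emph{no} error accumulation over $G$, which may be arbitrarily large.

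Next I would average $F_0$ itself: for $x\in X$ define $F(x)$ to be the barycenter, over $g\in G$ with respect to Haar measure, of the points $\sigma(g)^{-1}\big(F_0(gx)\big)\in\mathbb{T}^N$. Each such point lies within $\omega'(\delta)$ of $F_0(x)$ (because $F_0(gx)$ lies within $\omega'(\delta)$ of $\sigma(g)F_0(x)$ and $\sigma(g)^{-1}$ is an isometry), so for $\delta$ small the barycenter is formed inside a convex metric ball of $\mathbb{T}^N$, is well defined, lies within $\omega'(\delta)$ of $F_0(x)$, and depends smoothly on $x$ when $F_0$ is smooth. Using that the barycenter commutes with isometries of $\mathbb{T}^N$ and that Haar measure on $G$ is translation invariant, a change of variables in the average yields the exact identity $F(hx)=\sigma(h)\big(F(x)\big)$ for all $h\in G$. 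Being $\omega'(\delta)$-close to $F_0$, the map $F$ still satisfies the two bi-H\"older bounds (with slightly worse constants, absorbed by shrinking $\delta$), hence is a homeomorphism satisfying \eqref{eq:bi-holder-good}, and a diffeomorphism in the smooth case. Finally $\sigma$ is injective, since $\sigma(g)=\Id$ forces $F\circ g=F$ and hence $g=\Id$; and the $\sigma(G)$-action is isometric by construction with $\diam(\mathbb{T}^N/\sigma(G))\le(1+\varepsilon)\diam(X/G)<\delta$, so, using that $\sigma(G)$ is abelian with an $o(1)$-dense orbit, one sees --- from the structure of $\iso(\mathbb{T}^N)=\mathbb{T}^N\rtimes H$, $H$ finite, or directly from the faithfulness of the isotropy representation of the RCD space $X$ --- that all $H$-components of elements of $\sigma(G)$ are trivial; thus $\sigma(G)$ acts by translations, in particular freely, and $F$ is the desired $G$-equivariant homeomorphism (diffeomorphism in the smooth case).

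The main obstacle is exactly the upgrade from approximate to exact equivariance carried out in the last two paragraphs: one must build the genuine isometric model action $\sigma$ with no loss that degrades as $G$ grows, and then average $F_0$ in a way that is exactly equivariant yet only perturbs $F_0$ on the scale $\omega'(\delta)$, so that the estimate \eqref{eq:bi-holder-good} and, in the smooth case, the diffeomorphism property survive. Averaging the cocycle and the map over the Haar measure of the compact group $G$ is what makes both steps error-free; the remaining care lies in the well-definedness of the barycenter at scale $\omega'(\delta)$, the $C^1$-control required in the smooth case, and the elementary but slightly fiddly fact that smallness of $\diam(X/G)$ forces the resulting isometric action on $\mathbb{T}^N$ to be free. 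The quantitative dependence of $\delta$ on $(K,N,\varepsilon)$ follows from the usual compactness/contradiction reformulation.
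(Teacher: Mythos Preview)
Your approach is genuinely different from the paper's, and the difference is exactly where the difficulty lies. The paper does \emph{not} take the Honda--Peng homeomorphism as a black box and then average it; instead it opens the construction and makes the underlying eigenmap equivariant from the start. Since $\Delta$ commutes with isometries, the space $V_i=\bigoplus_{1/2<\lambda<3/2}\ker(\Delta+\lambda)\subset\sobo(X_i)$ carries a linear isometric $G_i$-action, and simultaneous diagonalization (Lemma~\ref{lem:simultaneous-diagonalization}) splits it into $G_i$-invariant planes $W_{j,i}$ spanned by eigenfunctions. After checking (Lemmas~\ref{lem:w-ji-to-w-j} and~\ref{lem:min-d-phi-w}) that suitable orthonormal bases of these planes converge to the coordinate functions of $\mathbb{T}^N\subset\mathbb{R}^{2N}$, the resulting eigenmap $\Phi_i:X_i\to V_i\cong\mathbb{R}^{2N}$ and the radial projection $\pi_i$ onto the product of unit circles are both $G_i$-equivariant \emph{by construction}. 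Theorem~\ref{thm:canonical-homeomorphism} then applies directly to $F_i=\Phi^{-1}\circ\pi_i\circ\Phi_i$ and yields the bi-H\"older estimate with no averaging step at all.

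The gap in your route is the sentence ``Being $\omega'(\delta)$-close to $F_0$, the map $F$ still satisfies the two bi-H\"older bounds.'' Uniform $C^0$-closeness gives nothing for the lower bound $(1-\varepsilon)\,\mathsf{d}(x,y)^{1+\varepsilon}\le\mathsf{d}(F(x),F(y))$ at scales with $\mathsf{d}(x,y)^{1+\varepsilon}\lesssim\omega'(\delta)$, and barycenter averaging does not rescue it: the barycenter is $1$-Lipschitz in Wasserstein distance, so the \emph{upper} bound survives, but for fixed $x,y$ the displacement vectors $F_g(x)-F_g(y)$ (each of the correct length) may point in different directions as $g$ varies and cancel upon integration. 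Ruling this out would require the family $F_g=\sigma(g)^{-1}\circ F_0\circ g$ to be close in a $C^1$ or directional sense, not merely $C^0$; note that the bi-H\"older lower bound itself imposes \emph{no} infinitesimal lower bound on $DF_0$, since $(1-\varepsilon)\,t^{1+\varepsilon}/t\to 0$. You flag ``the $C^1$-control required in the smooth case'' as a point of care but give no mechanism for it, and in the $\rcd$ setting none is available from the black-box $F_0$. This is precisely why the paper re-enters the Honda--Peng construction: exploiting that the eigenmap already transforms linearly under $G_i$ produces an equivariant $F_i$ to which the bi-H\"older machinery applies directly. Your averaging strategy is closer in spirit to Huang~\cite{huang}, which succeeds in the smooth category because genuine $C^1$-control is available there; the paper's own remark following the Heisenberg example identifies exactly this averaging-versus-regularity tension as the obstacle to extending such arguments (for Conjecture~\ref{con:bi-holder}) beyond the smooth case.
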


    In the setting of smooth Riemannian manifolds, Huang obtained a similar result for more general limit spaces and groups  \cite[Corollary 1.9]{huang}, which can be used in an analogous way to fill the gap in the proof of  \cite[Theorem A.1.13]{cheeger-colding-i}.

\subsection{Strategy and Outline}\label{subsec:outline}

The Borel conjecture asserts that any two closed aspherical manifolds with isomorphic fundamental group are homeomorphic \cite{rosenberg}. Since this conjecture is known to be true for manifolds with virtually nilpotent fundamental groups (see Theorem \ref{thm:borel-nilpotent}), and the Margulis Lemma holds for $\rcd (K,N)$ spaces (see \cite{deng-santos-zamora-zhao}), then the proof of Theorem \ref{thm:max-rank-nilmanifold} consists mainly of showing that if $\diam (X) \leq \varepsilon $ and $\rank (\pi_1(X)) = N$, then $X$ is a closed aspherical topological manifold.  This approach is also used in \cite{kapovitch-wilking} for the  proof of the smooth case. However, our proof is significantly different and builds upon the work of Breuillard--Green--Tao on the structure of approximate groups \cite{breuillard-green-tao}.

Theorem \ref{thm:equivariant-torus} is proven by contradiction. Assuming there is a sequence of $\rcd (K,N)$ spaces $(X_i, \mathsf{d}_i, \mm_i)$ and groups $G_i \leq \iso (X_i)$ that contradict the statement of the theorem, we show that for $i$ large enough, using eigenfunctions of the Laplacian on $X_i$, one can find  representations  $G_i \to O(2N) $ and equivariant maps $\Phi_i : X_i \to \mathbb{R}^{2N}$ such that $\mathbb{T}^N \subset \mathbb{R}^{2N}$ is $G_i$-invariant and the maps $\Phi_i$ satisfy the conditions of \cite[Theorem 1.2.2]{honda-peng}, which we apply to deduce \eqref{eq:bi-holder-good}.

In Section \ref{sec:discussion} we discuss related results and conjectures. In Section \ref{sec:prelims} we cover the background material we need for our main results. In  Section \ref{sec:topology} we prove the technical topological tools required for our main theorems. In Section \ref{sec:aspherical} we prove Theorem \ref{thm:max-rank-nilmanifold}.   In Section \ref{sec:canonical} we prove Theorem \ref{thm:equivariant-torus} and  fill in the details  \color{black}in the proof of Theorem \ref{thm:mmp}.

\section{Related problems and results}\label{sec:discussion}

Under the conditions of Theorem \ref{thm:max-rank-nilmanifold}, if $X$ is a smooth $N$-dimensional Riemannian manifold with $\rank (\pi_1(X)) = N$, then it follows from \cite[Proposition 5.4]{naber-zhang} and the work of Huang--Kong--Rong--Xu \cite{huang-kong-rong-xu} that $X$ is diffeomorphic to an infranilmanifold (this was also proven by Naber--Zhang \cite{naber-zhang} assuming an upper Ricci curvature bound). We note, however, that their proof relies on the Ricci flow, so their techniques do not extend to the non-smooth setting. Later Rong \cite{rong} gave a more geometric proof inspired by Cheeger--Fukaya--Gromov collapsing theory \cite{cheeger-fukaya-gromov}, but some tools used there still rely on the smooth structure. In view of this discussion and Theorem \ref{thm:mmp}, it would be interesting to know if the conclusion of  Theorem \ref{thm:max-rank-nilmanifold} can be improved to a bi-H\"older homeomorphism. 

\begin{conjecture}\label{con:bi-holder}
   For each $K \in \mathbb{R},$ $ N\geq 1  $, there is $\varepsilon > 0$ such that if $(X,\mathsf{d} ,  \mm ) $ is an $\rcd (K,N)$ space with $\diam (X) \leq \varepsilon $, then $ \rank (\pi_1(X))  \leq N $, and in case of equality, $X$ is bi-H\"older homeomorphic to an infranilmanifold of dimension $N$.  
\end{conjecture}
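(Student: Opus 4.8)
\textbf{Proof proposal for Theorem \ref{thm:max-rank-nilmanifold}.}

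The plan is to argue by contradiction and reduce the problem to showing that $X$ is a closed aspherical topological manifold with virtually nilpotent fundamental group, at which point the Borel conjecture for such manifolds (Theorem \ref{thm:borel-nilpotent}) together with the algebraic identification of $\pi_1(X)$ as a virtually nilpotent group of rank exactly $N$ (hence commensurable with an $N$-dimensional simply-connected nilpotent Lie group, so the quotient is an infranilmanifold) finishes the proof. So the real work is topological: under $\diam(X)\le\varepsilon$ and $\rank(\pi_1(X))=N$, we must show $X$ is an aspherical $N$-manifold. First I would establish the bound $\rank(\pi_1(X))\le N$; this follows from the structure of the fundamental group under the small-diameter hypothesis — by the Margulis-type lemma for $\rcd(K,N)$ spaces (\cite{deng-santos-zamora-zhao}) and the Breuillard--Green--Tao structure theory of approximate groups, $\pi_1(X)$ is virtually nilpotent, and passing to the asymptotic cone / rescaled limit one gets a nilpotent Lie group action on the equivariant Gromov--Hausdorff limit (which is $\rcd(0,N)$), whose orbit is a nilmanifold of dimension $\le N$ by the splitting theorem and the essential-dimension bound; the polycyclic rank of a finite-index subgroup equals this orbit dimension.

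\smallskip

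The equality case is where the maximal-rank hypothesis is used, and the strategy is to show it forces non-collapse at the level of the universal cover. Concretely, rescale so that $\diam(X)\to 0$ along a contradicting sequence $X_i$; pass to the sequence of universal covers $(\widetilde X_i, \tilde x_i, \pi_1(X_i))$ equipped with the deck action, and extract an equivariant pointed measured Gromov--Hausdorff limit $(Y, y, G)$ where $G$ is the limit group (a Lie group containing a nilpotent Lie group $G_0$ of dimension equal to $\rank(\pi_1(X_i))=N$). The key point is that $Y$ is an $\rcd(0,N)$ space on which $G_0$ acts with $\diam(Y/G_0)=0$, i.e. $G_0$ acts transitively up to the identity component; by the essential-dimension rigidity for $\rcd(0,N)$ spaces (no more than $N$ dimensions are available and a transitive nilpotent Lie group action of dimension $N$ exhausts them), $Y$ is isometric to a simply-connected nilpotent Lie group $G_0$ with a left-invariant metric, hence contractible and in particular an $N$-manifold. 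This is the maximal-rank input: if $\rank(\pi_1(X_i))$ were strictly less than $N$, the limit $Y$ could be a lower-dimensional space and the covers $\widetilde X_i$ could still collapse, destroying any manifold conclusion.

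\smallskip

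Next I would upgrade the limit statement to a statement about $X_i$ itself using the non-collapsing. Since $Y$ is an $N$-dimensional nilpotent Lie group and $\widetilde X_i \to Y$ in the pointed measured GH sense with the renormalized measures converging, the spaces $\widetilde X_i$ are non-collapsed $\rcd(0,N)$ spaces for $i$ large; by the structure theory of non-collapsed $\rcd(0,N)$ limits and the topological stability tools developed in Section \ref{sec:topology} (a Reifenberg-type / canonical-homeomorphism argument — this is precisely where the canonical homeomorphism machinery, and its equivariant refinement in Theorem \ref{thm:equivariant-torus}-style arguments, enters), $\widetilde X_i$ is a topological manifold homeomorphic to $Y$, hence to $\R^N$, for $i$ large. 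In particular $\widetilde X_i$ is contractible, so $X_i = \widetilde X_i / \pi_1(X_i)$ is aspherical; moreover the deck action is free and properly discontinuous, and since $\widetilde X_i$ is an $N$-manifold the quotient $X_i$ is a closed aspherical topological $N$-manifold. Finally, $\pi_1(X_i)$ is virtually nilpotent of rank $N$, so by Theorem \ref{thm:borel-nilpotent} it is homeomorphic to the infranilmanifold with the same fundamental group, contradicting the assumption that $X_i$ is not homeomorphic to an infranilmanifold.

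\smallskip

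\textbf{Main obstacle.} The hard part will be the passage from the equivariant GH convergence of universal covers to the honest topological conclusion that $\widetilde X_i$ (and hence $X_i$) is a manifold — that is, promoting "$\widetilde X_i$ is non-collapsed and GH-close to a nice $N$-manifold $Y$" into "$\widetilde X_i$ is homeomorphic to $Y$". In the collapsed setting one has no a priori manifold structure on $X_i$, so one cannot simply quote Reifenberg; the argument must be run upstairs on the cover where non-collapse holds, and then one needs the homeomorphism to be compatible with the deck action (or at least proper and cocompact enough to descend). Controlling the interplay between the (possibly non-discrete, non-nilpotent) limit group $G$ and the discrete groups $\pi_1(X_i)$ — in particular arranging that the manifold structure descends to the quotient and that the fundamental group of the limit object matches — is the delicate point; this is exactly the kind of equivariant topological-stability statement for which Section \ref{sec:topology} and the techniques behind Theorem \ref{thm:equivariant-torus} are designed, and adapting them from the torus/abelian case to the infranilmanifold/nilpotent case is the core technical contribution.
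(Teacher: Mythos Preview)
The statement you were asked to address is Conjecture~\ref{con:bi-holder}, which is \emph{open} in the paper; it strengthens Theorem~\ref{thm:max-rank-nilmanifold} from ``homeomorphic'' to ``bi-H\"older homeomorphic''. Your proposal, despite the header, is a proof of Theorem~\ref{thm:max-rank-nilmanifold} only: the concluding step invokes the Borel conjecture (Theorem~\ref{thm:borel-nilpotent}), which produces a bare topological homeomorphism with no metric control whatsoever. Nothing in your argument touches the bi-H\"older estimate, and the Borel route cannot give it --- the homeomorphism it yields is obtained by surgery-theoretic rigidity, not by any geometric construction close to the identity. The paper explicitly identifies the missing ingredient (see the remark following the Heisenberg example): one would need the equivariant extension procedure of Lemma~\ref{lem:local-to-global-approximation} to output a locally bi-H\"older map rather than merely a continuous equivariant Gromov--Hausdorff approximation, and this is left as an open technical problem.

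Regarding your argument as a sketch of Theorem~\ref{thm:max-rank-nilmanifold}, it is broadly correct and matches the paper's strategy, but two points deserve correction. First, the limit $Y$ of the rescaled universal covers is not merely a simply connected nilpotent Lie group: in the maximal-rank case the paper shows $Y=\mathbb{R}^N$ (via Theorem~\ref{thm:dim}, Honda--Nepechiy, and Milnor's result that a nilpotent Lie group carrying an invariant metric of nonnegative Ricci is abelian). This matters because the Reifenberg-type input (Theorem~\ref{thm:reifenberg-weak}, Corollaries~\ref{cor:manifold-reifenberg} and~\ref{cor:locally-contractible-reifenberg}) is stated for limits equal to $\mathbb{R}^N$. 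Second, the paper does \emph{not} obtain a homeomorphism $\widetilde X_i\to Y$ by a direct Reifenberg/canonical-homeomorphism argument. Instead it (i) shows $\widetilde X_i$ is a topological $N$-manifold locally, (ii) builds, from the nilprogression structure of $\Gamma_i$ (Lemma~\ref{lem:malcev-construction}), an almost-equivariant local approximation $f_i$ to a model simply connected nilpotent group $Y_i$, (iii) globalizes and equivariantizes it via Lemma~\ref{lem:local-to-global-approximation}, and (iv) uses the homology/homotopy transfer Lemmas~\ref{lem:transfer-trivial-homology}--\ref{lem:transfer-simply-connected} together with Lemma~\ref{lem:contractible} to conclude that $\widetilde X_i$ is contractible. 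Your ``main obstacle'' paragraph correctly flags exactly this passage as the delicate point, but the resolution is the transfer-lemma route, not an equivariant canonical homeomorphism in the style of Theorem~\ref{thm:equivariant-torus}.
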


The following proposition shows that the approach of constructing finite-sheeted covers $X_i^{\prime} \to X_i$ that don't collapse fails when dealing with Conjecture  \ref{con:bi-holder} because in general, fundamental groups of infranilmanifolds are not virtually abelian.

\begin{proposition}\label{pro:failure-finite-sheeted-covers}
 Let $X_i$ be a sequence of closed  $N$-dimensional Riemannian manifolds with  $ \Ric (X_i) \geq - \frac{1}{i} $ and $\pi_1(X_i)$ torsion-free for all $i$. If there is a sequence of finite-sheeted covers $X_i^{\prime} \to X_i$ with 
\[  \diam (X_i^{\prime}) \leq D, \hspace{3cm} \haus^N (X_i^{\prime}) \geq \nu   \]
for some $D, \nu > 0$, then $\pi_1(X_i)$ is virtually abelian for large $i$.  
\end{proposition}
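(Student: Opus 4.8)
The plan is to pass to the universal cover, take an equivariant limit, and identify the limit group. First, it is enough to prove that $\pi_1(X_i')$ is virtually abelian for $i$ large: since $X_i' \to X_i$ is a finite-sheeted cover, $\pi_1(X_i')$ sits inside $\pi_1(X_i)$ with finite index, so a finite-index abelian subgroup of $\pi_1(X_i')$ is also one of $\pi_1(X_i)$. Assume for contradiction that, after passing to a subsequence, $\pi_1(X_i')$ is not virtually abelian for any $i$. Since $\Ric(X_i') \ge -\tfrac1i \ge -(N-1)$ for $i$ large, $\diam(X_i') \le D$ and $\haus^N(X_i') \ge \nu$, Gromov precompactness yields (after a further subsequence) convergence $X_i' \to Y$ in the measured Gromov--Hausdorff sense to a compact $\RCD(0,N)$ space $Y$, which is non-collapsed because $\haus^N(Y) = \lim \haus^N(X_i') \ge \nu$.

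Let $\tilde X_i' \to X_i'$ be the Riemannian universal cover with deck group $\Gamma_i := \pi_1(X_i')$, and fix lifts $\tilde p_i$ of basepoints. By precompactness of equivariant pointed Gromov--Hausdorff convergence, after a further subsequence $(\tilde X_i', \tilde p_i, \Gamma_i) \to (\hat Y, \hat p, G)$, where $G \le \iso(\hat Y)$ is a closed subgroup acting properly and cocompactly with $\hat Y / G$ isometric to $Y$. A Bishop--Gromov comparison together with $\haus^N(X_i') \ge \nu$ shows that balls of a fixed radius in $\tilde X_i'$ have volume bounded below, so $\hat Y$ is a non-collapsed $\RCD(0,N)$ space; hence $\iso(\hat Y)$ is a Lie group and so is $G$. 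By the splitting theorem for $\RCD$ spaces, $\hat Y$ splits isometrically as $\R^k \times Z$, with $Z$ a non-collapsed $\RCD(0,N-k)$ space containing no line. Since $Z$ carries an isometric action with compact quotient but no line, $Z$ must be compact: otherwise, pulling a ray of $Z$ back by the group and invoking Arzel\`a--Ascoli would produce a line in $Z$. Consequently $\iso(Z)$ is a compact Lie group and $\iso(\hat Y) = \iso(\R^k) \times \iso(Z)$, so $G \le \iso(\R^k) \times \iso(Z)$ with the second factor compact.

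Next I would show that $G$ is virtually abelian. Rescaling $X_i'$ so that its diameter is at most $\eps(N)$ (permissible for $i$ large since $\Ric(X_i') \ge -\tfrac1i$), the Kapovitch--Wilking structure theorem \cite{kapovitch-wilking} shows that $\Gamma_i$ has a nilpotent subgroup of index $\le C(N)$ and nilpotency class $\le N$; passing to the limit, $G^0$ is a connected nilpotent Lie group and $G$ contains a finite-index closed subgroup that is virtually nilpotent. A connected nilpotent Lie subgroup $H$ of $\iso(\R^k) \times (\text{compact Lie group})$ is necessarily abelian: its projection to $\iso(\R^k) = \R^k \rtimes O(k)$ is connected nilpotent, and the rotational part of any translation lying in it is a skew-symmetric nilpotent operator, hence zero, so this projection is abelian; its projection to the compact factor is a connected nilpotent subgroup of a compact Lie group, hence a torus. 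Therefore $G^0$ is abelian, and since $G$ is virtually nilpotent with all "rotational'' data contained in compact groups, an application of the Bieberbach theorem to the induced action on the Euclidean factor $\R^k$ shows that $G$ has an abelian subgroup of finite index.

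The remaining step -- transferring virtual abelianness from $G$ back to $\Gamma_i = \pi_1(X_i')$ -- is, I expect, the main obstacle. I would carry it out via a systole estimate: under almost non-negative Ricci curvature, bounded diameter and non-collapsing, $\mathrm{sys}(X_i') \ge s(N,D,\nu) > 0$ for $i$ large (a compactness argument: a limit with vanishing systole would be a non-collapsed $\RCD(0,N)$ space whose universal cover admits deck transformations of arbitrarily small displacement, which is incompatible with the structure above). Then $G$ acts freely with a uniform displacement bound, hence is discrete, and the equivariant convergence $(\tilde X_i', \Gamma_i) \to (\hat Y, G)$ upgrades -- by the standard stability of discrete cocompact isometric actions under equivariant Gromov--Hausdorff convergence with a uniform displacement bound -- to isomorphisms $\Gamma_i \cong G$ for $i$ large; since $G$ is virtually abelian, so is $\pi_1(X_i')$, contradicting the assumption. (Alternatively, without the systole estimate: the volume growth exponent of $\tilde X_i'$, equal to the polynomial growth degree of $\Gamma_i$, converges to the dimension $k$ of the Euclidean factor of $\hat Y$, which matches the Hirsch length of the nilpotent finite-index subgroup produced by Kapovitch--Wilking, and by the Bass--Guivarc'h formula a torsion-free nilpotent group whose growth degree equals its Hirsch length is abelian.)
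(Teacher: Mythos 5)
Your proposal takes a genuinely different route from the paper's, and the overall outline is sound, but there are gaps in the details and the key external input is the same one the paper uses.

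The paper's proof, after producing the equivariant limit $(\tilde X_i, \Gamma_i) \to (Y,G)$ of the universal covers, observes that non-collapsing forces $G$ to be discrete, then invokes Pan--Rong to conclude $\Gamma_i \cong G$ for large $i$, invokes Pan--Wang to compute $\pi_1(Y/G) = G / \langle \mathrm{torsion} \rangle = G$ (using that $G$ is torsion-free), and finally cites the fact that compact $\rcd(0,N)$ spaces have virtually abelian fundamental group. Your proposal avoids Pan--Wang entirely: instead of passing through $\pi_1(Y/G)$, you analyze the limit group $G$ directly by splitting $\hat Y = \R^k \times Z$ with $Z$ compact, observing $\iso(\hat Y) = \iso(\R^k) \times \iso(Z)$, and applying a Bieberbach-type argument. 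This is a legitimate alternative, and in fact it is essentially a re-derivation of the cited ``compact $\rcd(0,N)$ has virtually abelian $\pi_1$'' fact. What you lose is conciseness; what you gain is a self-contained argument that makes the underlying Bieberbach mechanism explicit and, incidentally, does not seem to require the torsion-free hypothesis for the structure of $G$ itself (though torsion-freeness still enters at the transfer step via Pan--Rong, as it must).

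There are, however, two concrete gaps. First, the logical ordering is off: you apply the Bieberbach theorem to $G$ before you have established that $G$ is discrete. The discussion of $G^0$ being a connected nilpotent Lie subgroup of $\iso(\R^k) \times (\mathrm{compact})$ is vacuous once one knows $G$ is discrete (since then $G^0 = \{1\}$), so the invocation of Kapovitch--Wilking in the middle of your argument does no work; but Bieberbach genuinely needs discreteness, which you only address afterward via the systole estimate. Discreteness of $G$ does follow from non-collapsing (roughly: a positive-dimensional identity component would force $\haus^N(Y) = 0$, contradicting volume convergence), and the paper simply asserts this; you should establish it before the Bieberbach step rather than after. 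Second, your systole estimate is a detour whose sketched justification does not quite hold up: vanishing systoles give deck transformations $g_i$ with small displacement at $\tilde p_i$, but these need not generate small subgroups (a translation with tiny step has unbounded powers) nor need they produce anything pathological in the limit $Y$. In any case the systole bound is unnecessary: once $G$ is known to be discrete, the ``standard stability of discrete cocompact isometric actions'' you invoke is exactly the Pan--Rong theorem the paper cites, and that is the real content of the transfer step. You correctly identified this transfer as the main obstacle; the point to note is that Pan--Rong (or an equivalent) is unavoidable, and your two alternatives both reduce to it (the Bass--Guivarc'h alternative, in particular, needs the growth degree of $\Gamma_i$ to eventually equal that of $G$, which is again effectively a Pan--Rong-type stability statement).
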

\begin{proof}
Arguing by contradiction, after passing to a subsequence, we can assume $\pi_1(X_i)$ is not virtually abelian for any $i$. Let $\tilde{X}_i$ be the sequence of universal covers, $p_i \in X_i^{\prime}$,  and $\tilde{p}_i \in \tilde{X}_i$ in the preimage of $p_i$.  By Theorems \ref{thm:fukaya-yamaguchi} and \ref{thm:fukaya}, after further passing to a subsequence, the triples $(\tilde{X}_i, \tilde{p}_i, \pi_1(X_i^{\prime}))$ converge in the equivariant Gromov--Hausdorff sense to a triple $(Y, \tilde{p} , G)$, while $X_i^{\prime}$ converges to $Y / G$. 

Since the sequence $X_i^{\prime}$ doesn't collapse, neither does the sequence $\tilde{X}_i$ and the limit group $G$ is discrete so by the work of Pan--Rong 
\cite[Theorem 2.17]{pan-rong} the groups $\pi_1(X_i^{\prime})$ are isomorphic to $G$ for large $i$. By the work of Pan--Wang \cite[Theorem 1.5]{pan-wang}, $\pi_1(Y/ G)$ is the quotient of $G$ by a subgroup generated by torsion elements, but since $G$ is torsion-free such subgroup must be trivial and $\pi _1(Y/ G) = G$. 

Finally,  $Y/G$ equipped with $\mathcal{H}^N$ is a compact $\rcd (0, N)$ space so its fundamental group is virtually abelian, hence so is $\pi_1(X_i)$ for large $i$; a contradiction. 
\end{proof}

If $\sec ( X_i ) \geq -1 / i$, one could remove from Proposition \ref{pro:failure-finite-sheeted-covers}  the hypothesis that the groups $\pi_1(X_i)$ are torsion-free since one could use Perelman stability theorem \cite{kapovitch-perelman, perelman} to  deduce  that $X_i^{\prime} $ and $Y / G$ are homeomorphic for large $i$.   Example \ref{exa:heis} below illustrates the intuition behind Proposition \ref{pro:failure-finite-sheeted-covers}: in order to obtain finite-sheeted covers with controlled volume, one must ``unwind'' the manifold in all directions, which can lead to the diameter growing to infiniy if the groups are not abelian.

\begin{example}\label{exa:heis}
Consider the $3$-dimensional Heisenberg groups
\[
H^3(\R)=\left\{ \left. \begin{bmatrix}
    1 & x & z\\
    0 & 1 & y\\
    0 & 0 & 1
\end{bmatrix} \right| x,y,z \in \R \right \}, \quad H^3(\Z)=\left\{\left. \begin{bmatrix}
    1 & a & c\\
    0 & 1 & b\\
    0 & 0 & 1
\end{bmatrix}\right| a,b,c\in \Z \right\}.
\]
Let $\{ \vv _1, \, \vv _2, \, \vv _3 \} $ be the basis of the Lie algebra of $H^3(\R)$ given by
\[
\vv_1= \begin{bmatrix}
    0 & 1 & 0\\
    0 & 0 & 0\\
    0 & 0 & 0
\end{bmatrix} ,
\hspace{2cm}
\vv_2= \begin{bmatrix}
    0 & 0 & 0\\
    0 & 0 & 1\\
    0 & 0 & 0
\end{bmatrix} ,
\hspace{2cm}
\vv_3= \begin{bmatrix}
    0 & 0 & 1\\
    0 & 0 & 0\\
    0 & 0 & 0
\end{bmatrix}.
\]
For $ i  \in \mathbb{N}$, consider the left invariant Riemannian metric $g_i$ on $H^3 (\mathbb{R})$ for which
\begin{gather*}
    \Vert \vv _1 \Vert _{g_i} ^4  = \Vert \vv _2 \Vert _{g_i} ^4 = \Vert \vv _3 \Vert _{g_i}  = 1/ i ^4 , \hspace{1.7cm}     \langle \vv _{j_1} , \vv_{j_2} \rangle _{g_i} = 0 \text{ for } j_1 \neq j_2 ,  \hspace{0.5cm} 
\end{gather*}   
and let $X_i$ be the quotient of $(H^3(\mathbb{R}), g_i)$ by the action of $H^3(\Z)$. Then 
\[      \diam (X _i) \to 0 , \hspace{3cm}  |\sec ( X_i)  | \to 0 . \] 
Note that $\rank ( \pi _1 (X_i)) = \rank (H^3(\Z))=3$.   If one wants to construct finite-sheeted covering spaces $X_i ^{\prime } \to X_i $ with 
\[ \diam (X_i^{\prime}) \leq D , \hspace{3cm} \haus ^N (X_i^{\prime}) \geq \nu  \]
for some $D, \nu > 0 $, then one would need to unwind in all directions. That is, if $u_j : =  \exp (\vv_j) \in H^3(\Z )$ for $j \in \{ 1, 2, 3 \} $ and $X_i^{\prime} = ( H^3 (\R) , g_i ) / \Gamma_i$ for some subgroup $\Gamma_i \leq H^3(\Z )$, then there is a sequence $N_i \in \mathbb{N}$ going to infinity at least as fast as $i$ for which 
\[   u_1 ^{n_1} u_2 ^{n_2}u_3^{n_3}  \notin \Gamma _ i  \]
for all $n_1, n_2, n_3 \in \Z $ with $\vert n_1 \vert , \vert n_2 \vert  \leq N_i $, $ \vert n_3 \vert  \leq N_i ^ 4$. The projection  $\rho : H^3 (\Z ) \to  \Z^2$ given by
\[        u_1 \mapsto (1,0) ,  \hspace{1.5cm} u_2 \mapsto (0,1) , \hspace{1.5cm} u_3 \mapsto (0,0)           \]
sends $\Gamma _ i $ to a lattice $\Lambda_i $ generated by a basis $\{  \ww _{1,i} ,  \ww_{2,i} \}$. If we denote by $A_i$ the area of the parallelogram spanned by $ \ww _{1,i} $ and $\ww _{2,i}$, then  $A_i \geq N_i^4$. This follows from the fact that if $g_{j,i} \in \Gamma _i \cap \rho ^{-1} (\ww _{j,i})$, then $[ g_{1,i}, g_{2,i}] = u_3 ^{A_i} \in \Gamma _ i $. From here, it is not hard to show that  
\[ \diam ( \mathbb{R}^2 / \Lambda _i ) \geq  \sqrt{A_i } / 10 \geq N_i ^2 / 10  . \]  
Finally, notice that the map $\overline {\rho } : H^3 (\R ) \to \mathbb{R}^2$ given by 
\[    \overline{\rho} \left(    \begin{bmatrix}
    1 & x & z\\
    0 & 1 & y\\
    0 & 0 & 1
\end{bmatrix}   \right) : = (x,y)                          \]
induces an $i$-Lipschitz surjective map $X_i ^{\prime} \to \mathbb{R}^2 / \Lambda _i$. For $i$ large enough this implies
\[     \diam (X_i^{\prime})  \geq \diam (\mathbb{R} ^2 / \Lambda _i) / i   \geq  N_i ^2 / 10 i  \geq \sqrt{i}  \to \infty ,  \]
which is a contradiction.
\end{example}

\begin{remark}
    The missing piece to prove Conjecture \ref{con:bi-holder} is Lemma \ref{lem:local-to-global-approximation}. That is, to prove the conjecture it would be enough to show that if in addition to the conditions of such lemma, one has
    \begin{itemize}
        \item $X$ is a non-collapsed $\rcd (K,N)$ space.
        \item $Y$ is an almost flat nilpotent Lie group of dimension $N$.
        \item $f$ satisfies the estimate
        \[              (1 - \delta  ) \, d( x,y ) ^{1 + \delta } \leq d ( f (x), f (y)  ) \leq (1 + \delta ) \, d (x,y) \, \text{ for all } x,y \in B_{10} (p)      .  \]
        \item  $G_Y$ acts freely on $Y$. 
    \end{itemize}
    then the produced $\tilde{f}$ can be chosen to be locally bi-H\"older (and not just a homotopy equivalence).  Arguing as in \cite[Theorem 3.3]{kapovitch-mondino} one can construct a global homeomorphism $\tilde{f} : X \to Y$ that is locally bi-H\"older, but making this function equivariant seems to be a very interesting technical challenge. 
    
     In principle,  one would need to choose the bump functions $\rho _g$ more carefully, but this may not be enough as one may need to also modify the function $f$  as it is done in the smooth setting by Huang in \cite{huang}. Moreover, the exact same construction is possible as the groups we are dealing with are amenable, making the equivariant map constructed in \cite[Theorems 1.3, 1.8]{huang} the perfect candidate to be the desired map.
\end{remark}

\subsection{Collapsed $\rcd (K,N)$ spaces} For an $\rcd (K,N)$ space $(X,\mathsf{d},\mm )$ with $\diam (X) \leq \varepsilon (K,N)$, it follows from the proof of Theorem \ref{thm:max-rank-nilmanifold} that $\rank (\pi_1 (X))$ is at most the rectifiable dimension of $X$, which is an integer bounded above by $N$. One can ask whether the rigidity of Theorem \ref{thm:max-rank-nilmanifold} can be pushed to when $\rank (\pi_1(X))$ equals the rectifiable dimension of $X$. This is not possible due to \cite{zhou} (see also \cite{hupp-naber-wang}). 

\begin{theorem}[Zhou]
For each $n \geq 3$, $\varepsilon > 0 $, there is an $\rcd (-1, n + 2 )$ space $(X,\mathsf{d}, \mm )$ of rectifiable dimension equal to $n$ with
\[    \diam (X) \leq \varepsilon , \hspace{3cm}  \pi_1(X)= \Z^n,  \] 
but no point of $X$ admits an open neighborhood homeomorphic to $\mathbb{R}^n$. 
\end{theorem}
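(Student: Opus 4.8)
The plan is to produce $X$ by an explicit construction and then check the four required properties (the $\rcd$ bound, the rectifiable dimension, the fundamental group, and nowhere being a manifold) one at a time; the diameter bound and the curvature sign are cosmetic. First, rescaling a compact $\rcd(0,n+2)$ space by a factor $\lambda\le 1$ again gives an $\rcd(0,n+2)$ space, hence an $\rcd(-1,n+2)$ space, of diameter $\lambda\cdot\diam(X)$; since homeomorphism type and rectifiable dimension are scale invariant, it suffices to construct, for every $n\ge 3$, a compact $\rcd(0,n+2)$ space of rectifiable dimension $n$ with $\pi_1=\Z^n$ and no manifold points. I also record the elementary fact that the set of manifold points of a metrizable space (points with a neighbourhood homeomorphic to $\R^n$) is open, so that ``no manifold points'' is equivalent to ``the non-manifold locus is dense''.

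For the construction I would realize $X$ as a measured Gromov--Hausdorff limit of closed smooth $(n+2)$-manifolds $(M_i,g_i)$ with $\Ric(g_i)\ge-\delta_i\to 0$ and $\diam(g_i)\le D$ that collapse onto an $n$-dimensional limit; then $X$ is $\rcd(0,n+2)$ by stability of the $\rcd$ condition under mGH convergence (Section~\ref{sec:prelims}). To force $\pi_1(X)=\Z^n$ I would take $M_i$ of torus-bundle type, say $M_i=\mathbb T^n\times\mathbb T^2$, with metrics that are $\epsilon_i$-thin in the two $\mathbb T^2$-directions away from a finite grid $\{p_{i,1},\dots,p_{i,N_i}\}\subset\mathbb T^n$ that becomes denser and denser, and that near each grid point carry a fixed \emph{defect}: a rotationally symmetric modification with controlled positive curvature (so the lower Ricci bound persists) along which the collapsing $\mathbb T^2$-fibre degenerates in a topologically nontrivial way, so that the limit $X$ acquires at each limit point $o_k$ of the grids a non-Euclidean local structure while staying $n$-rectifiable and flat-Euclidean at scales much larger than the defect scale. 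An equivalent route is to build $X$ directly by iteratively gluing, at a dense sequence of points of $\mathbb T^n$ and at scales tending to $0$, scaled copies of one fixed pointed compact $\rcd(0,n+2)$ building block $(\mathcal M,o)$ that is $n$-rectifiable, simply connected, isometric to a flat Euclidean annulus near its outer boundary, but with $o$ a non-manifold point, using $\rcd$-gluing/stability to see that $X$ is $\rcd(0,n+2)$. Producing the defect, equivalently the block $\mathcal M$ --- a two-dimensional collapse whose limit is $n$-rectifiable, genuinely Euclidean at unit scale, yet topologically exotic at $o$ --- is the technical heart; here I would follow Zhou's argument, combining the Cheeger--Fukaya--Gromov collapse analysis with the $\rcd$ stability and structure theory of Section~\ref{sec:prelims}, a natural candidate being a local Seifert- or $SU(2)$-linking model whose orbit-space collapse creates a point with non-Euclidean local homology.

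With $X$ in hand the verifications are as follows. Rectifiable dimension $n$: off the countable (more generally $\haus^n$-null) set $S=\{o_k\}_k$ the space $X$ is locally a smooth flat $n$-manifold (or a regular part of $\mathcal M$), so $X$ is $n$-rectifiable, and $\haus^n$-a.e. point is regular with tangent $\R^n$ since $S$ is $\haus^n$-negligible; one must check here that the defects neither raise the dimension (no uncollapsed extra directions survive) nor lower it (the collapsing fibres stay $2$-dimensional off an $\haus^n$-null set). Fundamental group: the collapse of the $\mathbb T^2$-fibres yields, by the structure theory of collapse, a surjection $\pi_1(M_i)=\Z^{n+2}\to\pi_1(X)$ with kernel the fibre $\Z^2$, and the defects, being simply connected, do not alter this, so $\pi_1(X)=\Z^n$. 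No manifold points: the manifold locus of $X$ is open and disjoint from $S$, because by excision $H_*(X,X\setminus\{o_k\})\cong H_*(\mathcal M,\mathcal M\setminus\{o\})\not\cong H_*(\R^n,\R^n\setminus\{0\})$, so each $o_k$ is a non-manifold point; since $S$ is dense in $X$, the manifold locus is empty. A final rescaling makes $\diam(X)\le\varepsilon$ while keeping the space $\rcd(-1,n+2)$.

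The main obstacle is the local model: one must exhibit a single $\rcd(0,n+2)$ germ that is simultaneously of rectifiable dimension exactly $n$ (the two-dimensional collapse ``complete'' $\haus^n$-a.e.), flat-Euclidean at unit scale (so the surgery is well defined and neutral for $\pi_1$), simply connected, and provably a non-manifold at its centre via an invariant --- local homology, or the local fundamental group --- that survives the collapse. Engineering this topologically twisted two-dimensional collapse with a uniform lower Ricci bound, and controlling the dense family of defects so that the rectifiable dimension is neither raised nor lowered, is the delicate part; the normalization, the collapse computation of $\pi_1$, the excision argument, and the openness of the manifold locus are all routine.
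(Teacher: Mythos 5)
Your proposal takes essentially the same route as the paper: the paper's entire proof is the one-line remark that the result follows by performing the main construction of \cite{zhou} on a flat $n$-torus of diameter $\leq\varepsilon$, and you likewise defer the technical heart to Zhou's defect construction applied to a flat-torus base, with a cosmetic rescaling handling the diameter and curvature sign. Your elaboration (the $\mathbb T^n\times\mathbb T^2$ collapse heuristic, the iterated-gluing picture, and the checks of $\pi_1$, rectifiable dimension, and the non-manifold locus via local homology and excision) is a plausible but unverified reconstruction of what Zhou's construction actually does, so it neither strengthens nor conflicts with the paper's citation.
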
 
    \begin{proof} This follows by performing the main construction of \cite{zhou} to a flat torus of dimension $n$ and diameter $ \leq \varepsilon $.  \end{proof}

It is however possible that if the dimension gap is small enough, the topological rigidity still holds.

\begin{conjecture}
   For each $K \in \mathbb{R}$, $N \in \mathbb{N}$, there is $\varepsilon > 0 $ such that if $(X, \mathsf{d}, \mm )$ is an $\rcd (K, N + \varepsilon )$ space with $ \diam (X) \leq \varepsilon $ and $ \rank (\pi_1(X)) = N $, then $X$ is homeomorphic to an infranilmanifold.
\end{conjecture}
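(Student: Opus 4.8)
The plan would be to run the scheme of Theorem \ref{thm:max-rank-nilmanifold}, but to upgrade the collapsing analysis on the universal cover by a quantitative rigidity that uses the smallness of the dimension gap. Argue by contradiction: suppose $(X_i, \mathsf{d}_i, \mm_i)$ are $\rcd(K, N+\varepsilon_i)$ spaces with $\varepsilon_i \to 0$, $\diam(X_i) \to 0$, $\rank(\pi_1(X_i)) = N$, and none homeomorphic to an infranilmanifold. Since the Margulis lemma holds for $\rcd(K,N+1)$ spaces \cite{deng-santos-zamora-zhao}, exactly as in the proof of Theorem \ref{thm:max-rank-nilmanifold} (using the Breuillard--Green--Tao structure theory \cite{breuillard-green-tao} for the approximate groups generated by short loops) one produces a finite-index nilpotent subgroup $\Lambda_i \le \pi_1(X_i)$ with $[\pi_1(X_i):\Lambda_i] \le C(K,N)$ and $\rank(\Lambda_i) = N$. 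Moreover $\rank(\pi_1(X_i))$ is bounded above by the rectifiable dimension of $X_i$, which is an integer $\le N + \varepsilon_i < N+1$, so the rectifiable dimension of $X_i$ equals $N$. By the Borel conjecture for closed aspherical manifolds with virtually nilpotent fundamental group (Theorem \ref{thm:borel-nilpotent}), it then suffices to show that $X_i$ is a closed aspherical topological $N$-manifold for $i$ large.

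I would first treat asphericity, which should go through essentially as in Theorem \ref{thm:max-rank-nilmanifold}. Let $\hat X_i \to X_i$ be the finite cover associated to $\Lambda_i$ and $\tilde X_i$ its universal cover. Applying the equivariant Gromov--Hausdorff compactness of Theorem \ref{thm:fukaya-yamaguchi} to $(\tilde X_i, \tilde p_i, \Lambda_i)$ after a suitable rescaling, one obtains, along a subsequence, a pointed limit which — as in the maximal-rank picture, since the $N$ independent directions of $\Lambda_i$ and the rectifiable dimension both equal $N$ — is a simply connected nilpotent Lie group $G$ of dimension $N$ acting on itself by left translations, with $\Lambda_i$ converging to a fixed lattice $\Lambda_\infty \le G$; a Pan--Rong-type rigidity \cite[Theorem 2.17]{pan-rong} then identifies $\Lambda_i$ with $\Lambda_\infty$ for $i$ large. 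In particular $\hat X_i$ is homotopy equivalent to the nilmanifold $G/\Lambda_\infty$, so $X_i$ is aspherical; it remains only to prove that $X_i$ is a topological manifold.

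This manifold point is the main obstacle, and the reason the conjecture remains open: by \cite{zhou} it cannot follow from $\rcd(K, N+\varepsilon)$ and maximal rank alone, so the smallness of $\varepsilon$ must be used in an essential way. The target would be a uniform Reifenberg condition on $\tilde X_i$ — for every $x$ and every scale $r \le r_0$, the ball $B_r(x) \subset \tilde X_i$ is Gromov--Hausdorff $\delta_i$-close to a Euclidean $N$-ball with $\delta_i \to 0$. At scales comparable to, or larger than, the rescaled $\diam(X_i)$ this follows from convergence to the smooth space $G$; the delicate regime is that of very small scales, where it amounts to showing that all tangent cones of $X_i$ are isometric to $\mathbb{R}^N$. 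Here one would use that a tangent cone $C(Z)$ is an $\rcd(0, N+\varepsilon_i)$ metric cone of rectifiable dimension $\le N$, hence (normalizing $\mm = \haus^N$) has a scale-invariant $N$-dimensional volume ratio; feeding this into an almost-volume-rigidity for $\rcd(0, N+\varepsilon_i)$ cones, together with the characterization of metric cones through their cross-sections (so $Z$ is $\rcd(N-2-\varepsilon_i, N-1+\varepsilon_i)$ of dimension $\le N-1$), one wants to force $Z$ to be first close to, then — by rigidity, once $\varepsilon_i$ is small — equal to the round sphere $S^{N-1}$. Closing this $\varepsilon_i$-gap, presumably by an induction on $N$ in which the lower-dimensional case controls the cross-sections, is the heart of the difficulty and where genuinely new input is needed; one also has to carry out this analysis compatibly with the group action.

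Finally, granting the uniform Reifenberg condition, the Cheeger--Colding version of the Reifenberg theorem \cite[Appendix A]{cheeger-colding-i} — or the canonical homeomorphism of Honda--Peng \cite{honda-peng}, made $\Lambda_i$-equivariant as in Theorem \ref{thm:equivariant-torus} and the discussion following Lemma \ref{lem:local-to-global-approximation} — yields a $\Lambda_i$-equivariant bi-H\"older homeomorphism $\tilde X_i \to G$, so $\hat X_i$ is bi-H\"older homeomorphic to the nilmanifold $G/\Lambda_\infty$ and, in particular, $X_i$ is a closed aspherical topological $N$-manifold with virtually nilpotent fundamental group. Theorem \ref{thm:borel-nilpotent} then gives that $X_i$ is homeomorphic to an infranilmanifold, contradicting the choice of the sequence and proving the conjecture.
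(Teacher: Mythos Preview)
The statement is a \emph{conjecture} in the paper; there is no proof to compare against. Your proposal is accordingly not a proof but a strategic outline, and you yourself flag the decisive gap: establishing that $X_i$ is a topological manifold. That step is the whole content of the conjecture, and your sketch of an ``almost-volume-rigidity for $\rcd(0,N+\varepsilon_i)$ cones'' plus ``induction on $N$ through cross-sections'' is not an argument but a wish list --- no such rigidity is available in the literature, and Zhou's examples show precisely that rectifiable dimension $N$ inside $\rcd(K,N+\varepsilon)$ does not force Euclidean tangent cones without further input.

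Two more concrete issues in the parts you treat as routine. First, your asphericity paragraph is circular relative to the paper's scheme: in the proof of Theorem~\ref{thm:contractible}, contractibility of the universal cover is obtained via Lemma~\ref{lem:contractible}, which \emph{requires} the manifold structure as input; knowing only $\Lambda_i \cong \Lambda_\infty$ does not produce a homotopy equivalence $\hat X_i \simeq G/\Lambda_\infty$, so you cannot decouple asphericity from the manifold question the way you suggest. Second, your appeal to Pan--Rong \cite[Theorem~2.17]{pan-rong} needs a non-collapse hypothesis (limit group discrete, volume bounded below), but the spaces $X_i$ are $\rcd(K,N+\varepsilon_i)$ with rectifiable dimension $N$ --- exactly the collapsed regime where Zhou's counterexamples live --- so Theorem~\ref{thm:volume-continuity} and Corollary~\ref{cor:no-small-subgroups} (which underpin the non-collapsed arguments in the paper) are not available, and neither is Pan--Rong as stated.
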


\subsection{Fibrations} Theorem \ref{thm:max-rank-nilmanifold} is a step towards the following conjectures which would extend to $\rcd (K,N)$ spaces work of Naber--Zhang \cite{naber-zhang},   Huang--Kong--Rong--Xu \cite{huang-kong-rong-xu}, and Huang--Wang \cite{huang-wang}.

\begin{definition}
    Let $X$ be a geodesic space, $x \in X$, and $\varepsilon > 0$. The \emph{fibered fundamental group } $\Gamma_{\varepsilon} (x)$ is defined to be the image of the map  $     \pi_1 (B_{\varepsilon}(x)) \to \pi_1(B_1(x))          $ induced by the inclusion. 
\end{definition}

\begin{conjecture}\label{con:wang}
    There is $\varepsilon ( K, N ) > 0 $ such that the following holds. Let $X$ be a closed $m$-dimensional Riemannian manifold with 
    \[     \vert \sec (X) \vert \leq 1 ,\hspace{3cm} \inj (X) \geq 1  ,    \]
    and $(X_i, \mathsf{d}_i, \mm_i )$ a sequence of $\rcd (K,N)$ spaces converging in the Gromov--Hausdorff sense to $X$. Assume that for all $i$ large enough and any $x \in X_i$ one has 
    \[    \rank ( \Gamma _{\varepsilon} (x)) = N - m     .         \]
    Then for large enough $i$ there are continuous Gromov--Hausdorff approximations $X_i \to X$ that are locally trivial fibrations with infranilmanifold fiber.  
\end{conjecture}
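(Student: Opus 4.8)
\emph{Towards Conjecture~\ref{con:wang}.} The plan is to reduce the global fibration statement to a local structure result governed by the fibered fundamental groups $\Gamma_\varepsilon(x)$, and then to glue the local pieces over the base. First I would fix a fine cover of $X$ by balls $B_r(x_\alpha)$ with $r \ll \varepsilon$, chosen so that the bounds $|\sec(X)| \le 1$ and $\inj(X) \ge 1$ make each $B_{2r}(x_\alpha)$ uniformly $C^1$-close to a Euclidean ball. Over the preimage $U_{\alpha,i} \subset X_i$ of $B_r(x_\alpha)$ I would analyze the local geometry by unwrapping the collapsed directions: pass to the cover of $B_1(x)$ associated to the kernel of $\pi_1(B_\varepsilon(x)) \to \pi_1(B_1(x))$ and invoke the equivariant Gromov--Hausdorff precompactness of Theorem~\ref{thm:fukaya-yamaguchi} together with the Margulis lemma for $\rcd(K,N)$ spaces \cite{deng-santos-zamora-zhao}. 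Along a subsequence this produces an equivariant limit $(\widetilde U_\infty, G)$ with $\widetilde U_\infty / G$ isometric to a ball of $X$ and $G$ virtually nilpotent, and the hypothesis $\rank(\Gamma_\varepsilon(x)) = N - m$ forces the collapse to happen with maximal rank in the $N-m$ fiber directions; in particular it forces $X_i$ to be non-collapsed of rectifiable dimension $N$.

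Granting this, the local fiber over $x_\alpha$ is, for large $i$, a compact $\rcd(K', N-m)$ space of diameter $o(1)$ whose fundamental group has rank $N-m$, so Theorem~\ref{thm:max-rank-nilmanifold} identifies it with an infranilmanifold of dimension $N-m$. The local projection $U_{\alpha,i} \to B_r(x_\alpha)$ would be produced by an equivariant version of the canonical map of Honda--Peng, just as in Theorem~\ref{thm:equivariant-torus} but with $\mathbb{T}^N$ replaced by the nilpotent fiber; this is precisely the construction foreshadowed in the Remark after Proposition~\ref{pro:failure-finite-sheeted-covers}, where the amenability of the structure groups is used to average a local candidate map, cf.\ \cite{kapovitch-mondino, honda-peng, huang}.

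It then remains to glue the local fibrations into a single continuous Gromov--Hausdorff approximation $f_i : X_i \to X$ which is a locally trivial fibration. On an overlap $B_r(x_\alpha) \cap B_r(x_\beta)$ the two local projections differ by a fiberwise homeomorphism of the model fiber, and I would patch them by a center-of-mass, partition-of-unity averaging carried out in the base $X$ (where the smooth structure is available), in the spirit of Cheeger--Fukaya--Gromov collapsing theory \cite{cheeger-fukaya-gromov}. Local triviality of $f_i$ would then follow from an Ehresmann-type argument once $f_i$ is known to be proper with fibers of the prescribed topological type, using Perelman-type stability \cite{kapovitch-perelman, perelman} to control the fibers under the perturbations introduced by the gluing.

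The main obstacle, as already in Theorems~\ref{thm:max-rank-nilmanifold} and~\ref{thm:equivariant-torus}, is to make the local-to-global construction \emph{equivariant} and genuinely \emph{locally trivial} rather than merely fiberwise controlled: the patching must be performed coherently over all of $X$, with non-abelian nilpotent structure groups, and in a setting where the Ricci-flow and submanifold smoothings used in the smooth proofs of Naber--Zhang \cite{naber-zhang} and Huang--Kong--Rong--Xu \cite{huang-kong-rong-xu} are unavailable. A secondary difficulty is that the local identifications of fibers with infranilmanifolds must be made compatibly, so that one obtains an honest infranil-bundle (ideally with structure group inside $\mathrm{Aff}(F)$); this seems to require a uniform-in-$x$ version of the arguments behind Theorem~\ref{thm:max-rank-nilmanifold}.
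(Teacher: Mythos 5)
This statement is a \emph{conjecture} in the paper, not a theorem, so there is no argument of the authors' own to compare yours against; the remark following it records only that Wang subsequently proved Conjectures~\ref{con:bi-holder} and~\ref{con:wang} by constructing non-collapsing finite-sheeted covers, overcoming the very difficulty flagged in Proposition~\ref{pro:failure-finite-sheeted-covers} --- a route quite different from the local-to-global gluing of fibered pieces you outline.

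Even as a sketch, your proposal has a genuine gap. You assert that, after unwrapping, ``the local fiber over $x_\alpha$ is, for large $i$, a compact $\rcd(K', N-m)$ space of diameter $o(1)$'' and then invoke Theorem~\ref{thm:max-rank-nilmanifold} to identify it with an infranilmanifold. But a fiber of the conjectural fibration $X_i \to X$ is a subset of $X_i$, and its induced metric-measure structure is not an $\rcd(K',N-m)$ space: fibers do not inherit a lower Ricci (or $\rcd$) bound from the ambient space, and in general cannot, even in the smooth Cheeger--Fukaya--Gromov theory where the infranil fibers carry no curvature lower bound. Hence Theorem~\ref{thm:max-rank-nilmanifold} cannot be applied to the fiber directly. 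The rank hypothesis must instead be exploited through the local covers and the groups $\Gamma_\varepsilon(x)$ acting on them: one studies the equivariant limit of the unwrapped ball, runs the Breuillard--Green--Tao/Malcev machinery on the local deck groups as in the proof of Theorem~\ref{thm:contractible}, and then recovers the fiber topology by equivariant approximation arguments in the spirit of Lemma~\ref{lem:local-to-global-approximation}, not by restricting the $\rcd$ structure to a fiber. A smaller issue: your claim that the rank hypothesis forces $X_i$ to have rectifiable dimension $N$ is correct (it follows from a local version of the chain of inequalities \eqref{eq:rank-inequality-old}), but it is a key reduction and should be argued rather than asserted. Your assessment that the real obstacle is the equivariant, compatible, non-abelian gluing is accurate and is precisely why the authors leave this as a conjecture.
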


\begin{conjecture}
    Let $(X_i, \mathsf{d}_i, \mm_i)$ be a sequence of compact $\rcd(K,N)$ spaces that converges in the measured Gromov--Hausdorff sense to a closed $m$-dimensional Riemannian manifold $X$. Assume that for all $i$ one has
    \[  \bet (X_i) - \bet (X) = N - m . \]
    Then  for large enough $i$ there are continuous Gromov--Hausdorff approximations $X_i \to X$ that are also locally trivial fibrations with fiber $\mathbb{T}^{N - m}$.
\end{conjecture}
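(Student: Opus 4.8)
The plan is to reduce to a parametrized version of the equality case in Theorem \ref{thm:mmp} and then glue local fibrations into a global one using Theorem \ref{thm:equivariant-torus}. First I would set up the equivariant picture: pass to a subsequence so that, by a result of Fukaya--Yamaguchi type (Theorem \ref{thm:fukaya-yamaguchi}) applied to appropriate covers, the universal covers $\widetilde{X}_i$ of $X_i$ (or rather the covers associated to the kernels of $\pi_1(X_i) \to H_1(X_i;\Z)_{\mathrm{free}}$) converge equivariantly to a limit $(Y, \tilde p, G)$ with $Y/G = X$. The hypothesis $\bet(X_i) - \bet(X) = N - m$ forces, via the argument in \cite{mondello-mondino-perales} (which shows $\bet \le$ rectifiable dimension $\le N$ with rigidity), that the collapse is as large as possible in the ``torus directions'': the connected abelian part of $G$ has rank exactly $N-m$ and is in fact a torus $\mathbb{T}^{N-m}$, the limit $Y$ is a non-collapsed $\RCD(0,N)$-type space fibering over $X$ with torus fibers after quotienting, and the nilpotent structure theory of the limit group (Margulis lemma for $\RCD$, \cite{deng-santos-zamora-zhao}) degenerates to the abelian case because $\bet$ is maximal relative to $X$. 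I would make this precise by noting that for each $x \in X$ the fibered fundamental group $\Gamma_\varepsilon(x)$, after the Betti-number rigidity, must be virtually $\Z^{N-m}$ and in the non-collapsed limit becomes an honest free $\mathbb{T}^{N-m}$ action near each fiber.

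The second step is local: fix $x \in X$ and a small ball $B = B_r(x)$ with $r$ less than the injectivity radius and convexity radius of the limit $X$ (recall $X$ is a closed Riemannian $m$-manifold with two-sided sectional bounds). On the preimage $B_i = $ (the relevant cover restricted over) $B$, the group acting is an abelian group $G_i$ (a finite quotient of $\Z^{N-m}$ times possibly finite pieces, which the rigidity kills) with $\mathrm{diam}(B_i/G_i) \to 0$ relative to the $\mathbb{T}^{N-m}$ fiber scale; after rescaling the fiber to unit size, $B_i$ converges to $B \times \mathbb{T}^{N-m}$ with $G_i$ converging to $\mathbb{T}^{N-m}$ acting by translation on the second factor. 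Now I apply Theorem \ref{thm:equivariant-torus} fiberwise-in-family: over each small ball the hypotheses (measured GH-closeness to a torus, abelian isometric action with small quotient diameter) are met after rescaling, so for $i$ large there is a $G_i$-equivariant bi-H\"older homeomorphism from $B_i$ to $B \times \mathbb{T}^{N-m}$, hence a continuous GH approximation $B_i/(\text{residual group}) \to B$ which is a locally trivial $\mathbb{T}^{N-m}$-fibration over $B$ (equivariance gives local triviality of the associated fiber bundle). In the smooth case Theorem \ref{thm:equivariant-torus} upgrades $F$ to a diffeomorphism, giving a smooth fibration locally; the ``infranilmanifold fiber'' refinement of the first conjecture would instead require Theorem \ref{thm:max-rank-nilmanifold} in place of the torus statement, but for the $\mathbb{T}^{N-m}$-fiber conjecture the torus version suffices.

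The third step is globalization: cover $X$ by finitely many such balls $B^{(1)}, \dots, B^{(k)}$; over each we have a locally trivial $\mathbb{T}^{N-m}$-fibration $\pi_i^{(j)} : X_i|_{B^{(j)}} \to B^{(j)}$ coming from equivariant maps to $B^{(j)} \times \mathbb{T}^{N-m}$, all compatible with the same limiting torus action, so the transition maps on overlaps are (close to) elements of the isometry group of $\mathbb{T}^{N-m}$; using the equivariance and a partition-of-unity / center-of-mass averaging in the torus (which is possible since the torus is flat, hence has unique barycenters on small balls), I can glue the local maps $X_i|_{B^{(j)}} \to B^{(j)} \times \mathbb{T}^{N-m}$ into a single continuous map $X_i \to X$ that is a locally trivial fibration with fiber $\mathbb{T}^{N-m}$, and which is a Gromov--Hausdorff approximation because each local piece is bi-H\"older with constants tending to those of an isometry. \textbf{The main obstacle} I expect is precisely this gluing: making the locally-defined equivariant homeomorphisms agree on overlaps, since Theorem \ref{thm:equivariant-torus} produces a map well-defined only up to the torus action and up to the choice of eigenfunctions, so the local fibrations a priori differ by a cocycle valued in $\mathrm{Homeo}(\mathbb{T}^{N-m})$; one must check this cocycle can be trivialized to the structure group of a genuine $\mathbb{T}^{N-m}$-bundle, which is where one needs the flat geometry of the fibers and an argument that the transition functions are, for $i$ large, arbitrarily close to affine torus maps. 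A secondary difficulty is verifying uniformity of the $\delta$ in Theorem \ref{thm:equivariant-torus} across all balls $B^{(j)}$ and the rescalings, but this follows from compactness of $X$ and the fact that the family of rescaled balls lies in a precompact class of $\RCD$ spaces.
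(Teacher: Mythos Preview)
The statement you are trying to prove is not a theorem in the paper; it is explicitly stated there as a \emph{conjecture} with no proof (indeed, the remark following it only records that Wang later proved Conjectures \ref{con:bi-holder} and \ref{con:wang}, not this one). So there is no ``paper's own proof'' to compare against, and your proposal should be read as an attempted attack on an open problem rather than a reconstruction.

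That said, your outline has a genuine gap at the point you yourself flag as the main obstacle, but earlier than the gluing step. Theorem \ref{thm:equivariant-torus} is a global statement about a \emph{compact} $\rcd(K,N)$ space measured-Gromov--Hausdorff close to the full torus $\mathbb{T}^N$; its proof goes through the global Laplace spectrum and the eigenfunction embedding, neither of which localizes to a ball. Your step~2 asserts that over a small ball $B \subset X$ the preimage $B_i$, after ``rescaling the fiber to unit size'', converges to $B \times \mathbb{T}^{N-m}$ and that Theorem \ref{thm:equivariant-torus} then applies ``fiberwise-in-family''. But if you rescale so that the collapsing fiber becomes unit size, the base $B$ blows up to infinite diameter and $B_i$ is no longer compact nor close to a torus; if you do not rescale, $B_i$ is close to $B$, not to a torus. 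Either way the hypotheses of Theorem \ref{thm:equivariant-torus} are not met, and there is no parametrized version of that theorem available in the paper (producing one is essentially the content of the conjecture). The same issue obstructs the smooth analogue: the existing fibration theorems (Naber--Zhang, Huang--Kong--Rong--Xu, Huang--Wang) require substantial additional machinery beyond a single ``torus rigidity'' statement, and the $\rcd$ version of that machinery is precisely what is missing.

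A secondary gap is in step~1: from $\bet(X_i) - \bet(X) = N - m$ you would need to establish that the relevant cover is non-collapsed of dimension $N$ and that the limit group contains a $\mathbb{T}^{N-m}$ factor acting freely. The argument in \cite{mondello-mondino-perales} handles only the case $m=0$ (limit a point), and extending it to positive-dimensional base already requires controlling how the abelian cover sits over $X$, which is not immediate.
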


\begin{remark}
After the initial version of this paper appeared on arXiv, Wang \cite{wang-bounded-covering} proved both Conjecture \ref{con:bi-holder} and Conjecture \ref{con:wang} by building upon the techniques developed in this paper and in \cite{zamora-limits}. Moreover, while we pointed out in Proposition \ref{pro:failure-finite-sheeted-covers} that constructing non-collapsing finite-sheeted covers would not work in that setting, Wang found a way to overcome this difficulty and successfully implemented this technique to prove both conjectures.

\end{remark}

\section{Preliminary material}\label{sec:prelims}

\subsection{Notation}  For a metric space $X$, we denote its group of isometries by $\iso (X)$. For $p \in X$, $r > 0$, we denote the open ball of radius $r$ around $p$ by $B_r^X(p)$ or $B_r(p)$, depending on whether we want to emphasize the metric space used.

For $N \geq 0 $, we denote by $\mathcal{H}^N$ the $N$-dimensional Hausdorff measure.

For $N \in \mathbb{N}$, we denote by 
$ \mathbb{T}^N$ the torus 
\[\{ (x_1, y_1, \ldots , x_N, y_N) \in \mathbb{R}^{2N}  \, \vert  \,   x_j ^2  + y_j ^2 = 1 \text{ for each } j  \}\]
equipped with its standard flat metric and the $N$-dimensional Hausdorff measure.

$H_m (\cdot )$ denotes singular homology with integer coefficients. Singular simplices, singular chains, and the boundary operator $\partial$ are as defined in \cite[Chapter 2]{hatcher}.

For a vector space $V$ and a subset $S \subset V$, we denote by $\langle S \rangle \leq V$ the vector subspace generated by $S$. For $\vv \in V$, we also denote $\langle \{ \vv \} \rangle$ by $\langle \vv \rangle $.

\subsection{$\rcd (K,N)$ spaces}  Throughout this text, we assume the reader is familiar with the basic theory of $\rcd (K,N)$ spaces, including the definition of Sobolev space $\sobo$ and  Laplace operator $\Delta$ on (infinitesimally Hilbertian) metric measure spaces. We refer the reader to \cite{lott-villani, sturm-i, sturm-ii, ambrosio-gigli-savare, ambrosio-gigli-mondino-rajala} for the relevant definitions and to \cite{erbar-kuwada-sturm} for an overview of different equivalent definitions of $\rcd (K,N)$ spaces.

We note that a large number of papers in the literature work with a condition known as $\rcd ^{\ast} (K,N)$, originally  introduced in \cite{bacher-sturm}.  It is now known, thanks to the work of Cavalletti--Milman \cite{cavalletti-milman} and Li \cite{li}, that this condition is equivalent to the $\rcd (K,N)$ condition.

While the topology of an $\rcd (K,N)$ space $(X,\mathsf{d}, \mm )$ can be quite complicated (see for example \cite{hupp-naber-wang, zhou}), Wang  showed that it is semi-locally-simply-connected \cite{wang}. Moreover, by the locality of the $\rcd (K,N)$ condition \cite[Section 3]{erbar-kuwada-sturm}, lifting $\mm$ to the universal cover $\tilde{X}$ yields a measure $\tilde{\mm}$ that makes $\tilde{X}$ an $\rcd (K,N)$ space  (see \cite{mondino-wei}).

\subsection{Equivariant Gromov--Hausdorff convergence}
 Throughout this paper, we assume the reader is familiar with the notions of Gromov--Hausdorff and pointed Gromov--Hausdorff convergence, as presented for example in \cite{burago-burago-ivanov}. 

We refer to \cite{gromov-polynomial} for the definition of convergence of maps and to \cite{fukaya-yamaguchi} for the definition of equivariant Gromov--Hausdorff convergence. A feature of this frameword is that once one has pointed Gromov--Hausdorff convergence of proper spaces, compactness of groups of isometries comes for free \cite[Proposition 3.6]{fukaya-yamaguchi}.

\begin{theorem}[Fukaya--Yamaguchi]\label{thm:fukaya-yamaguchi}
Let $(X_i, p_i)$ be a sequence of pointed proper metric spaces that converges in the pointed Gromov--Hausdorff sense to a pointed proper space $(X,p)$, and $\Gamma_i\leq \iso (X_i)$ a sequence of groups of isometries. Then, after taking a subsequence, the triples $(X_i, \Gamma_i, p_i)$ converge to a triple $(X, \Gamma , p )$ with $\Gamma \leq \iso (X)$. 
\end{theorem}

Another feature of equivariant Gromov--Hausdorff convergence is its compatibility with quotients \cite[Theorem 2.1]{fukaya-convergence}.

\begin{theorem}[Fukaya]\label{thm:fukaya}
Let $(X_i, p_i)$ be a sequence of pointed proper metric spaces and $\Gamma_i\leq \iso (X_i)$ a sequence of groups of isometries. If the sequence of triples  $(X_i, \Gamma_i, p_i)$ converges to a triple $(X, \Gamma , p )$ in the equivariant Gromov--Hausdorff sense, then the seuquences of  pointed quotient spaces $(X_i / \Gamma_i , [p_i])$ converges in the pointed Gromov--Hausdorff sense to $(X/\Gamma, [p])$.
\end{theorem}

\subsection{Measured Gromov--Hausdorff convergence}\label{sec:mgh} 
One of the main features of the class of  $\rcd (K,N)$ spaces is the measured Gromov--Hausdorff compactness.    
 Throughout this paper, we use the extrinsic approach, introduced in \cite{gigli-mondino-savare} inspired on the original approach to pointed Gromov--Hausdorff convergence \cite{gromov-polynomial}, and widely adopted in the literature (see for example \cite{mondino-naber, ambrosio-honda}). We say a triple $(X,\mathsf{d}, \mm )$ is a \emph{metric measure space} if $(X, \mathsf{d})$ is a complete separable geodesic space and $\mm$ is a boundedly finite (finite on bounded sets) measure on $X$.

\begin{definition}
In a metric space $Z$, we say a sequence of boundedly finite measures $\mm_i$ \emph{converges weakly} to a measure $\mm$ if for any bounded continuous function $f: Z \to \mathbb{R}$ with bounded support, one has
\[      \int_{Z}   f \,  \text{d} \mm _i \to \int_Z   f \, \text{d} \mm    .                 \]
\end{definition}

\begin{definition}\label{def:mgh}
We say a sequence of compact metric measure spaces  $(X_i, \mathsf{d}_i, \mm _i  )$  converges in the \emph{measured Gromov--Hausdorff} sense to a compact metric measure space $(X,\mathsf{d}, \mm  )$ if there is a separable metric space $(Z , \dd _Z )$ and isometric embeddings $\iota _i :  X_i  \to Z $, $\iota : X \to  Z $ such that for every $\varepsilon > 0 $ there is $i_0 (\varepsilon ) \in \mathbb{N}$ such that for every $i \geq i_0$,  $x \in X_i $, $y \in X $, there are $y' \in X_i $ and $x' \in X $ with 
\[   \dd_Z (\iota _i (x) , \iota (x')  ) , \dd_Z(\iota_i (y' ) , \iota (y)) < \varepsilon    ,                        \]
and the sequence of measures $ (\iota _i )_{\ast} ( \mm _i )  $ converges weakly to the measure $ ( \iota ) _{\ast} (\mm )$. 
\end{definition}

\begin{definition} 
We say a sequence of pointed metric measure spaces  $(X_i, \mathsf{d}_i, \mm _i , p_i )$  converges in the \emph{pointed measured Gromov--Hausdorff} sense to a pointed metric measure space $(X,\mathsf{d}, \mm , p )$ if there is a separable metric space $(Z , \dd _Z )$ and isometric embeddings $\iota _i :  X_i  \to Z $, $\iota : X \to  Z $ such that for every $\varepsilon > 0 $ and $R > 0 $ there is $i_0 (\varepsilon , R ) \in \mathbb{N}$ such that for every $i \geq i_0$,  $x \in B_R (p_i)$, $y \in B_R (p)$, there are $y' \in B_{R } (p_i)$ and $x' \in B_{R} (p)$ with 
\[   \dd_Z (\iota _i (x) , \iota (x')  ) , \dd_Z(\iota_i (y' ) , \iota (y)) < \varepsilon    ,                        \]
and the sequence of measures $ (\iota _i )_{\ast} ( \mm _i )  $ converges weakly to the measure $ ( \iota ) _{\ast} (\mm )$. 
\end{definition}

\begin{theorem}\label{thm:compactness}
 For each $i \in \mathbb{N}$, let  $(X_i,\mathsf{d}_i, \mm _i,p_i)$ be an  $\rcd (K - \varepsilon_i ,N)$ space with $\varepsilon _i \to 0$. Then after normalizing the measures $\mm_i$, one can find a subsequence that converges in the pointed measured Gromov--Hausdorff sense to an $\rcd (K,N)$ space $(X,\mathsf{d}, \mm , p)$.
\end{theorem}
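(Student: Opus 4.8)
The plan is to deduce this from Gromov's precompactness theorem together with the stability of the synthetic curvature-dimension condition under measured Gromov--Hausdorff limits, following the classical arguments of Lott--Villani and Sturm.

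First I would normalize the measures: replacing each $\mm_i$ by $\mm_i / \mm_i(B_1^{X_i}(p_i))$ does not affect the $\rcd$ condition, so we may assume $\mm_i(B_1^{X_i}(p_i)) = 1$. Since $(X_i,\mathsf{d}_i,\mm_i)$ satisfies $\mathrm{CD}(K-\varepsilon_i,N)$ and $\varepsilon_i \to 0$, for $i$ large the Bishop--Gromov inequality provides a doubling estimate $\mm_i(B_{2r}^{X_i}(x)) \leq C(K,N,R)\,\mm_i(B_r^{X_i}(x))$ for all $x \in B_R^{X_i}(p_i)$ and $0 < r \leq R$, as well as a uniform lower bound $\mm_i(B_r^{X_i}(x)) \geq c(K,N,R,r) > 0$ for such $x$ and $r$. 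The doubling estimate yields a uniform packing bound on the balls $B_R^{X_i}(p_i)$, so Gromov's precompactness theorem lets me pass to a subsequence along which $(X_i,\mathsf{d}_i,p_i)$ converges in the pointed Gromov--Hausdorff sense to a complete, proper, geodesic pointed metric space $(X,\mathsf{d},p)$.

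Next I would produce the limit measure. Realizing the convergence inside an ambient metric space, the $\mm_i$ have uniformly bounded mass on every bounded set (again by doubling and the normalization), so by a Prokhorov-type argument together with a diagonal extraction I can pass to a further subsequence along which $\mm_i \to \mm$ weakly-$*$ in duality with compactly supported continuous functions, for some Radon measure $\mm$ on $X$; this is exactly pointed measured Gromov--Hausdorff convergence $(X_i,\mathsf{d}_i,\mm_i,p_i) \to (X,\mathsf{d},\mm,p)$. The bounds of the previous step pass to the limit, giving $\mm(B_r(x)) > 0$ for every $x \in X$ and $r>0$ and $\mm(B_R(x)) < \infty$, so $\mm$ is locally finite with $\supp \mm = X$, as required of an $\rcd$ space. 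Finally, by the lower semicontinuity of the Rényi entropy functionals along converging optimal transport plans --- that is, the stability of the $\mathrm{CD}$ condition (equivalently $\mathrm{CD}^*$, by \cite{cavalletti-milman, li}) under pointed measured Gromov--Hausdorff convergence \cite{lott-villani, sturm-i, sturm-ii, ambrosio-gigli-savare, ambrosio-gigli-mondino-rajala} --- the bounds $\mathrm{CD}(K-\varepsilon_i,N)$ with $\varepsilon_i \to 0$ yield $\mathrm{CD}(K,N)$ for $(X,\mathsf{d},\mm)$; and since infinitesimal Hilbertianity is also stable under such convergence, the limit is infinitesimally Hilbertian, hence an $\rcd(K,N)$ space.

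The genuinely delicate point is the last step: one has to check that the synthetic lower bound passes to the limit with the full constant $K$ (not some $K-\delta$), absorbing the $\varepsilon_i$-loss along the sequence, and that the weak-$*$ limit measure is neither trivial nor of deficient support. Both are controlled by the quantitative Bishop--Gromov estimates of the first step feeding into the lower-semicontinuity of the entropy; everything else is a routine application of Gromov's precompactness theorem and the weak-$*$ compactness of uniformly locally bounded measures.
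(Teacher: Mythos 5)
Your proof is correct and follows essentially the same route as the paper: Gromov's precompactness for the pointed metric structure, a Prokhorov/diagonal argument for the measures, and stability of the curvature-dimension condition under pointed measured Gromov--Hausdorff convergence. One small caution: the clause ``since infinitesimal Hilbertianity is also stable under such convergence'' is doing more work than its phrasing suggests --- this stability is nontrivial and is precisely the content of \cite{gigli-mondino-savare} (which also handles the variable constants $K-\varepsilon_i\to K$ in the non-compact setting), so that reference should be cited explicitly rather than folded in as an afterthought, as the paper itself does.
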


\begin{proof}
The fact that one can find a convergent subsequence follows from Gromov's pre-compactness criterion  \cite[Proposition 5.2]{gromov-ms} combined with Prokhorov's compactness theorem (see for example \cite[Theorem 27.32]{villani}). The fact that the limit space is an  $\rcd (K,N)$ space was proven in \cite{gigli-mondino-savare} building upon \cite{lott-villani, sturm-i, sturm-ii, ambrosio-gigli-savare}. 
\end{proof}

\subsection{Dimension and volume convergence} 

Let $(X,\mathsf{d} ,\mm)$ be an $\rcd (K,N)$ space. In \cite{brue-semola}, Bru\'e--Semola showed that there is a unique integer $n \leq N$ with the property that there is a set $\mathcal{R}_n \subset X$ of full measure such that for all $x\in \mathcal{R}_n$, the pointed Gromov--Hausdorff limit $\lim_{\lambda \to \infty }(\lambda X, x)$ exists and is isometric to $(\mathbb{R}^n, 0)$. The integer $n$ is called  the  \emph{rectifiable dimension} of $X$.

\begin{theorem} \label{thm:dim}
    Let $(X,\mathsf{d} ,\mm)$ be an $\rcd (K,N)$ space. Then the following are equivalent.
    \begin{enumerate}[(i)]
        \item $X$ has rectifiable dimension $N$. \label{thm:dim-1}
        \item  $X$ has topological dimension $N$. \label{thm:dim-3}
        \item $N \in \mathbb{N}$ and $X$ has Hausdorff dimension $N$. \label{thm:dim-2}
        \item $N \in \mathbb{N}$ and $\mm = c \mathcal{H}^N$ for some $c>0$.\label{thm:dim-4}
    \end{enumerate}
\end{theorem}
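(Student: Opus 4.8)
The plan is to establish the cycle of implications $(\ref{thm:dim-1}) \Rightarrow (\ref{thm:dim-3}) \Rightarrow (\ref{thm:dim-2}) \Rightarrow (\ref{thm:dim-4}) \Rightarrow (\ref{thm:dim-1})$, drawing on the structure theory of $\rcd(K,N)$ spaces. For $(\ref{thm:dim-1}) \Rightarrow (\ref{thm:dim-3})$: if the rectifiable dimension is $N$, then $\mm$-a.e.\ point has a Euclidean tangent $\mathbb{R}^N$, and by the rectifiability results of Bru\'e--Semola (together with Mondino--Naber, Kell--Mondino, De Philippis--Marchese--Rindler, Gigli--Pasqualetto) the space is $\mathcal{H}^N$-rectifiable and the reference measure is mutually absolutely continuous with $\mathcal{H}^N$ on the $N$-regular set. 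A bi-Lipschitz chart into $\mathbb{R}^N$ on a positive-measure subset forces the topological dimension to be at least $N$; on the other hand, the Hausdorff dimension of an $\rcd(K,N)$ space is always at most $N$ (Sturm, Lott--Villani), and topological dimension is bounded above by Hausdorff dimension for separable metric spaces, so topological dimension equals $N$.

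For $(\ref{thm:dim-3}) \Rightarrow (\ref{thm:dim-2})$: topological dimension $N$ forces $N \in \mathbb{N}$ (topological dimension is always an integer or $\infty$, and the general bound $\dim_{\mathrm{top}} \leq N$ rules out $\infty$). Since $\dim_{\mathrm{top}} X \leq \dim_{\mathcal{H}} X \leq N$ always holds, we get $\dim_{\mathcal{H}} X = N$. For $(\ref{thm:dim-2}) \Rightarrow (\ref{thm:dim-4})$: with $N \in \mathbb{N}$ and Hausdorff dimension $N$, one invokes the Bishop--Gromov inequality and the fine analysis of the regular set; the key input is that the rectifiable dimension $n$ satisfies $n \leq N$, and if $n < N$ then the Hausdorff dimension would be at most $n < N$ (a result essentially due to the fact that the $n$-regular set carries full measure and has Hausdorff dimension $n$, while the complement is $\mm$-null and its Hausdorff dimension is controlled), contradicting $\dim_{\mathcal{H}} X = N$. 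Hence $n = N$, the space is $N$-rectifiable, and the (essentially non-collapsed) structure theory — via the results identifying $\mm$ with a constant multiple of $\mathcal{H}^N$ exactly when the density is constant, which holds automatically when the rectifiable dimension equals $N$ by the work of De Philippis--Gigli and Honda — gives $\mm = c\,\mathcal{H}^N$.

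For $(\ref{thm:dim-4}) \Rightarrow (\ref{thm:dim-1})$: if $\mm = c\,\mathcal{H}^N$ with $N \in \mathbb{N}$, then by Bishop--Gromov the density $\lim_{r\to 0} \mm(B_r(x))/(\omega_N r^N)$ is well-defined, positive and finite $\mathcal{H}^N$-a.e., so the space is non-collapsed in the sense of De Philippis--Gigli; their structure theorem then shows a.e.\ tangent is $\mathbb{R}^N$, i.e.\ the rectifiable dimension is $N$.

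The main obstacle I expect is the implication $(\ref{thm:dim-2}) \Rightarrow (\ref{thm:dim-4})$, specifically ruling out that a "collapsed-looking" reference measure could coexist with maximal Hausdorff dimension: one needs the sharp statement that whenever the rectifiable dimension drops below $N$ the Hausdorff dimension drops too, and then the rigidity identifying $\mm$ with $c\,\mathcal{H}^N$ — both of which require carefully quoting the $\rcd$ structure theory rather than anything elementary. The remaining implications are comparatively soft, relying on standard dimension inequalities and the existence of bi-Lipschitz charts.
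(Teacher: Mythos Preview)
Your cycle breaks at the step $(\ref{thm:dim-1}) \Rightarrow (\ref{thm:dim-3})$. The assertion that ``a bi-Lipschitz chart into $\mathbb{R}^N$ on a positive-measure subset forces the topological dimension to be at least $N$'' is false: the Mondino--Naber charts are defined on \emph{Borel} sets, not open sets, and a Borel set of positive $\mathcal{H}^N$-measure can have topological dimension $0$ (think of a fat Cantor set in $\mathbb{R}$). What those charts do give you is Hausdorff dimension $\geq N$, i.e.\ condition $(\ref{thm:dim-2})$; they say nothing about covering dimension. To get a lower bound on topological dimension you need an \emph{open} set homeomorphic to a ball in $\mathbb{R}^N$, and producing such a set requires the Reifenberg-type topological regularity of Kapovitch--Mondino, which in turn uses that the space is non-collapsed, i.e.\ condition $(\ref{thm:dim-4})$.

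This is exactly how the paper proceeds: it does not attempt $(\ref{thm:dim-1}) \Rightarrow (\ref{thm:dim-3})$ directly, but instead routes through $(\ref{thm:dim-4})$ as a hub --- first $(\ref{thm:dim-1}) \Rightarrow (\ref{thm:dim-4})$ via Brena--Gigli--Honda--Zhu (the ``weakly non-collapsed implies strongly non-collapsed'' result, which you correctly invoke later for $(\ref{thm:dim-2}) \Rightarrow (\ref{thm:dim-4})$), and then $(\ref{thm:dim-4}) \Rightarrow (\ref{thm:dim-3})$ via Kapovitch--Mondino. Your remaining steps $(\ref{thm:dim-3}) \Rightarrow (\ref{thm:dim-2}) \Rightarrow (\ref{thm:dim-4}) \Rightarrow (\ref{thm:dim-1})$ match the paper's reasoning and citations; the fix is simply to replace your direct argument for $(\ref{thm:dim-1}) \Rightarrow (\ref{thm:dim-3})$ by the detour through $(\ref{thm:dim-4})$.
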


\begin{proof}
Since reference measures on $\rcd (K,N)$ spaces are non-zero and locally finite, \eqref{thm:dim-4} implies \eqref{thm:dim-2}.  By the work of De Philippis--Gigli \cite[Theorem 1.12]{de-philippis-gigli},  \eqref{thm:dim-4} implies  \eqref{thm:dim-1}. The fact that either  \eqref{thm:dim-1} or  \eqref{thm:dim-2} implies \eqref{thm:dim-4} was proven in \cite[Theorem 2.20]{brena-gigli-honda-zhu}. Since the topological dimension is always at most the Hausdorff dimension \cite[Theorem VII 2]{hurewicz-wallman}, and the Hausdorff dimension of an $\rcd (K,N) $ space is at most $N$, then \eqref{thm:dim-3} implies \eqref{thm:dim-2}. The fact that \eqref{thm:dim-4} implies \eqref{thm:dim-3} follows from the work of Kapovitch--Mondino \cite[Theorem 1.7]{kapovitch-mondino}.
\end{proof}

 An important  feature of  $\rcd (K,N)$ spaces of maximal dimension is the continuity of volume, originally proven by Colding in the smooth setting \cite{colding}.

\begin{theorem}\label{thm:volume-continuity}
    Let $(X_i, \mathsf{d}_i, \mm_i, p_i )$ be a sequence of pointed  $\rcd (K, N)$ spaces that converges in the pointed measured Gromov--Hausdorff sense to a pointed $\rcd (K,N)$ space $(X, \mathsf{d}, \mm ,  p)$ of rectifiable dimension $N$. Then
    \begin{enumerate}[(1)]
        \item  For $i$ large enough, $\mm_i$ is a constant multiple of  $\mathcal{H}^N$. \label{eq:non-collapsed}
        \item    $\lim_{i \to \infty }  \mathcal{H}^N( B_1^{X_i} (p_i) ) = \mathcal{H}^N ( B_1 ^X(p ) ).     $\label{eq:volume-continuity}
    \end{enumerate}
\end{theorem}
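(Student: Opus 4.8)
The plan is to reduce both statements to the rigidity criterion Theorem \ref{thm:dim}, together with a lower semicontinuity estimate for Hausdorff measure under measured Gromov--Hausdorff convergence. First I would observe that by Theorem \ref{thm:compactness} the limit is already an $\rcd(K,N)$ space, so the only new input is that it has rectifiable dimension $N$. Normalize so that $\mm_i(B_1^{X_i}(p_i)) = 1$ for every $i$, and let $\mm$ be the weak limit. For part (1): since $X$ has rectifiable dimension $N$, Theorem \ref{thm:dim} gives $\mm = c\,\mathcal{H}^N$ for some $c>0$, and in particular $N\in\mathbb{N}$ and $X$ has Hausdorff dimension $N$. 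Now suppose for contradiction that along a subsequence $\mm_i$ is \emph{not} a constant multiple of $\mathcal{H}^N$; by Theorem \ref{thm:dim} again this means each such $X_i$ has rectifiable (hence Hausdorff) dimension strictly less than $N$, i.e.\ at most some integer $k \le N-1$. The standard Bishop--Gromov / doubling machinery for $\rcd(K-\varepsilon_i,N)$ spaces then forces a uniform upper bound of the form $\mm_i(B_r^{X_i}(p_i)) \le C(K,N)\, r^{\,?}$ that is incompatible, in the limit, with the non-collapsing of $\mm$; more precisely one uses that the density $\limsup_{r\to 0}\mm_i(B_r(x))/r^N = +\infty$ at $\mm_i$-a.e.\ point when the rectifiable dimension is $<N$, while volume convergence of balls will eventually be available from step (2). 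I would actually prefer to sidestep this and argue directly: the limit measure $\mm$ is non-collapsed ($\mm = c\mathcal H^N$), and by \cite{de-philippis-gigli} the non-collapsed/collapsed dichotomy is stable under pmGH convergence with normalized measures — a collapsing sequence cannot converge to a non-collapsed limit. This immediately yields (1).

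For part (2), once we know $\mm_i = c_i \mathcal{H}^N$ for $i$ large and $\mm = c\,\mathcal{H}^N$, the content is that $\mm_i \to \mm$ weakly \emph{and} the total masses of $B_1(p_i)$ converge, i.e.\ there is no loss of mass at the boundary of the ball. Weak convergence gives $\liminf_i \mathcal{H}^N(B_1^{X_i}(p_i)) \ge \mathcal{H}^N(B_1^X(p))$ by taking cutoff functions supported in $B_1$ and $\limsup_i \mathcal{H}^N(B_1^{X_i}(p_i)) \le \mathcal{H}^N(\overline{B_1^X(p)})$ from functions supported slightly outside. So the equality reduces to showing $\mathcal{H}^N(\partial B_1^X(p)) = 0$, or more robustly that $r\mapsto \mathcal{H}^N(B_r^X(p))$ is continuous at $r=1$; but $\mathcal{H}^N = c^{-1}\mm$ and $\mm$ satisfies Bishop--Gromov volume monotonicity, so $r\mapsto \mm(B_r(p))$ is a monotone function comparable to an absolutely continuous one, hence continuous off a countable set, and one can reduce to $r$ outside that set and then pass $r\uparrow 1$ using an $\varepsilon$-$\delta$ argument. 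Alternatively, and more cleanly, I would invoke the known fact (essentially Colding's volume convergence as established in the $\rcd$ setting in \cite{de-philippis-gigli}, building on \cite{gigli-mondino-savare}) that for non-collapsed $\rcd(K,N)$ spaces the Hausdorff measures of metric balls converge along pmGH-convergent sequences; the normalization and the first part guarantee we are in exactly this non-collapsed regime.

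The main obstacle I expect is precisely the passage from weak convergence of measures to convergence of the mass of a fixed metric ball, i.e.\ controlling the boundary $\partial B_1^X(p)$ and the behaviour of balls $B_1^{X_i}(p_i)$ near their boundary. This is the usual subtlety in volume-convergence statements: weak convergence only gives the two one-sided inequalities above, with the upper bound involving the \emph{closed} ball. The resolution is the Bishop--Gromov inequality on the limit, which makes $\mathcal{H}^N(\partial B_1(p)) = 0$ for all but countably many radii, combined with a continuity-in-$r$ argument; one perturbs the radius to a good value, applies weak convergence there, and then lets the radius tend to $1$. I would present step (1) first (it is short given Theorem \ref{thm:dim} and the non-collapsing stability of \cite{de-philippis-gigli}), and then step (2), isolating the boundary-measure vanishing as the one genuine lemma to prove.
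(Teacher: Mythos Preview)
Your proposal is correct and, once you take your own ``sidesteps'', it lands on exactly the paper's proof: both parts are handled by citation, part~(1) via a dimension-stability result combined with Theorem~\ref{thm:dim}, and part~(2) via De Philippis--Gigli's volume convergence for non-collapsed spaces.

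Two remarks. For~(1), the paper invokes Kitabeppu's lower semi-continuity of the rectifiable dimension \cite[Theorem~1.5]{kitabeppu} rather than \cite{de-philippis-gigli}; that is the precise reference for ``a sequence of rectifiable dimension $<N$ cannot pmGH-converge to a space of rectifiable dimension $N$'', and your first contradiction attempt via Bishop--Gromov densities is indeed best abandoned. For~(2), your direct weak-convergence route has a genuine gap, not just an aesthetic defect: weak convergence is of $\mm_i = c_i\,\mathcal{H}^N$ to $\mm = c\,\mathcal{H}^N$, so the Portmanteau inequalities you write control $\mm_i(B_1)$ and $\mm_i(\overline{B_1})$, not $\mathcal{H}^N(B_1)$. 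Passing from one to the other is precisely the assertion $c_i \to c$, which \emph{is} the content of the volume-convergence theorem; the boundary-measure-zero lemma you isolate is correct but orthogonal to the real difficulty. So your ``cleaner alternative'' of citing \cite{de-philippis-gigli} is not optional---it is the proof.
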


\begin{proof}
    The first claim follows from the lower semi-continuity of the rectifiable dimension \cite[Theorem 1.5]{kitabeppu} combined with Theorem \ref{thm:dim}. The second claim follows from the work of De Philippis--Gigli on non-collapsed $\rcd (K,N)$ spaces \cite[Theorem 1.3]{de-philippis-gigli}.
\end{proof}

A consequence of Theorem \ref{thm:volume-continuity}   is that non-collapsing sequences of $\rcd (K,N)$ spaces admit no small groups of isometries.

\begin{definition}
    Let $(X_i, p_i)$ be a sequence of proper geodesic spaces. We say a sequence of groups $W_i \leq \iso (X_i) $ consists of \emph{small subgroups} if for each $R > 0 $ we have
    \[    \lim _{i \to \infty } \, \sup _{g \in W_i}\,  \sup _{x \in B_R(p_i)} d(gx,x) = 0 .         \]
    Equivalently, the groups $W_i$ are small if they converge to the trivial group in the equivariant Gromov--Hausdorff sense and 
    \[     \lim_{i \to \infty} \sup _{g \in W_i} d(gp_i, p_i) < \infty .          \] 
\end{definition}

\begin{corollary}\label{cor:no-small-subgroups}
    Let $(X_i, \mathsf{d}_i, \mm_i, p_i )$ be a sequence of pointed  $\rcd (K, N)$ spaces that converges in the pointed measured Gromov--Hausdorff sense to a pointed $\rcd (K,N)$ space $(X , \mathsf{d}, \mm  , p)$ of rectifiable dimension $N$. Then any sequence of small subgroups $W_i \leq \iso (X_i)$ is eventually trivial.
\end{corollary}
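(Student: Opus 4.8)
We argue by contradiction. If the corollary fails, then after passing to a subsequence $W_i \neq \{\Id\}$ for every $i$ (smallness passes to subsequences, being a limit condition). Since $X$ has rectifiable dimension $N$, the first part of Theorem \ref{thm:volume-continuity} lets us renormalize so that $\mathfrak{m}_i = \haus^N$ for all large $i$; its second part, whose statement and proof are insensitive to the radius, then gives $\haus^N(B_r^{X_i}(p_i)) \to \haus^N(B_r^X(p))$ for every fixed $r>0$. Fix once and for all a radius $r \in (1,2)$ with $\haus^N(\partial B_r^X(p)) = 0$; this is possible because $s \mapsto \haus^N(B_s^X(p))$ is monotone, hence continuous off a countable set.

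\emph{Reduction to finite groups.} Replace each $W_i$ by its closure in $\iso(X_i)$: since $X_i$ is proper this closure is compact (a group acting on a proper space with bounded orbits and compact point stabilizers), it acts by measure-preserving isometries, and it is still small; so we may assume $W_i$ is a compact (Lie) group. Suppose $\dim W_i \geq 1$ for infinitely many $i$. Along that subsequence the effective action of $W_i$ on $X_i$ has positive-dimensional orbits (a connected positive-dimensional group acting effectively cannot have only point orbits) and hence, by the principal orbit theorem for isometric actions, positive-dimensional principal orbits; thus the quotient $X_i/W_i$ — an $\rcd(K,N)$ space by the quotient theorem of Galaz-Garc\'{\i}a--Kell--Mondino--Sosa — has rectifiable dimension at most $N-1$. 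On the other hand, since $W_i$ is small the quotient maps $X_i \to X_i/W_i$ are, on each bounded set, $\varepsilon_i$-isometries with $\varepsilon_i \to 0$, so by Theorem \ref{thm:fukaya-yamaguchi} the spaces $X_i/W_i$, equipped with the pushforward of $\haus^N$ as reference measure, converge in the pointed measured Gromov--Hausdorff sense to $X$; this contradicts the first part of Theorem \ref{thm:volume-continuity}. Hence $W_i$ is finite and nontrivial for all large $i$.

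\emph{Volume comparison.} Write $\pi_i \colon X_i \to X_i/W_i$ for the quotient map and $\mathfrak{m}_i' := (\pi_i)_*\haus^N$ for the reference measure of the $\rcd(K,N)$ space $X_i/W_i$. As above, $X_i/W_i \to X$ in the pointed measured Gromov--Hausdorff sense with $\mathfrak{m}_i' \to \haus^N_X$ weakly, so Theorem \ref{thm:volume-continuity} applied to this sequence gives, for $i$ large, $\mathfrak{m}_i' = c_i\,\haus^N_{X_i/W_i}$ for some $c_i>0$ together with $\haus^N_{X_i/W_i}(B_r([p_i])) \to \haus^N(B_r^X(p)) =: v>0$. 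Weak convergence $\mathfrak{m}_i' \to \haus^N_X$ and $\haus^N(\partial B_r^X(p)) = 0$ also give $\mathfrak{m}_i'(B_r([p_i])) \to v$, whence $c_i \to 1$. But $\pi_i$ is a $|W_i|$-to-one local isometry away from the set $\bigcup_{\Id\neq g\in W_i}\mathrm{Fix}(g)$ of points with nontrivial stabilizer, and that set is $\haus^N$-negligible (see below); therefore $\mathfrak{m}_i' = |W_i|\,\haus^N_{X_i/W_i}$, that is $c_i = |W_i| \geq 2$, contradicting $c_i \to 1$.

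\emph{The main obstacle.} The one point that is not a formal consequence of Section \ref{sec:prelims} (together with the fact, used above, that quotients of $\rcd(K,N)$ spaces by compact isometric actions are $\rcd(K,N)$) is that $\mathrm{Fix}(g)$ is $\haus^N$-negligible whenever $g$ is a nontrivial isometry of a non-collapsed $\rcd(K,N)$ space. If $\haus^N(\mathrm{Fix}(g))>0$, pick $z\in\mathrm{Fix}(g)$ that is both regular and an $\haus^N$-density point of the closed set $\mathrm{Fix}(g)$. Blowing up at $z$, the rescaled spaces converge to $\R^N$ and $g$ induces an isometry $g_z$ of $\R^N$ fixing the origin; using the non-collapsed volume convergence and that $\mathrm{Fix}(g)$ is closed with density $1$ at $z$, the rescalings of $\mathrm{Fix}(g)$ converge, as closed sets, to a subset of $\mathrm{Fix}(g_z)$ that is conull in every ball, forcing $\mathrm{Fix}(g_z)=\R^N$ and $g_z=\Id$. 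One then concludes $g=\Id$ — and this is the delicate step — either by a rigidity statement for isometries of $\rcd(K,N)$ spaces near a regular point, or by observing that, by the $\haus^N$-a.e.\ uniqueness of geodesics in $\rcd(K,N)$ spaces, $\mathrm{Fix}(g)$ contains $\haus^N$-a.e.\ geodesic joining $\haus^N$-a.e.\ pair of its points and hence, being closed with positive measure, has nonempty interior, so that $g$ fixes an open set and is the identity because $X_i$ is connected.
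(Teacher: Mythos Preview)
Your proof follows the paper's strategy exactly: pass to a nontrivial finite subgroup, observe that the quotient is again a non-collapsed $\rcd(K,N)$ space converging to $X$, and derive a contradiction from Theorem~\ref{thm:volume-continuity}. The paper is considerably shorter in two places. First, it reduces to finite groups in one line: the closure $\overline{W_i}\leq\iso(X_i)$ is a nontrivial compact Lie group, hence contains a nontrivial finite subgroup $F_i$; your detour through positive-dimensional orbits and a principal-orbit theorem (not established in the paper for $\rcd$ spaces) is unnecessary. Second, the paper applies Theorem~\ref{thm:volume-continuity}\eqref{eq:volume-continuity} directly to both $X_i$ and $X_i/F_i$ and writes
\[
\mathcal{H}^N\bigl(B_1^X(p)\bigr)=\lim_i\mathcal{H}^N\bigl(B_1^{X_i/F_i}([p_i])\bigr)=\lim_i\tfrac{1}{|F_i|}\mathcal{H}^N\bigl(B_1^{X_i}(p_i)\bigr)\leq\tfrac12\,\mathcal{H}^N\bigl(B_1^X(p)\bigr),
\]
bypassing your weak-convergence argument and the choice of a radius with null sphere.

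You are right that the middle identity above amounts to $\mathcal{H}^N(\mathrm{Fix}(g))=0$ for each nontrivial $g\in F_i$, and the paper takes this for granted. Your own justification of that point is incomplete, though: neither the step ``$g$ fixes an open set in a connected $\rcd(K,N)$ space, hence $g=\Id$'' nor the step ``an a.e.\ geodesic-convex closed set of positive measure has nonempty interior'' is proved here or follows from the tools assembled in Section~\ref{sec:prelims}; both require further rigidity input for isometries of non-collapsed $\rcd$ spaces.
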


\begin{proof}
    Working by contradiction, after passing to a subsequence, we assume $W_i$ is non-trivial for each $i$. For $i$ large enough, the closure $\overline{W_i}$ is a non-trivial compact Lie group, so it contains a non-trivial finite group $F_i \leq \overline{W_i}$. By Theorem \ref{thm:volume-continuity}\eqref{eq:non-collapsed}, the groups $\iso (X_i )$ are measure preserving, so by the work of Galaz--Kell--Mondino--Sosa \cite[Theorem 1.1]{galaz-kell-mondino-sosa} the pushforward measure $\overline{\mm}_i$ makes $X_i / F_i $ an $\rcd (K,N)$ space.

    Since the groups $F_i$ are small, the sequence $(X_i/F_i , d_{X_i/F_i} , \overline{\mm}_i , [p_i] )$ also converges in the pointed measured Gromov--Hausdorff sense to $(X, \mathsf{d}, \mm , p)$. However, applying Theorem \ref{thm:volume-continuity}\eqref{eq:volume-continuity} twice we get
\[  \mathcal{H}^N (B_1^X(p)) =    \lim_{i \to \infty } \mathcal{H}^N (B_1^{X_i / F_i} ([p_i]))  =  \lim_{i \to \infty } \frac{1}{\vert F_i \vert  }  \mathcal{H}^N (B_1^{X_i} (p_i)) \leq \frac{1}{2}  \mathcal{H}^N (B_1^X(p)) ,     \]
which is a contradiction.
\end{proof}

\color{black}

\subsection{Topological stability} The following is a special case of  \cite[Theorem 2.9]{kapovitch-mondino}.

\begin{theorem}[Kapovitch--Mondino]\label{thm:reifenberg-weak}
{
Let $(X_i, \mathsf{d}_i, \mm_i,  p_i)$ be a sequence of pointed $\rcd ( - \frac{1}{i} ,N)$ spaces such that the sequence $(X_i, p_i)$ converges in the pointed Gromov--Hausdorff sense to $(\mathbb{R}^N, 0)$. Then there is a sequence of pointed Gromov--Hausdorff approximations $f_i : (X_i , p_i) \to ( \mathbb{R}^N , 0 ) $ such that for all $R > \delta > 0$, the restriction of $f_i$ to $B_R(p_i)$ is a homeomorphism with its image for $i$ large enough depending on $R,\delta$, and
        \[            B_{R - \delta }(0) \subset f_i (B_R(p_i)) .  \] 
}

\end{theorem}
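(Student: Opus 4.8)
The plan is to prove Theorem~\ref{thm:reifenberg-weak} by extracting it from the general topological stability machinery for $\rcd(K,N)$ spaces developed by Kapovitch--Mondino, specializing their \cite[Theorem 2.9]{kapovitch-mondino} (or the Cheeger--Colding--Reifenberg package it builds on) to the model limit $(\mathbb{R}^N,0)$. First I would observe that since $(X_i,p_i)\to(\mathbb{R}^N,0)$ with $\rcd(-\tfrac1i,N)$ bounds, the sequence is automatically non-collapsed: by Theorem~\ref{thm:volume-continuity}\eqref{eq:non-collapsed} the measures $\mm_i$ are constant multiples of $\mathcal H^N$ for $i$ large, and by Theorem~\ref{thm:volume-continuity}\eqref{eq:volume-continuity} we have volume convergence $\mathcal H^N(B_r^{X_i}(p_i))\to \mathcal H^N(B_r(0))=\omega_N r^N$ on every fixed ball. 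Thus every point $x\in B_R(p_i)$ has, for $i$ large (depending on $R$), almost-Euclidean volume ratio on definite-size balls, and the tangent cone at every limit point is $\mathbb{R}^N$ with no boundary.

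The core step is then to invoke the Reifenberg-type conclusion: in a non-collapsed $\rcd(K,N)$ space where balls of definite radius are Gromov--Hausdorff-close to Euclidean balls and carry near-maximal volume, one obtains bi-H\"older (in particular, topological) coordinate charts. Concretely, \cite[Theorem 2.9]{kapovitch-mondino} produces, on any ball $B_R(p_i)$ whose geometry is sufficiently close to that of $B_R^{\mathbb{R}^N}(0)$, a map to $\mathbb{R}^N$ that is a homeomorphism onto its image; the closeness hypothesis is met here for $i\geq i_0(R,\delta)$ because of the assumed pointed Gromov--Hausdorff convergence together with the volume convergence just noted. I would take $f_i$ to be (a representative of) the Gromov--Hausdorff approximation supplied by that theorem, normalized so that $f_i(p_i)=0$; on $B_R(p_i)$ it is a homeomorphism with its image. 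To get the inclusion $B_{R-\delta}(0)\subset f_i(B_R(p_i))$, I would use that $f_i$ is an $\varepsilon_i$-Gromov--Hausdorff approximation with $\varepsilon_i\to 0$: its image is $\varepsilon_i$-dense in $B_R(0)$, and combining $\varepsilon_i$-denseness with the fact that $f_i(B_R(p_i))$ is open in $B_R(0)$ (invariance of domain, since $f_i|_{B_R(p_i)}$ is a homeomorphism onto its image and $\mathbb{R}^N$ is a manifold) and closed-ish behaviour near the boundary, a standard connectedness/degree argument forces $B_{R-\delta}(0)\subset f_i(B_R(p_i))$ once $\varepsilon_i<\delta$. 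A clean way to package this last point is a topological-degree or winding argument: the map $f_i$ restricted to $\partial B_R(p_i)$ stays within $\varepsilon_i$ of $\partial B_R(0)$, so it has degree one onto $B_{R-\delta}(0)$, hence that ball lies in the image.

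The diagonal extraction over an exhaustion $R=\delta=1/k$, $k\to\infty$, then yields a single sequence $f_i$ working for every pair $R>\delta>0$ for $i$ large depending on $R,\delta$, which is exactly the stated conclusion. I expect the main obstacle to be the honest verification that the hypotheses of \cite[Theorem 2.9]{kapovitch-mondino} are genuinely satisfied uniformly on each fixed ball $B_R(p_i)$ — in particular that "close to $\mathbb{R}^N$ at scale $R$" in their sense (which typically requires near-maximal volume on \emph{all} sub-balls down to small scales, i.e. a Reifenberg-flatness condition, not merely one-scale Gromov--Hausdorff closeness) follows from the convergence to the global model $\mathbb{R}^N$; here it does, because the pointed limit is exactly Euclidean so tangent cones at all scales and all points of $B_R(0)$ are $\mathbb{R}^N$, and a contradiction/compactness argument upgrades this to a uniform multi-scale bound for $i$ large. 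The remaining steps (volume continuity, the degree argument for surjectivity onto $B_{R-\delta}(0)$, diagonalization) are routine given the results already quoted in the excerpt.
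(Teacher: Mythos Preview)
Your proposal is correct and aligns with the paper's treatment: the paper does not give a proof at all but simply states the theorem as ``a special case of \cite[Theorem 2.9]{kapovitch-mondino}'' and moves on. Your write-up is therefore more detailed than what the paper provides; the additional steps you outline (non-collapsing via Theorem~\ref{thm:volume-continuity}, the degree/invariance-of-domain argument for the inclusion $B_{R-\delta}(0)\subset f_i(B_R(p_i))$, and the diagonal extraction) are the natural unpacking of why the cited result specializes to this statement, but the paper itself absorbs all of this into the citation.
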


\begin{corollary}\label{cor:manifold-reifenberg}
     Let $(X_i, \mathsf{d}_i, \mm_i,  p_i)$ be a sequence of pointed $\rcd ( - \frac{1}{i} ,N)$ spaces such that the sequence $(X_i, p_i)$ converges in the pointed Gromov--Hausdorff sense to $(\mathbb{R}^N, 0)$. If there are groups $\Gamma _ i \leq \iso (X_i) $ with 
     \[ \limsup_{i \to \infty } \, \diam (X_i / \Gamma_i ) < \infty, \]
     then $X_i$ is a topological manifold of dimension $N$ for $i$ large enough.
\end{corollary}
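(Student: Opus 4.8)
The plan is to upgrade the local homeomorphism statement of Theorem \ref{thm:reifenberg-weak} from the individual points where Gromov--Hausdorff approximations exist to \emph{every} point of $X_i$, using the bound on $\diam(X_i/\Gamma_i)$ to guarantee that every point of $X_i$ looks locally like $(\mathbb{R}^N, 0)$ after rescaling. First I would observe that Theorem \ref{thm:reifenberg-weak} gives, for the basepoint $p_i$, an $f_i$ that restricts to a homeomorphism from a fixed-size ball onto an open subset of $\mathbb{R}^N$; so $p_i$ has a Euclidean neighborhood for $i$ large. The job is to show the same for an arbitrary point $q_i \in X_i$ (with $i$ large, uniformly). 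The key leverage is that if $q_i \in X_i$ is arbitrary, then because $\diam(X_i/\Gamma_i) \leq D$ for some uniform $D$, there is $\gamma_i \in \Gamma_i$ with $\dd_i(\gamma_i p_i, q_i) \leq D$. Since $\gamma_i$ is an isometry of $X_i$, a Euclidean neighborhood of $p_i$ is carried to a Euclidean neighborhood of $\gamma_i p_i$ of the same size; but $\gamma_i p_i$ is only within bounded distance $D$ of $q_i$, not equal to it, so this alone does not finish the argument.

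To bridge the bounded gap I would argue by contradiction and compactness. Suppose the conclusion fails: then, after passing to a subsequence, there are points $q_i \in X_i$ admitting no Euclidean neighborhood. Consider the rescaled pointed spaces $(X_i, \lambda_i \dd_i, \mm_i, q_i)$ where $\lambda_i \to \infty$ is chosen so that $\lambda_i \dd_i$ still has the diameter of $X_i$ going to $0$ slowly enough — more carefully, I would re-center the convergence $(X_i, p_i) \to (\mathbb{R}^N, 0)$ at $q_i$ instead. Since the original sequence converges to $(\mathbb{R}^N,0)$ and this limit is homogeneous, and since $q_i$ lies within bounded distance $D$ of $\gamma_i p_i$ (a point whose pointed limit is again isometric to $(\mathbb{R}^N,0)$ by homogeneity of $\mathbb{R}^N$ and the equivariance from Theorem \ref{thm:fukaya-yamaguchi} applied to $\Gamma_i$), a standard diagonal/pointed-limit argument shows $(X_i, \dd_i, q_i)$ also converges in the pointed Gromov--Hausdorff sense to $(\mathbb{R}^N, 0)$ (possibly after a harmless rescaling keeping curvature bounds of the form $-1/i$). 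Now Theorem \ref{thm:reifenberg-weak} applies with basepoint $q_i$ and produces a Euclidean neighborhood of $q_i$ for $i$ large, contradicting the choice of $q_i$. Hence for $i$ large every point of $X_i$ has a neighborhood homeomorphic to an open subset of $\mathbb{R}^N$, i.e.\ $X_i$ is a topological $N$-manifold.

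The main obstacle is the re-centering step: one must verify that the pointed Gromov--Hausdorff convergence to $(\mathbb{R}^N,0)$ persists when the basepoint is moved from $p_i$ to an arbitrary $q_i$ at bounded distance, and that the curvature normalization $-1/i$ is preserved under whatever rescaling is used so that Theorem \ref{thm:reifenberg-weak} still applies verbatim. This is where the hypothesis $\limsup_i \diam(X_i/\Gamma_i) < \infty$ is essential: it is precisely what makes the orbit $\Gamma_i p_i$ a $D$-net in $X_i$, so that every $q_i$ is within a controlled distance of a translate of the good basepoint, and the homogeneity of the limit $\mathbb{R}^N$ together with the equivariant convergence of $(X_i, \Gamma_i, p_i)$ then transports the "tangent cone is $\mathbb{R}^N$" property to $q_i$. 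Once that is in place, the local homeomorphism statement is immediate from Theorem \ref{thm:reifenberg-weak}, and the conclusion that $X_i$ is a topological manifold follows by definition.
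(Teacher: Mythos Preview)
Your argument is correct but takes a longer detour than necessary. You treat the ball on which Theorem~\ref{thm:reifenberg-weak} produces a homeomorphism as having some unspecified ``fixed size'' and then worry that the bounded gap $D$ between $q_i$ and $\gamma_i p_i$ might exceed it, which forces you into a contradiction argument with re-centering. The paper avoids all of this by observing that the radius $R$ in Theorem~\ref{thm:reifenberg-weak} is \emph{arbitrary}: choose $R > \limsup_i \diam(X_i/\Gamma_i)$. For $i$ large the whole ball $B_R(p_i)$ is homeomorphic to an open subset of $\mathbb{R}^N$, so every point of $B_R(p_i)$ already has a Euclidean neighborhood; and since $\diam(X_i/\Gamma_i) < R$, every $x \in X_i$ can be moved into $B_R(p_i)$ by some $g \in \Gamma_i$, whence $x$ inherits a Euclidean neighborhood via $g^{-1}$. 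Your re-centering step does work (no rescaling is needed: $(X_i, q_i)$ is pointed-isometric to $(X_i, \gamma_i^{-1} q_i)$, the latter basepoints stay in $\overline{B_D(p_i)}$, and any subsequential limit is $(\mathbb{R}^N, q)$ for some $q$, which is pointed-isometric to $(\mathbb{R}^N,0)$), so the proof is valid; but the direct choice of $R$ renders the subsequence extraction, the appeal to equivariant convergence, and the concern about curvature normalization all unnecessary.
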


\begin{proof}
  Let $R  > \limsup_i \diam (X_i / \Gamma _ i )$. By Theorem \ref{thm:reifenberg-weak}, for $i$ large enough each point in $B_{R}(p_i)$  has a neighborhood homeomorphic to $\mathbb{R}^N$. Since also for $i$ large enough and each $x \in X_i$ there is an isometry $g \in \Gamma_i$ with $gx \in B_{R}(p_i)$ the result follows. 
\end{proof}

\begin{corollary}\label{cor:locally-contractible-reifenberg}
     Let $(X_i, \mathsf{d}_i, \mm_i,  p_i)$ be a sequence of pointed $\rcd ( - \frac{1}{i} ,N)$ spaces such that the sequence $(X_i, p_i)$ converges in the pointed Gromov--Hausdorff sense to $(\mathbb{R}^N, 0)$. Then for any $\varepsilon$, the following holds for $i$ large enough:
    \begin{itemize}
        \item For all $r \in ( 0 , 1]$, $x \in B_{1}(p_i)$, the ball $B_r(x)$ is contractible in $B_{r + \varepsilon }(x)$. 
    \end{itemize}
\end{corollary}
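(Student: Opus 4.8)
The plan is to pull the straight-line contraction of a Euclidean ball back to $X_i$ through the almost-isometric homeomorphisms furnished by Theorem \ref{thm:reifenberg-weak}; the point is that the distortion of those homeomorphisms is governed by a single sequence $\eta_i \to 0$ that does not depend on the scale $r$ or on the center $x$, which is precisely what yields the uniformity asserted in the statement.

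Concretely, apply Theorem \ref{thm:reifenberg-weak} with $R = 10$ and $\delta = 1$ to obtain pointed Gromov--Hausdorff approximations $f_i : (X_i, \mathsf{d}_i, p_i) \to (\mathbb{R}^N, 0)$ such that, for $i$ large, $f_i$ restricts to a homeomorphism of $B_{10}(p_i)$ onto an open set $U_i \subseteq \mathbb{R}^N$ with $B_9(0) \subseteq U_i$. Let $\eta_i \to 0$ be a sequence with $\big|\, |f_i(a) - f_i(b)| - \mathsf{d}_i(a,b) \,\big| \le \eta_i$ for all $a, b \in B_{10}(p_i)$ and with $|f_i(p_i)| \le \eta_i$; such a sequence exists by the definition of a pointed Gromov--Hausdorff approximation. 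Fix $i$ large enough that, in addition, $2\eta_i < \varepsilon$.

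Given $r \in (0,1]$ and $x \in B_1(p_i)$: since $B_r(x) \subseteq B_2(p_i) \subseteq B_{10}(p_i)$, the map $f_i$ is a homeomorphism onto its image on $B_r(x)$; put $y := f_i(x)$, so that $|y| \le \mathsf{d}_i(x, p_i) + 2\eta_i < 2$. The Euclidean ball $B := \{\, w \in \mathbb{R}^N : |w - y| < r + \eta_i \,\}$ is contained in $B_5(0) \subseteq B_9(0) \subseteq U_i$, so $f_i^{-1}$ is defined and continuous on $B$, and $\mathsf{d}_i(f_i^{-1}(w), x) \le |w - y| + \eta_i$ for every $w \in B$. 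I would then define
\[
  G : B_r(x) \times [0,1] \longrightarrow X_i, \qquad G(z,t) := f_i^{-1}\!\big(\, (1-t)\, f_i(z) + t\, y \,\big).
\]
For $z \in B_r(x)$ one has $|f_i(z) - y| = |f_i(z) - f_i(x)| \le \mathsf{d}_i(z,x) + \eta_i < r + \eta_i$, so the entire segment from $f_i(z)$ to $y$ lies in the convex ball $B$; hence $G$ is well defined and continuous, with $G(\cdot, 0)$ the inclusion $B_r(x) \hookrightarrow X_i$ and $G(\cdot, 1) \equiv x$. Finally $\mathsf{d}_i(G(z,t), x) \le |(1-t) f_i(z) + t y - y| + \eta_i < (r + \eta_i) + \eta_i < r + \varepsilon$, so $G$ takes values in $B_{r + \varepsilon}(x)$ and exhibits the inclusion $B_r(x) \hookrightarrow B_{r + \varepsilon}(x)$ as null-homotopic. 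This is the claim.

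I do not foresee a real obstacle: the whole analytic content is absorbed into the Kapovitch--Mondino stability result quoted as Theorem \ref{thm:reifenberg-weak}, and what remains is the bookkeeping of radii --- first $r$, then $r + \eta_i$, then $r + 2\eta_i$ --- which has to be checked to close up both inside $U_i$, so that $f_i^{-1}$ can be applied all along the homotopy, and inside $B_{r+\varepsilon}(x)$, so that the homotopy never leaves the target ball. The one point worth stating explicitly is that every ``for $i$ large enough'' above depends only on $N$, on $\varepsilon$, and on the fixed choice $R = 10$, $\delta = 1$, and never on $r$ or $x$, so a single threshold works simultaneously for all scales $r \in (0,1]$ and all centers $x \in B_1(p_i)$.
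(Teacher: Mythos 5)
Your proof is correct and rests on the same key ingredient (Theorem \ref{thm:reifenberg-weak}) and the same core idea as the paper: pull back the straight-line Euclidean contraction through the Kapovitch--Mondino almost-isometry $f_i$. The only difference is that the paper obtains the uniformity over $r \in (0,1]$ and $x \in B_1(p_i)$ via a contradiction and subsequence extraction (letting $r_i \to r$, $f_i(x_i) \to x$, and then pulling back a nullhomotopy of a single Euclidean ball), whereas you argue directly by observing that one distortion parameter $\eta_i \to 0$ governs $f_i$ on all of $B_{10}(p_i)$; this is a touch more transparent about where the uniformity comes from, but it is not a genuinely different route.
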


\begin{proof}
    Let $f_i : X_i \to X$ be the Gromov--Hausdorff approximations given by Theorem \ref{thm:reifenberg-weak}. Assuming the result fails, after passing to a subsequence, there are  sequences $x_i \in B_1(p_i)$, $r_i \in ( 0 , 1 ]$ for which the ball $B_{r_i} (x_i)$ is not contractible in $B_{r_i + \varepsilon } (x_i)$. After passing further to a subsequence, we can assume $r_i \to r$ for some $r \in [ 0 , 1]$ and $f_i (x_i) \to x$ for some $x \in B_1(0)$. 
    
    Then for $i$ large enough, $B_{r + \varepsilon /4}(x_i)$ is not contractible in $B_{r + 3 \varepsilon /4} (x_i)$. However, this is false, as for $i$ large enough one has
    \[    f_i (B_{r+ \varepsilon /4 }( x_i)) \subset  B_{r+ \varepsilon / 2} (x) \subset  f_i (B_{r+ 3 \varepsilon /4 }( x_i))  ,   \]
    so we can pull back via $f_i$ a nullhomotopy of $B_{r + \varepsilon / 2} (x)$.
\end{proof}

\subsection{Analysis and Spectral Estimates}
Let $(X,\mathsf{d},  \mm ) $ be a compact $\rcd (K,N)$ space of diameter $\leq D$. The spectrum  of its Laplacian $\Delta$ is discrete and non-positive, and one has an orthogonal direct sum decomposition 
\[      \sobo (X) = \bigoplus_{\lambda \geq 0} \{ f \in \sobo (X) \vert \Delta f = - \lambda f  \}   .  \]
For an eigenfunction $f \in \sobo (X) $ with $\Delta f = - \lambda f $, one has
\[     \Vert f \Vert _{\sobo} ^2 = (1 + \lambda ) \Vert f \Vert _{L^2} ^2 .  \]
Hence by \cite[Proposition 7.1]{ambrosio-honda-portegies-tewodrose} and the discussion preceding it, there is $C(K,N, D) > 0 $ for which
 \begin{equation}\label{eq:eigenest}
   \hspace{0.6cm} \| f \|_{L^\infty} \leq C \lambda ^{(N-2)/4} \Vert f \Vert _{\sobo }  ,\hspace{1.2cm} \| \nabla  f  \|_{L^\infty} \leq C  \lambda ^{ N/4} \Vert f \Vert _{\sobo }.
 \end{equation}
 
The Laplacian spectrum is continuous with respect to measured Gromov--Hausdorff convergence. This continuity was originally conjectured for Riemannian manifolds by Fukaya \cite{fukaya}, proven in that context and more general ones by Cheeger--Colding \cite{cheeger-colding-iii}, and extended to $\rcd (K,N)$ spaces by Gigli--Mondino--Savare \cite{gigli-mondino-savare} (see also \cite{ambrosio-honda} for an extension in non-compact spaces).

\begin{definition}\label{def:weakly}
Let $(X_i, \mathsf{d}_i, \mm_i ) $ be a sequence of $\rcd (K,N)$ spaces converging in the measured Gromov--Hausdorff sense to an $\rcd (K,N)$ space $(X,\mathsf{d}, \mm )$, and  let $(Z, \dd_Z)$, $\iota _i : X_i \to Z$, $\iota : X \to Z $ be as in Definition \ref{def:mgh}.  Given a sequence $ f  _i \in \sobo (X_i )$, we say it \emph{converges in} $\sobo$  to $ f \in \sobo  (X)$ if the sequence of measures $ ( \iota _i )_{\ast} ( f_i \mm _i ) $ converges weakly to $(\iota)_{\ast} (f \mm )$ and 
\[ \lim_{i \to \infty } \Vert f _i \Vert _{\sobo } = \Vert f \Vert _{\sobo } . \]
We say $f_i$ \emph{converges uniformly} to $f$ if for any choice of points $x_i \in X_i$, $x \in X$ with $\iota _i (x_i)  \to  \iota (x) $, one has $f _i (x_i) \to f (x)$. 
\end{definition}
\color{black}

\begin{theorem}[Gigli--Mondino--Savar\'e]\label{thm:spectrum-continuity}
    Let $(X_i, \mathsf{d}_i, \mm _i) $ be a sequence of compact $\rcd (K,N)$ spaces that converges in the measured Gromov--Hausdorff sense to a compact $\rcd (K,N)$ space $(X,\mathsf{d},\mm )$. Then
    \begin{enumerate}[(1)]
        \item  For each sequence of eigenfunctions $f _i \in \sobo (X_i)$ with 
        \[  \Delta f_i = - \lambda _i f_i  ,\hspace{1.3cm}  \limsup_{i \to \infty } \lambda_i < \infty  , \hspace{1cm} \limsup_{i \to \infty } \Vert f _i\Vert _{\sobo} < \infty , \]
        there is an eigenfunction $f \in \sobo (X)$ with $\Delta f = - \lambda f $ such that after passing to a subsequence, $\lambda _i \to \lambda$ and $f_i$ converges to $f$ both uniformly and in  $\sobo$.\label{thm:spectrum-continuity-1}

        \item  For each eigenfunction $f \in \sobo (X)$ with $\Delta f = - \lambda f $, there is a sequence of eigenfunctions $f_i \in \sobo (X_i)$ with $\Delta f_i = - \lambda _i f_i$ such that  $\lambda_i \to \lambda$ and $f_i$ converges to $f $ both uniformly and in $\sobo $. \label{thm:spectrum-continuity-2}
    \end{enumerate}
\end{theorem}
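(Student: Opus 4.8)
My plan is to deduce both statements from a single analytic input: the Mosco convergence of the Cheeger energies $\Ch_i$ of the $X_i$ to the Cheeger energy $\Ch$ of $X$ along the measured Gromov--Hausdorff convergence, which I would quote from Gigli--Mondino--Savar\'e rather than reprove. Realizing all the spaces isometrically inside a common complete space $Z$, this says: if $u_i \in \sobo(X_i)$ converges strongly in $L^2$ to $u$, then $\Ch(u) \le \liminf_i \Ch_i(u_i)$; and every $u \in \sobo(X)$ admits $u_i \in \sobo(X_i)$ converging to $u$ strongly in $L^2$ with $\Ch_i(u_i) \to \Ch(u)$, hence strongly in $\sobo$. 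Mosco convergence is equivalent to strong resolvent convergence $\Delta_i \to \Delta$, and because the $X_i$ lie in a fixed compact space and obey a uniform doubling and Poincar\'e inequality (so the embeddings $\sobo \hookrightarrow L^2$ are uniformly compact), the convergence is in fact in the norm-resolvent sense; in particular the eigenvalues of $-\Delta_i$ converge to those of $-\Delta$ with multiplicity. I expect establishing this Mosco convergence --- the stability of Sobolev functions and minimal relaxed gradients along a sequence of metric measure spaces converging only in the measured Gromov--Hausdorff sense --- to be the one genuinely hard step; the rest is a routine combination of it with the a priori estimates \eqref{eq:eigenest} and Arzel\`a--Ascoli.

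For part (1), given $f_i$ with $\Delta_i f_i = -\lambda_i f_i$, $\sup_i \lambda_i < \infty$ and $\sup_i \|f_i\|_{\sobo} < \infty$, I would pass to a subsequence with $\lambda_i \to \lambda$; the identity $\|f_i\|_{\sobo}^2 = (1+\lambda_i)\|f_i\|_{L^2}^2$ keeps $\|f_i\|_{L^2}$ bounded, so \eqref{eq:eigenest} --- with its uniform constant $C(K,N,D)$, legitimate since $\diam (X_i) \le D$ for $i$ large --- bounds $\|f_i\|_{L^\infty}$ and $\|\nabla f_i\|_{L^\infty}$ uniformly and makes the $f_i$ equi-Lipschitz. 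Arzel\`a--Ascoli along the Gromov--Hausdorff approximations then yields a subsequence converging uniformly to a Lipschitz $f : X \to \R$; uniform convergence together with $\mm_i \rightharpoonup \mm$ and the $L^\infty$-bound upgrades this to strong $L^2$-convergence, so $\|f_i\|_{L^2} \to \|f\|_{L^2}$, and the liminf inequality gives $f \in \sobo(X)$. To see $\Delta f = -\lambda f$, I would test $\int \nabla f_i \cdot \nabla g_i \, \di \mm_i = \lambda_i \int f_i g_i \, \di \mm_i$ against a strong recovery sequence $g_i \to g$ for an arbitrary $g \in \sobo(X)$: the weak--strong stability of the Dirichlet forms under Mosco convergence gives $\int \nabla f_i \cdot \nabla g_i \, \di \mm_i \to \int \nabla f \cdot \nabla g \, \di \mm$, while the right-hand side tends to $\lambda \int f g \, \di \mm$, so $\mathcal{E}(f,g) = \lambda \int f g \, \di \mm$ for all $g$. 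Finally $\Ch_i(f_i) = \tfrac12 \lambda_i \|f_i\|_{L^2}^2 \to \tfrac12 \lambda \|f\|_{L^2}^2 = \Ch(f)$, which promotes the convergence to strong $\sobo$-convergence, i.e. $\|f_i\|_{\sobo} \to \|f\|_{\sobo}$; the case $f = 0$ is vacuous.

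For part (2), fix $f$ with $\Delta f = -\lambda f$ and $\|f\|_{L^2} = 1$. Since $\sigma(\Delta_X)$ is discrete I would choose $a < \lambda < b$ with $a, b \notin \sigma(\Delta_X)$ and $[a,b] \cap \sigma(\Delta_X) = \{\lambda\}$, and put $d = \dim \ker(\Delta_X + \lambda)$. Norm-resolvent convergence forces $\operatorname{rank} P_i^{[a,b]} = d$ for $i$ large (where $P_i^{[a,b]}$ is the spectral projection of $-\Delta_i$): the inequality $\operatorname{rank} P_i^{[a,b]} \ge d$ comes from projecting strong recovery sequences of a basis of $\ker(\Delta_X+\lambda)$, and the reverse inequality from part (1) applied to an orthonormal eigenbasis of $\operatorname{Ran} P_i^{[a,b]}$. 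For a strong recovery sequence $h_i \to f$ one then gets $P_i^{[a,b]} h_i \to P^{[a,b]} f = f$ strongly, and part (1) applied blockwise shows that, after a subsequence, the orthonormal eigenbasis $\psi_{i,1}, \dots, \psi_{i,d}$ of $\operatorname{Ran} P_i^{[a,b]}$ (with eigenvalues $\mu_{i,j} \in [a,b]$) converges uniformly and in $\sobo$ to an orthonormal basis $e_1, \dots, e_d$ of $\ker(\Delta_X+\lambda)$ with all $\mu_{i,j} \to \lambda$; writing $f = \sum_j c_j e_j$, the combinations $f_i := \sum_j c_j \psi_{i,j}$ converge to $f$ uniformly and in $\sobo$. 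A diagonal argument over shrinking windows $[a_k, b_k] \downarrow \{\lambda\}$, using the uniform $L^\infty$ and gradient bounds of \eqref{eq:eigenest} on each block to keep Arzel\`a--Ascoli applicable, then produces the desired sequence of eigenfunctions on the $X_i$ (genuine single-eigenvalue eigenfunctions when $\lambda$ is a simple eigenvalue of $\Delta_X$).
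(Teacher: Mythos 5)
The paper does not prove this statement; it is quoted as a result of Gigli--Mondino--Savar\'e, so there is no internal proof to compare against. Your framework --- Mosco convergence of the Cheeger energies along measured Gromov--Hausdorff convergence (quoted from GMS), upgraded to norm-resolvent convergence via the uniform compactness of $\sobo \hookrightarrow L^2$ coming from uniform doubling and Poincar\'e bounds, combined with \eqref{eq:eigenest} and Arzel\`a--Ascoli --- is precisely the standard route to this spectral stability, and it correctly delivers part~(1).

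The genuine problem is the endgame of part~(2). Your construction produces $f_i = \sum_j c_j \psi_{i,j}$, a linear combination of eigenfunctions of $\Delta_{X_i}$ whose eigenvalues $\mu_{i,j}$ cluster around $\lambda$; when $\lambda$ is a multiple eigenvalue of $\Delta_X$ and the approximating eigenvalues $\mu_{i,j}$ remain distinct, this $f_i$ is \emph{not} an eigenfunction of $\Delta_{X_i}$. Your parenthetical (``genuine single-eigenvalue eigenfunctions when $\lambda$ is simple'') acknowledges the limitation, but it cannot be removed by a finer diagonal argument: part~(2), read literally, fails for multiple eigenvalues. Take $X = \mathbb{T}^2$ and let $X_i$ be flat rectangular tori with side ratios $r_1(i) \neq r_2(i)$, both tending to $1$. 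The function $f = \sin\theta_1 + \sin\theta_2$ is an eigenfunction of $-\Delta_X$ with eigenvalue $1$, but every eigenfunction of $-\Delta_{X_i}$ with eigenvalue near $1$ depends on $\theta_1$ alone or on $\theta_2$ alone (the eigenvalue $1$ splits into $1/r_1(i)^2 \neq 1/r_2(i)^2$, each of multiplicity two), so no sequence of them can converge to $f$ in $L^2$. What you actually establish --- and what the paper needs, since the $\varphi_{j,i}$ in the proof of Theorem~\ref{thm:equivariant-torus} only have to lie in the spectral window $V_i$, not in a single eigenspace --- is approximation of $f$ by elements of the near-$\lambda$ spectral window of $\Delta_{X_i}$, i.e.\ by linear combinations of eigenfunctions whose eigenvalues all tend to $\lambda$. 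You should state that weaker but correct conclusion rather than the literal claim in part~(2).
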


\begin{corollary}\label{cor:spectrum-continuity}
 Let $(X_i, \mathsf{d}_i, \mm _i) $ be a sequence of compact $\rcd (K,N)$ spaces that converges in the measured Gromov--Hausdorff sense to a compact $\rcd (K,N)$ space $(X,\mathsf{d},\mm )$. Then the sequence of Laplace spectra of the spaces $X_i$ counted with multiplicities as measures on $\mathbb{R}$ converge weakly  to the Laplace spectrum of $X$.    
\end{corollary}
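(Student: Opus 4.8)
The plan is to deduce Corollary \ref{cor:spectrum-continuity} from the two-sided spectral convergence in Theorem \ref{thm:spectrum-continuity}. Write $0=\lambda_0(X_i)\le\lambda_1(X_i)\le\cdots$ and $0=\lambda_0(X)\le\lambda_1(X)\le\cdots$ for the eigenvalues listed with multiplicity, and let $\nu_i=\sum_{j\ge 0}\delta_{\lambda_j(X_i)}$ and $\nu=\sum_{j\ge 0}\delta_{\lambda_j(X)}$ be the corresponding spectral measures; these are boundedly finite on $\mathbb{R}$ since the spectrum of a compact $\rcd(K,N)$ space is discrete. The whole statement reduces to showing $\lambda_j(X_i)\to\lambda_j(X)$ for every fixed $j$: granting this, let $g$ be bounded and continuous with $\supp g\subset[-\Lambda,\Lambda]$, choose (by discreteness of the spectrum of $X$) some $\Lambda'>\Lambda$ that is not an eigenvalue of $X$, and put $m:=\#\{j:\lambda_j(X)\le\Lambda'\}<\infty$, so that $\lambda_m(X)>\Lambda'$. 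Since $\lambda_m(X_i)\to\lambda_m(X)>\Lambda$, for $i$ large all $\lambda_j(X_i)$ with $j\ge m$ exceed $\Lambda$, so $\int g\,d\nu_i=\sum_{j=0}^{m-1}g(\lambda_j(X_i))\to\sum_{j=0}^{m-1}g(\lambda_j(X))=\int g\,d\nu$ by continuity of $g$; this is exactly weak convergence in the sense of the definition preceding Theorem \ref{thm:spectrum-continuity}.

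First I would prove $\limsup_i\lambda_j(X_i)\le\lambda_j(X)$ using part \ref{thm:spectrum-continuity-2}. Fix an $L^2(X,\mm)$-orthonormal family $\phi_0,\dots,\phi_j$ with $\Delta\phi_k=-\lambda_k(X)\phi_k$, and let $\phi_k^i\in\sobo(X_i)$, $\Delta\phi_k^i=-\lambda_k^i\phi_k^i$, be the approximating eigenfunctions from part \ref{thm:spectrum-continuity-2}, with $\lambda_k^i\to\lambda_k(X)$ and $\phi_k^i\to\phi_k$ both uniformly and in $\sobo$. From the uniform convergence, the weak convergence $\mm_i\to\mm$, and the normalization $\|\phi_k^i\|_{L^2}^2=\|\phi_k^i\|_{\sobo}^2/(1+\lambda_k^i)$, one obtains $\int_{X_i}\phi_k^i\phi_l^i\,d\mm_i\to\int_X\phi_k\phi_l\,d\mm=\delta_{kl}$. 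Hence for $i$ large the $\phi_k^i$ are linearly independent and span a $(j+1)$-dimensional subspace $V_i\subset\sobo(X_i)$ on which the Rayleigh quotient (Cheeger energy over squared $L^2$-norm) is $\le\max_{k\le j}\lambda_k^i+o(1)=\lambda_j(X)+o(1)$ uniformly, because on $V_i$ the energy and $L^2$-norm are governed by $(j+1)\times(j+1)$ matrices converging respectively to $\operatorname{diag}(\lambda_0(X),\dots,\lambda_j(X))$ and the identity. The Courant--Fischer min--max characterization of $\lambda_j(X_i)$, valid since the Cheeger Dirichlet form is closed with discrete spectrum, then gives $\lambda_j(X_i)\le\lambda_j(X)+o(1)$.

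For the matching lower bound I would argue along subsequences. Fix $j$; along any subsequence, the bound just proved lets us pass to a further subsequence with $\lambda_k(X_i)\to\mu_k$ for $k=0,\dots,j$, where $\mu_0\le\cdots\le\mu_j$ and $\mu_k\le\lambda_k(X)<\infty$. Take $L^2(X_i,\mm_i)$-orthonormal eigenfunctions $\psi_0^i,\dots,\psi_j^i$ with $\Delta\psi_k^i=-\lambda_k(X_i)\psi_k^i$; their $\sobo$-norms are uniformly bounded, so by part \ref{thm:spectrum-continuity-1} and a diagonal extraction over the finitely many indices $k\le j$ we may assume $\psi_k^i\to\psi_k$ uniformly and in $\sobo$ with $\Delta\psi_k=-\mu_k\psi_k$. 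As above $\int_X\psi_k\psi_l\,d\mm=\delta_{kl}$, so $\psi_0,\dots,\psi_j$ are linearly independent eigenfunctions of $X$ with eigenvalues $\mu_k\le\mu_j$; therefore $\bigoplus_{\lambda\le\mu_j}\{f\in\sobo(X):\Delta f=-\lambda f\}$ has dimension $\ge j+1$, i.e. $\lambda_j(X)\le\mu_j$. Together with $\mu_j\le\lambda_j(X)$ this forces $\mu_j=\lambda_j(X)$, and since every subsequence has a further subsequence along which $\lambda_j(X_i)\to\lambda_j(X)$, the full sequence converges, completing the reduction and hence the proof.

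I expect the only genuinely delicate points to be: (i) extracting convergence of the $L^2$ inner products $\int\phi_k^i\phi_l^i\,d\mm_i$ (and hence preservation of $L^2$-orthonormality and linear independence) from the notion of $\sobo$-convergence in Theorem \ref{thm:spectrum-continuity}, which requires combining uniform convergence of the functions with the weak convergence of the reference measures and the eigenfunction normalization identity; and (ii) invoking the min--max principle in the $\rcd$ setting. Everything else — the diagonal extraction and the bookkeeping with eigenvalue counting functions and non-eigenvalue thresholds — is routine.
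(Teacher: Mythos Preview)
Your argument is correct. The paper, however, does not supply a proof at all: Corollary~\ref{cor:spectrum-continuity} is stated immediately after Theorem~\ref{thm:spectrum-continuity} and left without justification, since in the original source \cite{gigli-mondino-savare} the pointwise convergence $\lambda_j(X_i)\to\lambda_j(X)$ of the ordered eigenvalues is part of the statement of spectral convergence, from which the weak convergence of the spectral measures is immediate. What you have done is reconstruct that eigenvalue convergence from the eigenfunction-level statements (parts~\ref{thm:spectrum-continuity-1} and~\ref{thm:spectrum-continuity-2}) as the paper records them; your two-sided min--max argument is the standard route and the delicate points you flag (passing $L^2$ inner products to the limit via uniform convergence of eigenfunctions plus weak convergence of measures, and the availability of Courant--Fischer for the closed Cheeger form) are genuine but routine in this setting.
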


\subsection{Canonical homeomorphisms} Eigenfunctions provide means of constructing bi-H\"older homeomorphisms between suitable non-collapsed $\rcd (K,N)$ spaces.  Theorem \ref{thm:canonical-homeomorphism} below was obtained in  \cite[Theorem 1.2.2]{honda-peng}, inspired by the work of Wang--Zhao \cite{wang-zhao}, and built upon the tools developed by Cheeger--Jiang--Naber \cite{cheeger-jiang-naber}.

\begin{definition}
    Let $(X_i, \mathsf{d}_i, \mm_i)$ be a sequence of compact $\rcd (K,N)$ spaces. A sequence of maps $\Phi_i  : X_i \to \mathbb{R}^k$ is said to be \emph{equi-regular} if for any smooth function $f \in C^{\infty} ( \mathbb{R}^k ) $, the composition $f \circ \Phi_i : X_i \to \R $ is in the domain of the Laplacian  for all $i$ and 
    \[     \sup_{i } \Vert  \Delta (f \circ \Phi_i ) \Vert _{\infty} < \infty  .  \]
\end{definition}

\begin{theorem}[Honda--Peng]\label{thm:canonical-homeomorphism}
Let $(X_i, \mathsf{d}_i, \mm _i) $ be a sequence of compact $\rcd (K,N)$ spaces that converges in the measured Gromov--Hausdorff sense to a compact $\rcd (K,N)$ space $(X,\mathsf{d},\mm )$.  Assume $(X,\mathsf{d})$ is isometric to a smooth $N$-dimensional Riemannian manifold, and consider a smooth isometric embedding $\Phi : X \to \mathbb{R}^k$, together with an open neighborhood $U \subset \mathbb{R}^k$ and a $C^{1,1}$-Riemannian submersion $\pi : U \to \Phi (X)$ with $\pi \vert _{\Phi (X)}= \Id _{\Phi (X)}$.   Then if $\Phi _i = (\varphi _{1,i}, \ldots , \varphi _{k, i} ) : X_i \to \mathbb{R}^k$ is a sequence of equi-regular maps converging uniformly to $\Phi$ such that the family of functions  $\Delta \varphi_{j,i} $ is equi-Lipschitz, then the sequence of maps 
\[ F_i : = (\Phi^{-1} \circ \pi \circ \Phi_i ) : X_i \to X\] 
satisfies
\[  (1 - \varepsilon _i ) \, \mathsf{d}_i( x,y ) ^{1 + \varepsilon_i} \leq \mathsf{d}( F_i (x), F_i (y)  ) \leq (1 + \varepsilon_i) \, \mathsf{d}_i(x,y) \, \text{ for all } x,y \in X_i   \]
for some $\varepsilon _i \to 0$. Furthermore, if the spaces $X_i$ are smooth Riemannian manifolds, and $\Phi_i $ is smooth for each $i$, then $F_i$ is a diffeomorphism for $i$ large enough. 
\end{theorem}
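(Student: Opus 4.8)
The plan is to show that for $i$ large the map $F_i=\Phi^{-1}\circ\pi\circ\Phi_i$ is a homeomorphism obeying the stated two-sided estimate, by first controlling the $\Phi_i$ as ``almost isometric immersions'' and then upgrading the resulting averaged ($L^2$) control into a genuinely pointwise distance estimate via the quantitative splitting machinery of Cheeger--Jiang--Naber \cite{cheeger-jiang-naber}. First, the setup. Since $(X,\mathsf d)$ is a smooth $N$-manifold, Theorem \ref{thm:dim} gives $\mm=c\,\mathcal H^N$, and Theorem \ref{thm:volume-continuity} shows that for $i$ large $X_i$ is non-collapsed (so $\mm_i=c_i\mathcal H^N$ with $c_i$ bounded above and away from $0$) with $\mathcal H^N(B^{X_i}_r(x_i))\to\mathcal H^N(B^X_r(x))$ whenever $x_i\to x$; in particular $X_i$ is, at most points and scales, quantitatively close to $\mathbb R^N$. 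Since $\Phi_i\to\Phi$ uniformly and $\Phi(X)$ is a compact subset of the open set $U$, we have $\Phi_i(X_i)\subset U$ for $i$ large, so $F_i$ is well defined, and $\pi|_{\Phi(X)}=\mathrm{Id}$ makes the $F_i$ Gromov--Hausdorff approximations converging uniformly to $\mathrm{Id}_X$.

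Second, the a priori estimates and the convergence of the first fundamental form. From equi-regularity $\|\Delta\varphi_{j,i}\|_\infty\le C$, the uniform bound $\|\varphi_{j,i}\|_\infty\le C$, and interior gradient estimates for functions with bounded Laplacian on $\rcd(K,N)$ spaces, one gets $\|\nabla\varphi_{j,i}\|_\infty\le C$; integrating Bochner's inequality
\[
\tfrac12\,\Delta|\nabla\varphi_{j,i}|^2\ \ge\ |\mathrm{Hess}\,\varphi_{j,i}|^2+\langle\nabla\varphi_{j,i},\nabla\Delta\varphi_{j,i}\rangle+K|\nabla\varphi_{j,i}|^2
\]
over the compact $X_i$, and using that $\Delta\varphi_{j,i}$ is equi-Lipschitz (to bound $\nabla\Delta\varphi_{j,i}$), yields a uniform bound $\int_{X_i}|\mathrm{Hess}\,\varphi_{j,i}|^2\le C$. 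With uniformly bounded energies and uniform convergence, $\varphi_{j,i}\to\varphi_j:=\Phi_j$ in $W^{1,2}$ along the convergence (stability of Sobolev functions, in the spirit of Theorem \ref{thm:spectrum-continuity}), and the $L^2$-Hessian bounds together with the second-order calculus on $\rcd$ spaces promote this to $\langle\nabla\varphi_{j,i},\nabla\varphi_{l,i}\rangle\to\langle\nabla\varphi_j,\nabla\varphi_l\rangle$ in $L^2$. Because $\Phi$ is an \emph{isometric} embedding, its differential is fiberwise a linear isometry onto $T\Phi(X)$, i.e.\ $\sum_j\nabla\varphi_j\otimes\nabla\varphi_j$ is the identity endomorphism of $TX$; hence the symmetric matrices $A_i:=\big(\langle\nabla\varphi_{j,i},\nabla\varphi_{l,i}\rangle\big)_{j,l}$ converge in $L^2$ to the constant rank-$N$ orthogonal projection onto $T\Phi(X)$, and in particular $\mathrm{tr}\,A_i=\sum_j|\nabla\varphi_{j,i}|^2\to N$ in $L^2$.

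Third, from averaged to pointwise control. Fix $p\in X$, let $P_p:\mathbb R^k\to T_{\Phi(p)}\Phi(X)\cong\mathbb R^N$ be orthogonal projection, and put $u_{p,i}:=P_p\circ\Phi_i$. Using the previous step, on a small ball $B_r(x_i)$ with $x_i\to p$ the map $u_{p,i}$ is a $\delta$-splitting map in the sense of Cheeger--Jiang--Naber with $\delta=\delta(i,r)\to0$ (as $i\to\infty$ and then $r\to0$): its pairing matrix is $\delta$-close to the identity in the scale-invariant $L^2$ sense, and the scale-invariant $L^2$-norm of $\mathrm{Hess}\,u_{p,i}$ on $B_r$ is $O(r)\to0$ since the Hessian bound above is subcritical. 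By the transformation theorem and the associated $\varepsilon$-regularity theorem of Cheeger--Jiang--Naber --- here non-collapsing is essential --- the restriction of $u_{p,i}$ to a definite ball around $x_i$ is a bi-H\"older homeomorphism onto its image, with Hölder exponent and multiplicative constants tending to $1$. Since $\pi$ is a $C^{1,1}$ Riemannian submersion with $d\pi$ the identity along $\Phi(X)$, near $p$ the map $\pi\circ\Phi_i$ differs from $u_{p,i}$ (read through the chart $P_p$) only by an error controlled by $\mathrm{dist}(\Phi_i(\cdot),\Phi(X))$ and the $C^{1,1}$-modulus of $\pi$ --- both negligible --- so $\pi\circ\Phi_i$, hence $F_i=\Phi^{-1}\circ\pi\circ\Phi_i$ with $\Phi^{-1}$ an isometry of the intrinsic metrics, obeys a \emph{local} estimate $(1-\varepsilon_i)\,\mathsf d_i(x,y)^{1+\varepsilon_i}\le\mathsf d(F_i(x),F_i(y))\le(1+\varepsilon_i)\,\mathsf d_i(x,y)$ on small balls. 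A covering/length-space argument, together with the \emph{global} upper bound $\mathrm{Lip}(\Phi_i)\le1+\varepsilon_i$ --- obtained by combining an $L^\infty$ bound $\mathrm{tr}\,A_i\le N+\varepsilon_i$ (a mean-value inequality applied to the subsolution $\sum_j|\nabla\varphi_{j,i}|^2$) with the non-degeneracy of the eigenvalues of $A_i$, which forces every eigenvalue close to $1$ or to $0$ --- globalizes the estimate to all of $X_i$. The two-sided estimate then makes $F_i$ injective with Hölder inverse, and since $F_i$ is also a local homeomorphism its image is open and closed in the connected manifold $X$, so $F_i$ is a homeomorphism.

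Fourth, the smooth case and the main obstacle. If the $X_i$ are smooth Riemannian manifolds and each $\Phi_i$ is smooth, then $u_{p,i}$ and $\pi\circ\Phi_i$ are smooth and, by genuine interior Schauder estimates for the $\varphi_{j,i}$ combined with the equi-Lipschitz bound on $\Delta\varphi_{j,i}$, the differential $d(\pi\circ\Phi_i)$ is $\varepsilon_i$-close to an isometry in $C^0$ at small scales; hence $F_i$ is a smooth immersion between closed $N$-manifolds which is $C^0$-close to $\mathrm{Id}_X$, and therefore a diffeomorphism for $i$ large. I expect the crux to be the third step: converting the $L^2$-closeness of $A_i$ to a projection into honest pointwise bi-H\"older control of distances needs the full force of the quantitative-stratification / transformation-theorem machinery of Cheeger--Jiang--Naber and a careful, uniform-over-scales-and-base-points bookkeeping of errors, plus the check that composition with the $C^{1,1}$-submersion $\pi$ does not degrade these estimates. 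The global Lipschitz bound is comparatively soft, but even there one must extract an $L^\infty$ --- not merely $L^2$ --- bound on $\mathrm{tr}\,A_i$, which relies on a Bochner-type mean-value inequality.
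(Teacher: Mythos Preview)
The paper does not prove this theorem: it is quoted verbatim as \cite[Theorem 1.2.2]{honda-peng} and only remarks that the proof ``built upon the tools developed by Cheeger--Jiang--Naber \cite{cheeger-jiang-naber}''. So there is no in-paper proof to compare your proposal against.

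That said, your outline is broadly in line with the Honda--Peng strategy the paper alludes to: obtain uniform $L^2$-Hessian bounds via Bochner, show the pullback metric (your $A_i$) converges in $L^2$ to the projection onto $T\Phi(X)$, and then invoke the Cheeger--Jiang--Naber transformation/geometric-transformation machinery to upgrade the averaged control to pointwise bi-H\"older control of $\delta$-splitting maps. Your identification of the third step as the crux is correct; the actual Honda--Peng argument spends most of its length precisely there.

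A couple of points in your sketch would not go through as written and would need to be handled differently. First, the claimed global $L^\infty$ bound $\mathrm{tr}\,A_i\le N+\varepsilon_i$ via a ``mean-value inequality for the subsolution $\sum_j|\nabla\varphi_{j,i}|^2$'' is not immediate: Bochner gives a \emph{lower} bound on $\Delta|\nabla\varphi_{j,i}|^2$ involving $\langle\nabla\varphi_{j,i},\nabla\Delta\varphi_{j,i}\rangle$, and while the equi-Lipschitz hypothesis on $\Delta\varphi_{j,i}$ controls this term, extracting a sharp $L^\infty$ bound (with constant $N+\varepsilon_i$ rather than merely $C$) requires more than a generic mean-value inequality---one needs the $L^2$-closeness to $N$ together with a quantitative subharmonicity argument, which is indeed what Honda--Peng carry out. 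Second, your globalization ``covering/length-space argument'' for the lower H\"older bound is where most of the difficulty lies: the local bi-H\"older estimates at different basepoints and scales do not concatenate naively, and this is exactly where the full strength of the transformation theorem (controlling the matrices uniformly across scales, not just at a single scale) is used.
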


\begin{remark}\label{rem:equi-regular}
    Let $(X_i,\mathsf{d}_i ,\mm_i)$ be a sequence of compact $\rcd (K,N)$ spaces of uniformly bounded diameter and $\varphi _{j,i} \in \sobo (X_i)$ a family of eigenfunctions with $ \Delta \varphi_{j,i} =  - \lambda _{j,i} \varphi _{j,i} $ for $j \in \{ 1, \ldots , k \}$ and assume
    \[   \sup_{i,j} \, \lambda _{j,i} < \infty ,\hspace{3cm} \sup_{i,j} \,  \Vert \varphi_{j,i}  \Vert _2 < \infty .  \]
    Consider the map $\Phi_i = (\varphi_{1,i}, \ldots , \varphi_{k,i} ) : X_i \to \mathbb{R}^k $. Since for any smooth function $f \in C^{\infty} (\mathbb{R}^k)$, one has
    \[    \Delta (f \circ \Phi_i ) = \sum_{j = 1}^k  \left[ \frac{\partial f}{\partial x _j} \circ \Phi_i \right]  \Delta \varphi_{j,i} + \sum_{j,\ell = 1 }^k  \left[ \frac{\partial ^2 f}{ \partial x_j \partial x_{\ell} } \circ \Phi_i \right] \langle \nabla \varphi_{j,i} , \nabla \varphi_{\ell, i} \rangle  ,    \]
    then by  \eqref{eq:eigenest}, the maps $\Phi_i = (\varphi_{1,i}, \ldots, \varphi _{k,i} ) : X _ i \to \mathbb{R}^k$ are equi-regular and equi-Lipschitz. 
\end{remark}

\subsection{Group theory}\label{subsec:rank} Recall that a  finitely generated group $\Lambda$ is said to be \emph{polycyclic} if there is a sequence of subgroups
\[ 1 = \Lambda _0 \trianglelefteq  \cdots \trianglelefteq \Lambda _m = \Lambda    \]
with $\Lambda _ j / \Lambda _ {j-1} $ cyclic for each $j$. In such a case, $\rank (\Lambda )$ is defined to be the number of $j$'s for which the quotient $\Lambda _ j / \Lambda _{j-1}$ is isomorphic to $\Z$. By Schreier Theorem \cite{fraleigh} this number is well defined. The following properties come directly from the definition.
\begin{proposition} \label{pro:rank-properties-1}
If $\Lambda ^{\prime } \leq \Lambda $ is a finite index subgroup then $\rank (\Lambda ^{\prime } ) = \rank (\Lambda )$.
\end{proposition}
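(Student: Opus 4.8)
The plan is to reduce everything to a single fixed polycyclic series for $\Lambda$ and to show that intersecting it with $\Lambda'$ produces a polycyclic series for $\Lambda'$ with exactly the same number of $\mathbb{Z}$-factors; the well-definedness statement already quoted (Schreier's theorem) then yields $\rank(\Lambda') = \rank(\Lambda)$ no matter which series one uses to compute either rank.

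Concretely, fix a series $1 = \Lambda_0 \trianglelefteq \cdots \trianglelefteq \Lambda_m = \Lambda$ with $\Lambda_j/\Lambda_{j-1}$ cyclic for each $j$, and set $\Lambda_j' := \Lambda_j \cap \Lambda'$. First I would check the elementary facts that $\Lambda_{j-1}' \trianglelefteq \Lambda_j'$ and that the inclusion $\Lambda_j' \hookrightarrow \Lambda_j$ induces an injection $\Lambda_j'/\Lambda_{j-1}' \hookrightarrow \Lambda_j/\Lambda_{j-1}$; the latter holds because the kernel of the composite $\Lambda_j' \to \Lambda_j/\Lambda_{j-1}$ is $\Lambda_j' \cap \Lambda_{j-1} = \Lambda_{j-1}'$. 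In particular each quotient $\Lambda_j'/\Lambda_{j-1}'$ is isomorphic to a subgroup of a cyclic group, hence cyclic, so $1 = \Lambda_0' \trianglelefteq \cdots \trianglelefteq \Lambda_m' = \Lambda'$ is a polycyclic series; this simultaneously re-proves that $\Lambda'$ is polycyclic, so that $\rank(\Lambda')$ is defined.

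The key point is then that $\Lambda_j'/\Lambda_{j-1}'$ has \emph{finite index} in $\Lambda_j/\Lambda_{j-1}$: its index is at most $[\Lambda_j : \Lambda_j \cap \Lambda'] \leq [\Lambda : \Lambda'] < \infty$. Consequently $\Lambda_j'/\Lambda_{j-1}'$ is infinite if and only if $\Lambda_j/\Lambda_{j-1}$ is infinite, i.e. (for cyclic groups) $\Lambda_j'/\Lambda_{j-1}' \cong \mathbb{Z}$ if and only if $\Lambda_j/\Lambda_{j-1} \cong \mathbb{Z}$, since a finite-index subgroup of $\mathbb{Z}$ is again infinite cyclic while a subgroup of a finite group is finite. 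Hence the number of indices $j$ with $\Lambda_j'/\Lambda_{j-1}' \cong \mathbb{Z}$ equals the number with $\Lambda_j/\Lambda_{j-1} \cong \mathbb{Z}$, which is $\rank(\Lambda)$; by Schreier's theorem this common value is $\rank(\Lambda')$, completing the proof. There is no genuine obstacle here — the only point requiring a moment of care is the index bound together with the observation that passing to a finite-index subgroup neither destroys an infinite cyclic factor nor creates one, which is precisely the computation above.
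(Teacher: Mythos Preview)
Your argument is correct and is precisely the standard verification one would give; the paper itself offers no proof beyond the sentence ``The following properties come directly from the definition,'' so your proposal simply spells out what the authors leave implicit, and there is nothing to compare.
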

\begin{proposition} \label{pro:rank-properties-2}
For a short exact sequence 
\[ 1 \to K \to \Lambda \to \Gamma \to 1  \]
with $\Lambda$ a polycyclic group, we have
\[      \rank (\Lambda ) = \rank (K) + \rank (\Gamma ).  \]
\end{proposition}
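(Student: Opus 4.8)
The plan is to produce one explicit polycyclic series for $\Lambda$ by splicing a chosen polycyclic series of $K$ with the preimage under $\pi$ of a chosen polycyclic series of $\Gamma$, and then to read off $\rank(\Lambda)$ from this single series using the well-definedness recorded above. As a preliminary I would note that $K$ and $\Gamma$ are themselves polycyclic, so that $\rank(K)$ and $\rank(\Gamma)$ are defined: intersecting a polycyclic series of $\Lambda$ with a subgroup gives a subnormal series whose factors embed into the original (cyclic) factors and hence are cyclic, while the image of a polycyclic series under a surjection is a subnormal series with cyclic factors; thus subgroups and quotients of a polycyclic group are polycyclic.

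Next, fix polycyclic series
\[ 1 = K_0 \trianglelefteq K_1 \trianglelefteq \cdots \trianglelefteq K_s = K , \qquad 1 = \Gamma_0 \trianglelefteq \Gamma_1 \trianglelefteq \cdots \trianglelefteq \Gamma_t = \Gamma \]
with each $K_j/K_{j-1}$ and each $\Gamma_j/\Gamma_{j-1}$ cyclic. Write $\pi \colon \Lambda \to \Gamma$ for the quotient map and identify $K$ with $\ker \pi \trianglelefteq \Lambda$. Set $\Lambda_j \defeq \pi^{-1}(\Gamma_j)$ for $0 \le j \le t$, so that $\Lambda_0 = K$ and $\Lambda_t = \Lambda$. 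Since $\pi$ is onto, $\Gamma_{j-1} \trianglelefteq \Gamma_j$ forces $\Lambda_{j-1} \trianglelefteq \Lambda_j$, and the correspondence theorem identifies $\Lambda_j/\Lambda_{j-1} \cong \Gamma_j/\Gamma_{j-1}$, which is cyclic (the induced surjection $\Lambda_j/\Lambda_{j-1} \to \Gamma_j/\Gamma_{j-1}$ has trivial kernel because $\pi^{-1}(\Gamma_{j-1}) \cap \Lambda_j = \Lambda_{j-1}$). Since $K \trianglelefteq \Lambda$, also $K_s = K \trianglelefteq \Lambda_1$, so the concatenation
\[ 1 = K_0 \trianglelefteq \cdots \trianglelefteq K_s = K = \Lambda_0 \trianglelefteq \Lambda_1 \trianglelefteq \cdots \trianglelefteq \Lambda_t = \Lambda \]
is a polycyclic series of $\Lambda$. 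Exactly $\rank(K)$ of the factors $K_j/K_{j-1}$ and exactly $\rank(\Gamma)$ of the factors $\Lambda_j/\Lambda_{j-1} \cong \Gamma_j/\Gamma_{j-1}$ are isomorphic to $\Z$, so this series has $\rank(K) + \rank(\Gamma)$ factors isomorphic to $\Z$. By the Schreier refinement theorem invoked above, the number of $\Z$-factors is independent of the chosen polycyclic series, so it equals $\rank(\Lambda)$, which is the claim.

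I do not expect a genuine obstacle here: the statement is essentially formal once one knows that subgroups and quotients of polycyclic groups are polycyclic and that the count of $\Z$-factors is a series-independent invariant. The only points deserving (routine) care are checking that the splice is an honest subnormal series with cyclic factors — which reduces to the facts that $\pi^{-1}$ sends normal subgroups to normal subgroups, that $K = \pi^{-1}(\Gamma_0) \trianglelefteq \pi^{-1}(\Gamma_1) = \Lambda_1$, and the identification $\Lambda_j/\Lambda_{j-1} \cong \Gamma_j/\Gamma_{j-1}$ — after which everything is bookkeeping of $\Z$-factors.
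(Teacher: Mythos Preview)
Your proof is correct and is exactly the standard splicing argument that the paper has in mind: the paper gives no explicit proof, stating only that this proposition ``come[s] directly from the definition'' (together with the Schreier refinement theorem already invoked to make $\rank$ well defined). You have simply written out the details the paper omits.
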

For an arbitrary finitely generated group $G$,  $\rank (G)$ is defined as the infimum of $ \rank (\Lambda )$ among finite index polycyclic subgroups $\Lambda \leq G$. By Proposition \ref{pro:rank-properties-1}  if $G$ is polycyclic there is no conflict between the distinct definitions of $\rank ( G )$. 
 
\begin{lemma}\label{lem:b1-compute}
    Let $G$ be a group, $\Gamma $ a  polycyclic group, and $ \theta :  G \to \Gamma  $ a surjective morphism with finite kernel. Then $ \rank (G) =  \rank (\Gamma )$.  
\end{lemma}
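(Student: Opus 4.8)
The plan is to reduce everything to the two rank identities already established (Propositions \ref{pro:rank-properties-1} and \ref{pro:rank-properties-2}), once we know that $G$ is virtually polycyclic. First I would observe that $G$ is finitely generated: since $G/\ker\theta \cong \Gamma$ is finitely generated and $\ker\theta$ is finite, lifting a finite generating set of $\Gamma$ and adjoining the finitely many elements of $\ker\theta$ generates $G$, so $\rank(G)$ is meaningful. The key preliminary step is to produce a polycyclic subgroup of finite index in $G$. I would do this via the standard fact that the class of virtually polycyclic groups is closed under extensions: $\ker\theta$ is finite and $\Gamma$ is polycyclic, so $G$, being an extension of $\Gamma$ by $\ker\theta$, is virtually polycyclic. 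If one prefers an elementary argument, the conjugation action of $G$ on the finite group $\ker\theta$ has image in $\operatorname{Aut}(\ker\theta)$, so $C := C_G(\ker\theta)$ has finite index in $G$; inside $C$ the subgroup $C\cap\ker\theta$ is finite, abelian, and central, while $C/(C\cap\ker\theta)$ is isomorphic to a finite-index (hence polycyclic) subgroup of $\Gamma$; passing to the preimage in $C$ of a poly-$\Z$ subgroup of finite index in $C/(C\cap\ker\theta)$ and pulling back its poly-$\Z$ series yields a subnormal series of a finite-index subgroup of $G$ whose bottom term is $C\cap\ker\theta$ and all of whose successive quotients are infinite cyclic; since an extension of a polycyclic group by an infinite cyclic group is again polycyclic, climbing this series shows the corresponding finite-index subgroup of $G$ is polycyclic.

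Granting a finite-index polycyclic subgroup, I would then show that \emph{every} finite-index polycyclic subgroup $\Lambda \leq G$ has $\rank(\Lambda) = \rank(\Gamma)$, which gives $\rank(G) = \rank(\Gamma)$ upon taking the infimum over all such $\Lambda$ (a nonempty family by the previous step). Fix such a $\Lambda$. Its image $\theta(\Lambda)$ is a subgroup of $\Gamma$ with $[\Gamma : \theta(\Lambda)] \leq [G:\Lambda] < \infty$, so Proposition \ref{pro:rank-properties-1} gives $\rank(\theta(\Lambda)) = \rank(\Gamma)$. Restricting $\theta$ to $\Lambda$ yields the short exact sequence
\[ 1 \longrightarrow \Lambda\cap\ker\theta \longrightarrow \Lambda \longrightarrow \theta(\Lambda) \longrightarrow 1 , \]
whose kernel $\Lambda\cap\ker\theta$ is a subgroup of the finite group $\ker\theta$, hence finite and so polycyclic of rank $0$. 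Since $\Lambda$ is polycyclic, Proposition \ref{pro:rank-properties-2} applies and yields
\[ \rank(\Lambda) = \rank(\Lambda\cap\ker\theta) + \rank(\theta(\Lambda)) = 0 + \rank(\Gamma) = \rank(\Gamma) . \]
Taking the infimum over all finite-index polycyclic $\Lambda \leq G$ then completes the argument.

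The only step with genuine content is establishing that $G$ is virtually polycyclic; the remainder is routine bookkeeping with the two propositions. I expect that in the write-up this will be dispatched either by quoting the closure of the class of virtually polycyclic groups under extensions, or by the short direct argument sketched above, whichever fits the surrounding references most cleanly.
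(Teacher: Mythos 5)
Your approach is essentially the same as the paper's: pass to the centralizer $C_G(\Ker\theta)$, which has finite index because $\operatorname{Aut}(\Ker\theta)$ is finite, show it is polycyclic as an extension of a finite-index (hence polycyclic) subgroup of $\Gamma$ by a finite abelian central kernel, and then read off the rank from Propositions \ref{pro:rank-properties-1} and \ref{pro:rank-properties-2}. The only slip is the phrase that $\Lambda\cap\Ker\theta$ is ``finite and so polycyclic'': finiteness alone does not give polycyclicity ($A_5$ is a counterexample). The conclusion still holds for a different reason — $\Lambda\cap\Ker\theta$ is a subgroup of the polycyclic group $\Lambda$, hence polycyclic, and being finite it has rank $0$ — which is all that Proposition \ref{pro:rank-properties-2} requires. (The paper sidesteps this issue by working with the centralizer, inside which the kernel is central and therefore abelian, so ``finite abelian, hence polycyclic'' is correct there.) With that one-line fix, your write-up is sound; the extra generality of treating every finite-index polycyclic $\Lambda$ is harmless but not needed, since by Proposition \ref{pro:rank-properties-1} applied to intersections all such subgroups already have the same rank.
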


\begin{proof}
Let $\Lambda \leq G$ be the kernel of the adjoint action on $\Ker ( \theta ) \trianglelefteq G $. Then there is a short exact sequence
\[  1 \to \Ker (\theta ) \cap \Lambda \to \Lambda \to \theta (\Lambda) \to 1.   \]
$\Ker (\theta) \cap \Lambda $ is finite abelian, hence polycyclic. Since $\Ker (\theta) \cap \Lambda$ and $\theta (\Lambda )$ are polycyclic,  so is $\Lambda$. Then by Propositions \ref{pro:rank-properties-1} and \ref{pro:rank-properties-2};
    \[   \rank (G) = \rank (\Lambda ) = \rank (\theta (\Lambda) ) = \rank (\Gamma ) .                   \]
\end{proof}

The ensuing lemma follows from classic simultaneous diagonalization.

\begin{lemma}\label{lem:simultaneous-diagonalization}
    Let $G$ be an abelian group acting by linear isometries on a finite dimensional real vector space $V$ of dimension $2N$. Assume $V = \oplus _{\ell = 1}^m E_{\ell} $ for some $G$-invariant subspaces $E_{\ell}$. Then  $V$ admits a (not necessarily unique!) decomposition  $   V = \oplus _{j = 1 }^N W_j  $ with each $W_j$ 2-dimensional, $G$-invariant, and satisfying
    \[   W_j = \bigoplus _{\ell = 1 }^m \, ( E_{\ell } \cap W_j ) .  \]
\end{lemma}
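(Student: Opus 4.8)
The plan is to reduce the problem to a statement about irreducible real representations of the abelian group $G$ and then reassemble a basis respecting both the ambient decomposition and the desired two-dimensional pieces. First I would pass to the closure: replacing $G$ by the closure $\overline{G}$ of its image in $O(V)$, which is a compact abelian Lie group, does not change the invariant subspaces involved, so we may assume $G$ is compact abelian and acts continuously. Then each $G$-invariant subspace decomposes as an orthogonal (with respect to any fixed $G$-invariant inner product, obtained by averaging) direct sum of $G$-irreducible subspaces, and since $G$ is abelian these irreducibles are either $1$-dimensional (the action is $\pm 1$, i.e.\ trivial or the sign character) or $2$-dimensional (the action is by rotations through a fixed nontrivial character $\chi: G \to SO(2)$). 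The key bookkeeping device is to group the irreducible constituents by isomorphism type of character.

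Next I would apply this decomposition separately inside each $E_\ell$, writing $E_\ell = \bigoplus_{\alpha} E_{\ell,\alpha}$ where $\alpha$ runs over isomorphism classes of real irreducible $G$-representations and $E_{\ell,\alpha}$ is the $\alpha$-isotypic part of $E_\ell$. Summing over $\ell$ gives $V_\alpha := \bigoplus_\ell E_{\ell,\alpha}$, and $V = \bigoplus_\alpha V_\alpha$ with each $V_\alpha$ both $G$-invariant and compatible with the $E_\ell$-decomposition in the sense that $V_\alpha = \bigoplus_\ell (E_\ell \cap V_\alpha)$ — here one uses that the isotypic decomposition is canonical, so $E_\ell \cap V_\alpha = E_{\ell,\alpha}$. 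It therefore suffices to produce, within each $V_\alpha$ separately, a decomposition into $2$-dimensional $G$-invariant subspaces each of which is spanned by its intersections with the $E_\ell$'s; concatenating these over all $\alpha$ yields the desired global decomposition, and the fact that $\dim V = 2N$ guarantees the total count is $N$ provided each piece is genuinely $2$-dimensional.

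The remaining work is the single-isotype case, and this is where the only real content lies. If $\alpha$ is a $2$-dimensional character type, then $V_\alpha$ is a direct sum of copies of the standard $SO(2)$-representation twisted by $\chi$; choosing in each $E_{\ell,\alpha}$ an orthonormal basis adapted to the complex structure $J$ induced by $\chi$, one lists the complex basis vectors coming from all the $E_\ell$'s together and takes the $W_j$'s to be the real $2$-planes they span — each such plane lies inside a single $E_\ell$, hence trivially equals its intersection with that $E_\ell$ (the other intersections being zero). If $\alpha$ is a $1$-dimensional character type (trivial or sign), then $V_\alpha = \bigoplus_\ell (E_\ell \cap V_\alpha)$ is a sum of lines on which $G$ acts by a fixed scalar $\pm 1$; pairing up these lines arbitrarily — two at a time, preferring to pair lines from the same $E_\ell$ when possible but pairing across different $E_\ell$'s otherwise — produces $2$-dimensional $G$-invariant subspaces, and each such $W_j$ is by construction spanned by one line from each of (at most two) of the $E_\ell$'s, so $W_j = \bigoplus_\ell (E_\ell \cap W_j)$ holds. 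The main obstacle, such as it is, is purely organizational: one must be careful that in the $1$-dimensional isotype the total number of lines is even (which follows since the sum of all the dimensions is $2N$, though one must check parity isotype-by-isotype — this is fine because a $2$-dimensional isotype always contributes an even dimension, so the $1$-dimensional isotypes must jointly contribute an even dimension, and in fact one can simply pair lines globally across the trivial and sign isotypes if a single isotype has odd dimension, since a line with $G$ acting trivially and a line with $G$ acting by $-1$ still span a $G$-invariant plane). This last point is the only place the hypothesis $\dim V = 2N$ (evenness) is used, and it is what makes the non-uniqueness in the statement unavoidable.
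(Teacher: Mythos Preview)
Your proposal is correct and is precisely the classical simultaneous diagonalization argument the paper invokes without giving details. One minor imprecision: for a general abelian $G$ there can be many one-dimensional real characters (any homomorphism $G\to\{\pm1\}$), not just ``trivial'' and ``sign'', but your pairing argument goes through unchanged since any two $G$-invariant lines span a $G$-invariant plane, and the total count of such lines is even for the reason you give.
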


\begin{definition}
For a group $G$ and two subgroups $H_1, H_2 \leq G$, we denote by $[H_1, H_2] \leq G$ the subgroup generated by elements of the form $h_1h_2 h_1^{-1} h_2 ^{-1}$ with $h_1 \in H_1$, $h_2 \in H_2$. We set $G^{(0)} : = G$,  and define inductively $G^{(j + 1 )} : = [G^{(j)} , G]$. $G$ is said to be \emph{nilpotent} if $G^{(s)}$ is the trivial group for some $s \in \mathbb{N}$.
\end{definition}

\subsection{Limits of almost homogeneous spaces}
In a portion of this paper, we follow the construction performed in \cite{zamora-limits}, which is an adaptation to metric spaces of a construction by Breuillard--Green--Tao in \cite{breuillard-green-tao}. This construction begins with the following result, which establishes that discrete groups that act almost transitively on proper geodesic spaces are virtually nilpotent.

\begin{lemma}\label{lem:almost-homogeoeus-subgroups}
Let  $(X_i, p_i)$ be a sequence of pointed proper geodesic spaces that converges to a pointed proper semi-locally-simply-connected geodesic space $(X,p)$ in the pointed Gromov--Hausdorff sense, and $G_i \leq \iso (X_i)$ a sequence of discrete groups with $\diam (X_i / G_i ) \to 0$. Then there exists $s \in \mathbb{N}$ and a sequence of finite-index subgroups $ G_i  ^{\prime} \leq G_i $ with 
\begin{gather}
 \lim_{i \to \infty }  \, \, \,  \sup _{x \in X_i} \, \sup _{g \in (G_i ^{\prime } ) ^{(s)}}  d(gx, x )   = 0 , \label{eq:gi-nilpotent} \\
   \limsup_{i \to \infty} \, [G_i : G_i ^{\prime}]  < \infty   .  \label{eq:bounded-index}
\end{gather}
 \end{lemma}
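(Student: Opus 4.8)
The plan is to run the Breuillard--Green--Tao machinery on approximate groups, as adapted to the metric setting in \cite{zamora-limits}, and to extract nilpotency from the local simple-connectedness of the limit $X$. First I would pass to a subsequence and, using Theorem \ref{thm:fukaya-yamaguchi}, arrange that the triples $(X_i, G_i, p_i)$ converge in the equivariant pointed Gromov--Hausdorff sense to some $(X, G, p)$; since $\diam(X_i/G_i) \to 0$, the quotient $X/G$ is a point, so $G$ acts transitively on $X$ and $X$ is a homogeneous proper geodesic space. Next, for a fixed scale $R$ (to be sent to infinity later, or kept at a value comparable to the diameter of a fundamental domain), I would consider the sets $S_i := \{ g \in G_i : d(gp_i, p_i) \leq R\}$. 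Because the action is $o(1)$-transitive, these are (symmetric, containing the identity) generating sets of $G_i$ up to bounded error, and a pigeonhole/covering argument at scale $R$ shows that $S_i$ has bounded doubling: $S_i^2$ is covered by a bounded number of translates of $S_i$, so $S_i$ is a $C$-approximate group for some $C = C(N)$ (here the non-collapsing is not needed, only the geodesic property and properness to control the number of $R$-balls needed to cover a $2R$-ball in a space admitting an almost-transitive action).

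With a uniform approximate-group structure in hand, I would invoke the structure theorem for approximate groups (Breuillard--Green--Tao), or rather its ultralimit/metric incarnation from \cite{zamora-limits}: after passing to an ultralimit, the sets $S_i$ produce a locally compact group $L$ equipped with a left-invariant metric, arising as a quotient of an ultralimit of the $G_i$-actions, and $L$ is (by the Gleason--Yamabe theorem, which is the heart of BGT) a Lie group; the local simple-connectedness of $X$ forces the identity component $L_0$ to have no small subgroups issues and to be a connected nilpotent Lie group of dimension at most $N$ --- this is exactly where semi-local simple connectedness enters, ruling out pathological limit groups and guaranteeing that the "macroscopic" limit of the $G_i$ is a simply-connected nilpotent Lie group acting transitively on $X$, while the "microscopic" part collapses. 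Pulling this structure back to finite $i$: there is $s \in \mathbb{N}$ (the nilpotency step of $L_0$, hence $s \leq N$) and finite-index subgroups $G_i' \leq G_i$ with $[G_i : G_i'] \leq C(N)$ such that the $s$-th term of the lower central series $(G_i')^{(s)}$ maps into the part of the ultralimit group that acts trivially on $X$, i.e. its elements move every point of $X_i$ by $o(1)$; this gives \eqref{eq:gi-nilpotent} and \eqref{eq:bounded-index}.

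The main obstacle I expect is the passage from the abstract Lie-group structure on the ultralimit $L$ back to uniform statements about the groups $G_i$ at finite $i$ --- in particular verifying that the displacement bound \eqref{eq:gi-nilpotent} is uniform in $x \in X_i$ and not merely at the basepoint, and that the index bound \eqref{eq:bounded-index} does not degenerate. Uniformity in $x$ should follow from the $o(1)$-transitivity of $G_i$ (any $x$ is within $o(1)$ of $g p_i$ for some $g \in G_i$, and conjugating by $g$ moves $(G_i')^{(s)}$ into a bounded neighborhood, where the displacement estimate at the basepoint applies up to the word length of $g$, which is controlled once one restricts to a bounded scale). The other delicate point is ensuring that the group $L_0$ really is nilpotent rather than merely solvable or almost-nilpotent; here one uses that $X$ is a homogeneous geodesic space that is a Gromov--Hausdorff limit of the $X_i$ together with the semi-local simple connectedness, following the argument of \cite[Section 3 or 4]{zamora-limits} verbatim, so that the only new content relative to that reference is bookkeeping the hypotheses in the present generality.
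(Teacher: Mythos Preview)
Your approach is the same as the paper's: both run the Breuillard--Green--Tao / \cite{zamora-limits} machinery and extract the finite-index nilpotent-up-to-small subgroups from it. The paper's proof is simply a three-line citation chain --- Theorem~4.1 of \cite{zamora-limits} identifies $X$ as a nilpotent Lie group with an invariant sub-Finsler metric, such metrics are locally doubling by \cite{nagel-stein-wainger}, and then Remark~3.4 and Lemma~3.2 of \cite{zamora-limits} yield the $G_i'$. What you wrote is essentially an unpacking of those black boxes, and the overall logic is sound.

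One point in your sketch is weaker than the paper's treatment: your justification for the approximate-group/doubling step. You claim that ``only the geodesic property and properness'' control the number of $R$-balls needed to cover a $2R$-ball, and you write $C = C(N)$ for a parameter $N$ that does not appear in the hypotheses of the lemma. Properness and geodesicity alone give no uniform doubling bound (homogeneous trees are proper and geodesic). The paper's order of operations avoids this: it \emph{first} uses \cite[Theorem~4.1]{zamora-limits} to identify the limit $X$, and only \emph{then} reads off local doubling from the sub-Finsler structure, which is exactly the input that Remark~3.4 and Lemma~3.2 of \cite{zamora-limits} require. If you want to keep your ordering, you should instead argue that pointed Gromov--Hausdorff convergence to the fixed proper space $X$ gives uniform-in-$i$ covering bounds at any fixed scale near the basepoint, with constants depending on $X$ rather than on a phantom $N$; this is correct but should be stated carefully.
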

\begin{proof}
 By \cite[Theorem 4.1]{zamora-limits}, $X$ is a nilpotent Lie group equipped with a sub-Finsler metric. Sub-Finsler metrics are locally doubling \cite{nagel-stein-wainger}, so  \cite[Remark 3.4]{zamora-limits} applies and \cite[Lemma 3.2]{zamora-limits} provides the desired groups $G_i^{\prime}$.
\end{proof}

 It is well known that if a Riemannian manifold admits a  nilpotent group of isometries acting transitively, then such group acts freely \cite{wilson}. Conditions \eqref{eq:gi-nilpotent} and \eqref{eq:bounded-index} imply that the groups $G_i'$ are almost nilpotent and act almost transitively, so it is not surprising that they act almost freely. This is the content of the following lemma.

\begin{lemma}\label{lem:almost-translations}
    Let $(X_i, p_i)$, $(X,p)$, $G_i$, $G_i^{\prime}$, be as in Lemma \ref{lem:almost-homogeoeus-subgroups}. Then after passing to a subsequence the groups $G_i^{\prime}$ converge in the equivariant Gromov--Hausdorff sense to a connected nilpotent group $\Gamma \leq \iso (X)$ acting freely and transitively.  
\end{lemma}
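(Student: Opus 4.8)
The plan is to combine Lemma \ref{lem:almost-homogeoeus-subgroups} with Theorem \ref{thm:fukaya-yamaguchi} and the structural result of \cite[Theorem 4.1]{zamora-limits} used in its proof. First I would apply Theorem \ref{thm:fukaya-yamaguchi}: since $(X_i, p_i) \to (X,p)$ in the pointed Gromov--Hausdorff sense and $G_i^{\prime} \leq \iso(X_i)$, after passing to a subsequence the triples $(X_i, G_i^{\prime}, p_i)$ converge equivariantly to some triple $(X, \Gamma, p)$ with $\Gamma \leq \iso(X)$, and the quotients $X_i / G_i^{\prime}$ converge to $X / \Gamma$. Because $[G_i : G_i^{\prime}]$ stays bounded by \eqref{eq:bounded-index} and $\diam(X_i / G_i) \to 0$, we get $\diam(X_i / G_i^{\prime}) \to 0$ as well, hence $\diam(X/\Gamma) = 0$, i.e. $\Gamma$ acts transitively on $X$.

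Next I would argue $\Gamma$ acts freely. Here I would invoke the structure established in the proof of Lemma \ref{lem:almost-homogeoeus-subgroups}: $X$ is a simply-connected nilpotent Lie group $N$ with a left-invariant sub-Finsler metric, and $\Gamma$ acts transitively by isometries. The limit relation \eqref{eq:gi-nilpotent} forces $\Gamma^{(s)}$ to act trivially, so $\Gamma$ is nilpotent. Since $X$ is contractible, a transitive isometric action with nontrivial point stabilizers would have to involve a nontrivial compact subgroup fixing a point; but combined with transitivity and the sub-Finsler homogeneous structure (the isometry group of a simply-connected nilpotent Lie group with left-invariant sub-Finsler metric splits as the group of left translations extended by a compact group of automorphisms, cf. the description used in \cite{zamora-limits}), one identifies the identity component of $\Gamma$ as acting simply transitively by translations, and the limit estimate \eqref{eq:gi-nilpotent} together with discreteness of the $G_i^{\prime}$ being approximated rules out the compact-automorphism part surviving in the limit as a stabilizer. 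In particular $\Gamma$ is connected: any component of $\Gamma$ other than the identity component would, under equivariant convergence from the $G_i^{\prime}$, produce elements moving $p_i$ a definite bounded-away-from-zero amount while $\diam(X_i/G_i^{\prime}) \to 0$ forces the translational part to fill out $X$, contradicting the structure unless the extra components are absorbed. The cleanest way is: $\Gamma$ is a connected Lie group acting transitively and isometrically on the nilpotent Lie group $X$ with $\Gamma^{(s)} = 1$; by the classification of isometry groups of sub-Finsler nilpotent Lie groups (the relevant statement being already available from \cite{zamora-limits, breuillard-green-tao}), the only such $\Gamma$ is the group of left translations, which is connected, nilpotent, and acts freely and transitively.

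The main obstacle I anticipate is the freeness/connectedness claim, i.e. pinning down that the limit group $\Gamma$ is exactly the translation group rather than a larger transitive subgroup of $\iso(X)$ with point stabilizers, or a disconnected one. The transitivity and nilpotency ($\Gamma^{(s)}=1$) come easily from \eqref{eq:gi-nilpotent}, \eqref{eq:bounded-index}, and Theorem \ref{thm:fukaya-yamaguchi}; the subtlety is that equivariant Gromov--Hausdorff limits of discrete groups can a priori acquire continuous stabilizers, so one must use the fine structure of $X$ as a sub-Finsler nilpotent Lie group — in particular that its isometry group is (translations) $\rtimes$ (compact automorphisms) — to conclude a transitive nilpotent subgroup must be the full translation group, hence connected and free. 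I would organize the write-up so that this structural input is cited from \cite{zamora-limits} (where the same construction is carried out), keeping the present proof short.
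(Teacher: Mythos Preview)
Your proposal is correct and takes essentially the same approach as the paper: extract a limit group via Fukaya--Yamaguchi precompactness, then cite \cite{zamora-limits} (specifically Theorem~4.1 for nilpotency and transitivity, and Proposition~5.2 for connectedness and freeness) for the structural properties. Your instinct to defer the connectedness and freeness to that reference is exactly right, since your own sketched argument for these points has gaps (e.g., $X$ need not be simply connected in the generality of Lemma~\ref{lem:almost-homogeoeus-subgroups}, and the ``cleanest way'' you propose presupposes the connectedness of $\Gamma$ that you are trying to establish).
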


\begin{proof} 
    By Theorem \ref{thm:fukaya-yamaguchi},  after passing to a subsequence the groups $G_i^{\prime}$ converge to a group $\Gamma \leq \iso (X)$. By the proof of \cite[Theorem 3.1]{zamora-limits},  there is a nilpotent subgroup $ \mathcal{O} \leq \Gamma $ acting freely and transitively  on $X$, so we can identify $\mathcal{O}$ with  $X$. By \cite[Theorem 4.1]{zamora-limits},  $\mathcal{O} $ is a Lie group, and by  \cite[Proposition 5.2]{zamora-limits}, $ \Gamma$ acts freely on $X$. Since the action of $\mathcal{O}$ is transitive, the only possibility is that $\Gamma = \mathcal{O}$.
\end{proof}

\color{black}

\begin{corollary}\label{cor:free}
    Let $(X_i, p_i)$, $(X,p)$, $G_i$, $G_i^{\prime}$, be as in Lemma \ref{lem:almost-homogeoeus-subgroups}. If the groups $G_i$ are such that any sequence of small subgroups is eventually trivial, then $G_i^{\prime}$ acts freely for $i$ large enough. 
\end{corollary}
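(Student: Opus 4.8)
The plan is to combine Lemma \ref{lem:almost-translations} with the small-subgroups hypothesis. By Lemma \ref{lem:almost-translations}, after passing to a subsequence the groups $G_i^{\prime}$ converge in the equivariant Gromov--Hausdorff sense to a connected nilpotent Lie group $\Gamma \leq \iso(X)$ that acts \emph{freely} and transitively on $X$. The freeness of the limit action is the key input: it says that in the limit there is no nontrivial isometry fixing a point, and I want to transfer this to say that for large $i$ there is no nontrivial element of $G_i^{\prime}$ moving points by an arbitrarily small amount on bounded sets.

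First I would argue by contradiction: suppose that after passing to a subsequence, $G_i^{\prime}$ does not act freely for any $i$, so there is $x_i \in X_i$ and a nontrivial $g_i \in G_i^{\prime}$ with $g_i x_i = x_i$. Since $\diam(X_i/G_i^{\prime}) \to 0$ (because $\diam(X_i/G_i) \to 0$ and $G_i^{\prime} \leq G_i$ has bounded index, or more directly since $\diam(X_i/G_i^{\prime}) \le [G_i:G_i^{\prime}]\cdot \diam(X_i/G_i)$ up to a constant, and in any case $X_i/G_i^{\prime}$ is still small), one may translate $x_i$ to stay near $p_i$: choose $h_i \in G_i^{\prime}$ with $d(h_i x_i, p_i) \to 0$, and replace $g_i$ by $h_i g_i h_i^{-1}$, which is still nontrivial and now fixes a point converging to $p$. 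Thus without loss of generality $g_i$ fixes a point $q_i$ with $q_i \to p$.

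Next I would consider the cyclic (or closed cyclic) subgroups $\langle g_i \rangle \leq G_i^{\prime}$. There are two cases depending on the behavior of $\displaystyle \sup_{x \in B_R(p_i)} d(g_i x, x)$ as $i \to \infty$ for fixed $R$. If this quantity tends to $0$ for every $R$ — i.e.\ the groups $\langle g_i\rangle$ (or their closures) form a sequence of small subgroups of $G_i$ — then by the small-subgroups hypothesis $\langle g_i \rangle$ is eventually trivial, contradicting $g_i \neq e$. Otherwise, along a further subsequence $\sup_{x \in B_R(p_i)} d(g_i x, x) \geq c > 0$ for some fixed $R, c$; then by Fukaya--Yamaguchi precompactness (Theorem \ref{thm:fukaya-yamaguchi}) the isometries $g_i$ (or a suitable subsequence of their powers, extracted so that the limit is nontrivial) converge to an isometry $g \in \Gamma$ with $g \neq \Id$, and since $g_i q_i = q_i$ with $q_i \to p$, the limit satisfies $g p = p$. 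This contradicts the freeness of the $\Gamma$-action from Lemma \ref{lem:almost-translations}.

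The main obstacle I anticipate is the middle step: ensuring the limiting isometry $g$ is genuinely nontrivial. A sequence $g_i$ with $d(g_i p_i, p_i)$ bounded away from $0$ converges (after subsequence) to a nontrivial limit immediately; but in our situation $g_i$ fixes a point near $p$, so $d(g_i p_i, p_i) \to 0$ and one cannot conclude directly. The fix is to work with the quantity $\sup_{B_R(p_i)} d(g_i x, x)$, which in the non-small case stays bounded below; an isometry of a proper space that fixes $q_i \to p$ but moves some point of $B_R(p_i)$ by at least $c$ must converge (after subsequence) to a nontrivial isometry of $X$ fixing $p$. This requires only the standard Arzel\`a--Ascoli-type argument underlying equivariant Gromov--Hausdorff convergence, packaged in Theorem \ref{thm:fukaya-yamaguchi}, together with the observation that the "small" dichotomy is exhaustive by the very definition of small subgroups.
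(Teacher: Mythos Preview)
Your overall strategy matches the paper's: argue by contradiction, use Lemma \ref{lem:almost-translations} to obtain a free limit action of $\Gamma$, and then play this off against the small-subgroups hypothesis. However, there is a genuine gap in your dichotomy. You split according to whether $\sup_{x \in B_R(p_i)} d(g_i x, x)\to 0$ for every $R$, and in the first case you write ``i.e.\ the groups $\langle g_i\rangle$ form a sequence of small subgroups''. This identification is unjustified: smallness requires \emph{every} element of the group to be uniformly close to the identity on bounded sets, and powers $g_i^{k}$ may move points of $B_R(p_i)$ by a definite amount even when $g_i$ itself does not. (The stabilizer of $x_i$ in the discrete group $G_i$ is indeed finite since $X_i$ is proper, but its order may tend to infinity with $i$, so the triangle inequality gives no uniform control over all powers.) Your parenthetical about using ``a suitable subsequence of their powers'' appears in Case~2, where it is not needed---there $g_i$ itself already has a nontrivial limit; it is in Case~1 that passing to powers becomes the issue, and there it destroys the conclusion rather than helping it. As written, the two cases are not exhaustive.

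The repair is exactly what the paper does, and it eliminates the dichotomy altogether: instead of tracking a single element $g_i$, take the full stabilizer $K_i \leq G_i^{\prime}$ of $x_i$ (with $x_i \in B_1(p_i)$ as you arranged), and apply Theorem \ref{thm:fukaya-yamaguchi} to the sequence of \emph{groups} $K_i$. Along a subsequence $K_i$ converges to a closed subgroup $K \leq \Gamma$ contained in the stabilizer of $\lim x_i$; by freeness of $\Gamma$ this forces $K$ to be trivial. Convergence of $K_i$ to the trivial group is exactly the statement that the $K_i$ are small, and the hypothesis then yields the contradiction.
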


\begin{proof}
    Assuming the contrary, after passing to a subsequence there are $x_i \in X_i$ with non-trivial stabilizers $K_i \leq G_i^{\prime}$. Notice that since the groups $G_i^{\prime}$ act almost transitively, we can assume $x_i \in B_1(p_i)$ for $i$ large enough.  After again passing to a subsequence, $x_i$ converges to a point $x \in X$ and the groups $K_i$  converge to a group $K \leq \Gamma $ contained in the stabilizer of $x$. By Lemma \ref{lem:almost-translations}, $K$ is trivial, so the groups $K_i$ are small, contradicting the hypothesis. 
 \end{proof}

Lemma \ref{lem:almost-translations} is particularly powerful when paired with Lemma \ref{lem:euclidean-nilpotent-groups}, which follows from the proof of \cite[Corollary 4.2.7]{thurston}.

\begin{lemma}\label{lem:euclidean-nilpotent-groups}
    Let $\Gamma \leq \iso (\mathbb{R}^n)$ be a connected nilpotent group of isometries acting transitively. Then $\Gamma$ is the group of translations in $\mathbb{R}^n$. 
\end{lemma}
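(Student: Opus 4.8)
The plan is to use the structure of the isometry group of Euclidean space together with the hypothesis that $\Gamma$ is connected, nilpotent, and acts transitively. First, I would recall that $\iso(\mathbb{R}^n) = O(n) \ltimes \mathbb{R}^n$, so every $\gamma \in \Gamma$ can be written as $\gamma(x) = A_\gamma x + b_\gamma$ with $A_\gamma \in O(n)$ and $b_\gamma \in \mathbb{R}^n$. The assignment $\gamma \mapsto A_\gamma$ is the \emph{rotational part} homomorphism $r : \Gamma \to O(n)$, whose kernel is precisely the group of translations in $\Gamma$. Since $\Gamma$ is connected and the homomorphism $r$ is continuous, the image $r(\Gamma) \leq O(n)$ is a connected nilpotent Lie subgroup of $O(n)$. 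My goal is to show $r(\Gamma)$ is trivial, which immediately gives that $\Gamma$ consists entirely of translations; transitivity then forces $\Gamma$ to be \emph{all} translations.

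To see that $r(\Gamma)$ is trivial, I would argue as follows. A connected nilpotent Lie subgroup of the compact group $O(n)$ has closure a connected compact nilpotent Lie group, which must be a torus $T$ (a connected compact nilpotent Lie group is abelian, hence a torus). Now I use transitivity: since $\Gamma$ acts transitively on $\mathbb{R}^n$, the orbit of the origin is all of $\mathbb{R}^n$, so $\{ b_\gamma : \gamma \in \Gamma \} = \mathbb{R}^n$. Fix $\gamma_0 \in \Gamma$ with rotational part $A = A_{\gamma_0}$. For any translation direction realized in $\Gamma$ — and by transitivity one can find, for each $v \in \mathbb{R}^n$, an element whose rotational part we can compare — I would examine the commutator structure: if $t_v$ denotes a translation by $v$ lying in $\Gamma$ (whenever it does), then $[\gamma_0, t_v] = t_{(A - \mathrm{Id})v}$, a translation. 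More robustly, since $\Gamma$ is nilpotent of some step $s$, iterated commutators of length $s+1$ vanish. The key computation: for $\gamma, \eta \in \Gamma$, the commutator $[\gamma,\eta]$ has rotational part $[A_\gamma, A_\eta]$ and translational part that is an affine-linear expression in $b_\gamma, b_\eta$. Because $\Gamma$ acts transitively, its translational parts span $\mathbb{R}^n$, and the nilpotency of step $s$ applied to long commutators forces, after unwinding, that $(A_\gamma - \mathrm{Id})$ acts nilpotently on a spanning set — but an element of $O(n)$ that is unipotent must be the identity. Hence $A_\gamma = \mathrm{Id}$ for all $\gamma \in \Gamma$.

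I expect the main obstacle to be making the commutator bookkeeping precise: one must carefully track how the translational parts of iterated commutators depend on the $b_\gamma$'s and the rotational parts, and deduce from the vanishing of $(s+1)$-fold commutators — combined with transitivity guaranteeing enough translational parts to span — that each $A_\gamma - \mathrm{Id}$ is nilpotent as a linear operator. Once that is established, the finish is immediate: a unipotent orthogonal matrix is the identity (its eigenvalues are roots of unity on the unit circle and all equal to $1$, and it is diagonalizable since it is orthogonal, so it equals $\mathrm{Id}$), whence $r(\Gamma) = \{\mathrm{Id}\}$, so $\Gamma \leq \mathbb{R}^n$ (translations), and transitivity gives $\Gamma = \mathbb{R}^n$. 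Since the statement is attributed to the proof of \cite[Corollary 4.2.7]{thurston}, I would also cross-check the argument there: Thurston's treatment of Bieberbach-type results handles exactly the passage from a nilpotent (indeed virtually abelian crystallographic-type) group of Euclidean isometries to its translation subgroup, and the transitivity hypothesis here removes the need for discreteness, simplifying the argument to the rotational-part analysis sketched above.
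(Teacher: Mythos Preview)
The paper gives no self-contained argument, deferring entirely to the proof of \cite[Corollary 4.2.7]{thurston}; your outline is essentially the argument one extracts from that source: analyze the rotational-part homomorphism $r:\Gamma\to O(n)$ and show its image is trivial by combining nilpotency with the observation that a unipotent orthogonal matrix is the identity.

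The obstacle you anticipate is genuine and is more than bookkeeping. Transitivity tells you the translational parts $\{b_\eta:\eta\in\Gamma\}$ cover $\mathbb{R}^n$, but each $b_\eta$ comes attached to a rotation $A_\eta$, and the translational part of the iterated commutator $[\gamma,[\gamma,\ldots,[\gamma,\eta]\ldots]]$ is \emph{not} $(A_\gamma-I)^s b_\eta$ once $A_\eta\neq I$: cross-terms involving $A_\eta$ and its commutators with $A_\gamma$ appear, so you cannot directly conclude that $A_\gamma-I$ is nilpotent on a spanning set. A clean repair works at the Lie algebra level. Let $\mathfrak{g}\subset\mathfrak{so}(n)\ltimes\mathbb{R}^n$ be the Lie algebra of $\Gamma$ and set $\mathfrak{t}=\mathfrak{g}\cap(\{0\}\times\mathbb{R}^n)$, the ideal of pure infinitesimal translations. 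For $(A,v)\in\mathfrak{g}$ and $(0,w)\in\mathfrak{t}$ one has $\mathrm{ad}_{(A,v)}(0,w)=(0,Aw)$, so $A|_{\mathfrak{t}}$ is nilpotent (being the restriction of the nilpotent $\mathrm{ad}_{(A,v)}$) and skew-symmetric, hence zero. Thus every rotational part annihilates $\mathfrak{t}$ and preserves $\mathfrak{t}^\perp$, giving a homomorphism $\Gamma\to\iso(\mathfrak{t}^\perp)$ with connected nilpotent transitive image; now induct on $n$. For the induction to make progress one needs $\mathfrak{t}\neq 0$ when $n\geq 1$: this holds because a nilpotent subalgebra of the compact algebra $\mathfrak{so}(n)$ carries an $\mathrm{ad}$-invariant inner product and is therefore abelian of dimension at most $\lfloor n/2\rfloor$, while transitivity forces $\dim\mathfrak{g}\geq n$, so the rotational projection cannot be injective.
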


Lemma \ref{lem:malcev-construction} below gives an explicit description of the groups $G_i^{\prime}$.

\begin{definition}
  Let $G$ be a group, $u_1, u_2, \ldots , u_r \in G$, and $N_1, N_2, \ldots , N_r $ $\in \mathbb{R}^+$. The set $P(u_1, \ldots , u_r ; N_1, \ldots , N_r) \subset G$ is defined to be the set of elements that can be expressed as words in the $u_i$'s and their inverses such that the number of appearances of $u_i$ and $u_i^{-1}$ is not more than $N_i$.  We then say that  $P(u_1, \ldots , u_r ; N_1, \ldots , N_r)$  is a \emph{nilprogression in $C$-normal form} for some $C>0$ if it also satisfies the following properties:
\begin{enumerate}[(1)]
\item  For all $1 \leq i \leq j \leq r$, and all choices of signs, we have
\begin{center}
$  [ u_i^{\pm 1} , u_j^{\pm 1} ]  \in P \left(  u_{j+1} , \ldots , u_r ; \dfrac{CN_{j+1}}{N_iN_j} , \ldots , \dfrac{CN_r}{N_iN_j}  \right). $
\end{center}\label{condition:n1}
\item The expressions $ u_1 ^{n_1} \ldots u_r^{n_r} $ represent distinct  elements as $n_1, \ldots , n_r$ range over the integers with  $\vert n_1 \vert \leq  N_1/C , \ldots , \vert n_r \vert \leq  N_r/C$.\label{condition:n2}
\item One has 
\[  \frac{1}{C} (2\lfloor N_1 \rfloor + 1 ) \cdots ( 2\lfloor N_r \rfloor + 1 ) \leq \vert P \vert  \leq C (2 \lfloor N_1 \rfloor + 1 ) \cdots ( 2\lfloor N_r \rfloor + 1 ) .               \]\label{condition:n3}
\end{enumerate} 
For a nilprogression $P$ in $C$-normal form, and $\varepsilon \in (0,1)$, the set  $P( u_1, \ldots , u_r ; $ $ \varepsilon N_1, \ldots , \varepsilon N_r   )$ also satisfies conditions \eqref{condition:n1} and \eqref{condition:n2}, and we denote it by $\varepsilon P$. We define the \emph{thickness} of $P$ as the minimum of $N_1, \ldots , N_r$ and we denote it by $ \thi (P)$. The set $\{ u_1 ^{n_1 }\ldots u_r^{n_r} \vert \vert n_i \vert \leq N_i/C  \}$ is called the \emph{grid part of }$P$, and is denoted by $G(P)$.
\end{definition}

\begin{lemma}\label{lem:malcev-construction}
Let $(X_i, p_i)$, $(X,p)$, $G_i$, $G_i^{\prime}$ be as in Lemma \ref{lem:almost-homogeoeus-subgroups} and $n$ the topological dimension of $X$. Then for $i$ large enough, there are torsion-free nilpotent groups $\tilde{\Gamma}_i$ generated by elements $\tilde{u}_{1,i}, \ldots , \tilde{u}_{n,i} \in \tilde{\Gamma}_i$, and $N_{1,i}, \ldots , N_{n,i} \in \mathbb{R}^+$ with the following properties:
\begin{enumerate}[(i)]
        \item The set map $ \alpha _i  : \mathbb{Z}^n \to \tilde{\Gamma } _i$ given by 
        \[  \alpha_i (m_1 , \ldots , m_n) = \tilde{u}_{1,i}^{m_1}\cdots \tilde {u}_{n,i}^{m_n}       \]
        is a bijection.  Moreover, there are polynomials $Q_i : \mathbb {R}^n \times \mathbb{R}^n\to \mathbb{R}^n$ of degree $\leq d(n)$ such that the product in $\tilde{\Gamma}_i$ is given by 
        \[g_1 \cdot g_2 = \alpha _i ( Q_i ( \alpha _i ^{-1} (g_1) , \alpha _i ^{-1} (g_2) )  ).  \]\label{conclusion:malcev-1}
        \item  There are small normal subgroups $W_i \trianglelefteq G_i^{\prime }$ and surjective group morphisms 
        \[\Phi _i : \tilde {\Gamma }_ i \to \Gamma_i : = G_i^{\prime } / W_i\]
        with $\Ker (\Phi_i)$ containing an isomorphic copy of $\pi_1(X)$.  \label{conclusion:malcev-2}
        \item There is $C > 0 $ such that if $u_{j,i} : = \Phi _i (\tilde{u}_{j,i}) $ for each $j \in \{ 1, \ldots , n\}$, the set
        \[   P_i : = P (  u_{1,i}, \ldots , u_{n,i} ; N_{1,i}, \ldots , N_{n,i} )  \subset \Gamma_i       \]
        is a nilprogression in $C$-normal form with $\thi (P_i ) \to \infty $.  \label{conclusion:malcev-3}
        \item  \label{conclusion:malcev-4} For each $\varepsilon > 0 $ there is $\delta > 0 $ such that 
        \begin{align*}
            G(\delta P_i )     \subset \{ g \in \Gamma_i \, \vert  \,  &d(g[p_i], [p_i] ) \leq \varepsilon \}  , \\
              \{ g \in \Gamma_i \, \vert  \, d(g [p_i ], [p_i&] ) \leq \delta \}  \subset     G( \varepsilon  P_i   ) 
        \end{align*}
        for $i$ large enough, where we are considering the action of $\Gamma _ i$ on $X_i / W_i $,  
\end{enumerate}
\end{lemma}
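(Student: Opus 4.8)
The plan is to reduce Lemma~\ref{lem:malcev-construction} to an application of the Breuillard--Green--Tao structure theory for approximate groups, exactly as packaged in \cite[Section 4]{zamora-limits}, applied to the groups $\Gamma_i = G_i^\prime / W_i$. First I would recall from Lemma~\ref{lem:almost-translations} (and its proof) that $\Gamma_i$ converges in the equivariant Gromov--Hausdorff sense to a connected nilpotent Lie group $\Gamma \leq \iso(X)$ acting freely and transitively, so $\Gamma$ is diffeomorphic to $\R^n$ where $n$ is the topological dimension of $X$. Fix a reference scale (say the action of $\Gamma_i$ on $X_i/W_i$ with basepoint $[p_i]$) and consider the sets $S_i := \{ g \in \Gamma_i \mid d(g[p_i],[p_i]) \leq 1 \}$. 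Since $\diam(X_i/G_i) \to 0$ and the $W_i$ are small, these $S_i$ are finite symmetric generating sets whose "balls" $S_i^k$ have polynomial growth with uniformly controlled constants; this is precisely the hypothesis under which \cite[Theorem 4.1]{zamora-limits} (following \cite{breuillard-green-tao}) produces, for $i$ large, a finite-index subgroup and a nilprogression capturing the large-scale geometry. The clean way to phrase what we extract is: there is a nilprogression $P_i = P(u_{1,i},\ldots,u_{r,i}; N_{1,i},\ldots,N_{r,i})$ in $C$-normal form inside $\Gamma_i$, with $\thi(P_i) \to \infty$, such that $G(\delta P_i)$ and $G(\varepsilon P_i)$ sandwich the metric balls as in conclusion~\eqref{conclusion:malcev-4}. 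The equality $r = n$ comes from comparing the growth rate of $P_i$ (which is $\prod (2\lfloor N_{j,i}\rfloor+1)$ up to $C$) with the volume growth of balls in $\Gamma_i$, which converges to that of the $n$-dimensional limit $\Gamma$; since each $N_{j,i} \to \infty$ this forces exactly $n$ generators.

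Next I would construct the abstract model group $\tilde\Gamma_i$. The nilprogression relations \eqref{condition:n1} say that the ordered products $\tilde u_{1,i}^{m_1}\cdots\tilde u_{n,i}^{m_n}$ behave, under multiplication, according to commutator identities of a fixed nilpotency class $s$ (the class of $\Gamma$, via Lemma~\ref{lem:almost-homogeoeus-subgroups}), with the "carrying" bounded polynomially in the $N_{j,i}$. I would define $\tilde\Gamma_i$ to be the torsion-free nilpotent group on generators $\tilde u_{1,i},\ldots,\tilde u_{n,i}$ presented by the exact commutator relations $[\tilde u_{i},\tilde u_{j}] = $ (the corresponding word in later generators dictated by $P_i$), i.e. a Mal'cev-type presentation; by the standard theory of such groups the collection map $\alpha_i : \Z^n \to \tilde\Gamma_i$, $(m_1,\ldots,m_n)\mapsto \tilde u_{1,i}^{m_1}\cdots\tilde u_{n,i}^{m_n}$ is a bijection and the multiplication pulls back to a polynomial map $Q_i$ of degree bounded in terms of $s$ and $n$ only --- this gives \eqref{conclusion:malcev-1}. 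The map $\Phi_i : \tilde\Gamma_i \to \Gamma_i$ sending $\tilde u_{j,i} \mapsto u_{j,i}$ is then a well-defined surjective homomorphism because the defining relations of $\tilde\Gamma_i$ hold in $\Gamma_i$ by construction of $P_i$; surjectivity follows since $S_i \subset G(C^\prime P_i)$ lies in the image and $S_i$ generates. For \eqref{conclusion:malcev-2} I would note that $\pi_1(X)$ embeds into $\Ker(\Phi_i)$: the kernel $W_i \trianglelefteq G_i^\prime$ recovered from the small-subgroup analysis (Corollary~\ref{cor:free} and the proof of Lemma~\ref{lem:almost-translations}) contains a copy of $\pi_1(X)$ because the universal cover $\tilde X_i$ carries the lifted $\rcd(K,N)$ structure and the deck group $\pi_1(X_i) \supset \pi_1(X)$ becomes small in the limit; chasing this through the quotient $\Gamma_i = G_i^\prime/W_i$ and through $\Phi_i$ puts the copy of $\pi_1(X)$ inside $\Ker(\Phi_i)$.

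Finally, conclusion~\eqref{conclusion:malcev-4} is the statement that the word-length metric coming from $P_i$ is uniformly bi-Lipschitz-at-large-scale to the orbit (pseudo)metric $g \mapsto d(g[p_i],[p_i])$ on $\Gamma_i$; this is exactly \cite[Remark 3.4]{zamora-limits} together with \cite[Lemma 3.2]{zamora-limits}, using that the limit metric on $\Gamma$ is a (locally doubling) sub-Finsler metric by \cite[Theorem 4.1]{zamora-limits} and \cite{nagel-stein-wainger}. Concretely: if $g \in G(\delta P_i)$ it is a word of bounded length in generators $u_{j,i}$ each moving $[p_i]$ by at most a uniform amount, so $d(g[p_i],[p_i])$ is small when $\delta$ is small and $i$ large; conversely the sandwiching of balls by grid parts is a compactness argument against the limit --- if it failed there would be $g_i$ with $d(g_i[p_i],[p_i]) \leq \delta$ but $g_i \notin G(\varepsilon P_i)$, and passing to the equivariant limit one would obtain a nontrivial limit element of $\Gamma$ of zero displacement, contradicting freeness.

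\medskip

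The main obstacle I anticipate is \eqref{conclusion:malcev-2}: producing the small normal subgroups $W_i$ and verifying that $\Ker(\Phi_i)$ genuinely contains an isomorphic copy of $\pi_1(X)$ rather than merely a quotient of it. This requires carefully tracking the interplay between three groups --- the deck group $\pi_1(X_i/W_i)$, the limit fundamental group $\pi_1(X)$, and the kernel of the Mal'cev homomorphism --- and using semi-local simple connectedness of $X$ (available for $\rcd(K,N)$ spaces by \cite{wang}) to ensure $\pi_1(X)$ is finitely presented and sits discretely inside the picture. Everything else is a faithful transcription of \cite{zamora-limits, breuillard-green-tao}; the degree bound $d(n)$ on $Q_i$ and the fixed nilpotency class $s$ are the only places where uniformity in $i$ must be checked, and both follow from the uniform polynomial growth of the $S_i^k$.
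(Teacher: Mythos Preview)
Your overall strategy --- reduce everything to the structure theory in \cite{zamora-limits} and \cite{breuillard-green-tao} --- is exactly what the paper does, and your sketches of \eqref{conclusion:malcev-1}, \eqref{conclusion:malcev-3}, and \eqref{conclusion:malcev-4} are broadly on target. The paper's citations are slightly different from yours: \cite[Section 6]{zamora-limits} for the construction of the $W_i$ via the escape norm, \cite[Section 9]{breuillard-green-tao} for the short-basis nilprogressions, \cite[Section 8]{zamora-limits} for the Mal'cev coordinates giving \eqref{conclusion:malcev-1}, and \cite[Theorem 7.1]{zamora-limits} for the second inclusion in \eqref{conclusion:malcev-4}. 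Your citations of \cite[Remark 3.4, Lemma 3.2]{zamora-limits} pertain to the construction of $G_i'$ in Lemma~\ref{lem:almost-homogeoeus-subgroups}, not to the ball-versus-grid comparison. The paper also notes that the cited results are phrased in terms of ultralimits and non-standard analysis and are translated back via \L os's Theorem, a point worth flagging since it affects how one actually extracts the $i$-dependent statements.

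Your treatment of \eqref{conclusion:malcev-2}, however, has a genuine gap, which you correctly flagged as the main obstacle. You conflate $W_i \trianglelefteq G_i'$ with $\Ker(\Phi_i) \leq \tilde\Gamma_i$: these live in different groups with no map between them, so ``chasing'' a copy of $\pi_1(X)$ from one to the other is not a meaningful operation. The assertion that ``the deck group $\pi_1(X_i) \supset \pi_1(X)$ becomes small'' is also not well-posed --- $G_i$ is an arbitrary discrete subgroup of $\iso(X_i)$, not a deck group, and neither $\pi_1(X_i)$ nor any universal cover $\tilde X_i$ plays a role in the statement. The actual mechanism, which the paper attributes to \cite[Proposition 9.1]{zamora-limits}, is that $\tilde\Gamma_i$ sits as a lattice in the simply connected nilpotent Lie group $(\mathbb{R}^n, Q_i)$, and in the (ultra)limit this simply connected group covers the transitive limit group $\Gamma$, which is identified with $X$; the kernel of that covering is $\pi_1(X)$, and it is this kernel that is reflected inside $\Ker(\Phi_i)$ for large $i$. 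None of this passes through $W_i$ or through $\pi_1(X_i)$.
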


\begin{proof}
    In \cite[Section 6]{zamora-limits} first the small subgroups $W_i\trianglelefteq G_i^{\prime }$ are constructed (using the escape norm from \cite{breuillard-green-tao}) in such a way that the quotients $G_i^{\prime }/ W_i$ have no small subgroups. Then in \cite[Section 9]{breuillard-green-tao} the nilprogressions $P_i \subset \Gamma _ i$ are obtained via a short basis construction. \eqref{conclusion:malcev-3} and the first item of \eqref{conclusion:malcev-4} follow from \cite[Theorem 9.3]{breuillard-green-tao} and its proof, while the second item of  \eqref{conclusion:malcev-4} is proven in \cite[Theorem 7.1]{zamora-limits}.  In \cite[Section 8]{zamora-limits}, \eqref{conclusion:malcev-1} is obtained by an application of Malcev Theorem, and \eqref{conclusion:malcev-2} follows from \cite[Proposition 9.1]{zamora-limits}. Note that although the results in \cite{breuillard-green-tao, zamora-limits} are stated in terms of ultralimits and non-standard analysis, they are equivalent to the above formulations by \L os's Theorem (see for example \cite[Appendix C]{breuillard-green-tao}). 
\end{proof}

\begin{remark}\label{rem:rank-equality}
    Combining Lemma \ref{lem:malcev-construction} with Lemma \ref{lem:b1-compute} and Proposition  \ref{pro:rank-properties-2}  one obtains
    \begin{equation}\label{eq:rank-string}
   \begin{split}
        \rank (G_i^{\prime} ) & =  \rank (\Gamma _i )  \\
        & = \rank (\tilde {\Gamma}_i) - \rank (\Ker (\Phi _ i)) \\
        & \leq  n.
   \end{split}
    \end{equation}
If equality holds throughout  \eqref{eq:rank-string}, then $\rank (\Ker (\Phi_i) ) = 0$, and since the groups $\tilde{\Gamma}_i$ are torsion-free, that would imply $\Ker (\Phi _ i ) = 0$, so $X$ is simply connected and $\tilde {\Gamma }_i = \Gamma_i$.
\end{remark}

\begin{remark}\label{rem:nss-case}
    If the groups $G_i^{\prime}$ admit no small subgroups, then $\Gamma_i = G_i ^{\prime}$ for $i$ large enough.
\end{remark}

\section{Topological Lemmas}\label{sec:topology}

In general, when two pointed proper metric spaces $(X,p)$, $(Y,q)$ are pointed Gromov--Hausdorff close, there is no global approximation $X \to Y$. However, when a group $G$ acts co-compactly on both spaces and the triples $(X,p,G)$, $(Y,q,G)$ are equivariantly Gromov--Hausdorff close, it is possible to construct a global almost $G$-equivariant approximation $X \to Y$ (see for example \cite[Theorem 4.1]{wang-limit}). 

\begin{lemma}\label{lem:local-to-global-approximation}
For each $\varepsilon > 0 $, there is $\delta > 0 $ such that the following holds. Let $X$, $Y$ be proper geodesic spaces, $G_{X} \leq \iso (X)$, $G_Y \leq \iso (Y) $ discrete groups, $p \in X$, $q \in Y$, a group isomorphism $\varphi : G_X \to G_Y$, and a map $f : B_{10}(p) \to B_{10}(q)$ such that $f(p) = q$ and
\begin{enumerate}[(1)]
    \item $\diam ( X / G_X ),$ $\diam (Y / G_Y )  < 1$. \label{condition:f-1}
    \item  For all $x_1, x_2 \in B_{10}(p)$,  
    \[   \vert d(f(x_1), f(x_2)) - d(x_1, x_2) \vert \leq \delta .                 \]\label{condition:f-2}
    \item  For all $y \in B_1(q)$, there is $x \in B_2 (p)$ with
    \[     d(f(x) ,y) \leq \delta .        
    \]\label{condition:f-2.5}
    \item \color{black} If $x \in B_{10}(p)$, $h \in G_Y$ are such that $hf(x) \in B_{5}(q)$, then 
    \[   \varphi^{-1}(h) x \in B_{10}(p).    \] \label{condition:f-3}
    \item  If $x \in B_{10}(p)$, $g \in G_X$ are such that $gx \in B_{10}(p)$, then 
    \[   d(f(gx), \varphi (g) f(x) ) \leq \delta .    \]\label{condition:f-4}
\end{enumerate}
 Then there is $\tilde {f} : X \to Y$ such that
 \begin{enumerate}[(i)]
     \item     For all $x \in X$, $g \in G_X$, 
     \[         d(  \varphi (g)  \tilde{f} (x)  , \tilde{f} (gx) ) \leq \varepsilon .                  \]\label{conclusion:f-tilde-eq}     
     \item For all $x_1, x_2 \in X$ with $\min \{ d(x_1, x_2) ,  d(\tilde{f} (x_1),\tilde{f} (x_2)) \}  \leq 2 $  one has 
     \[   \vert d ( \tilde {f} (x_1) \tilde{f} (x_2) ) - d(x_1, x_2) \vert \leq \varepsilon.     \] \label{conclusion:f-tilde-1}
     \item  For all $y \in Y$, there is $x \in X$ with $d ( \tilde{f} x , y ) \leq \varepsilon$.\label{conclusion:f-tilde-2}
 \end{enumerate}
 Moreover, if  
 \begin{equation}\label{condition:riemannian-sec-inj}
  Y\text{ is Riemannian,} \hspace{2cm} \vert \sec (Y) \vert \leq 1,   \hspace{2cm}  \inj (Y) \geq 1,   
 \end{equation}
   and $f $ is continuous, then $\tilde{f}$ can be taken to be continuous and equivariant; that is, for all $x \in X$, $g \in G_X$, 
 \begin{equation*}
 \varphi (g) \tilde{f} (x) = \tilde{f} (gx) .  
 \end{equation*}
\end{lemma}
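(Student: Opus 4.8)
The plan is to build $\tilde f$ by a gluing/averaging procedure over the group action, using the local approximation $f$ on $B_{10}(p)$ as the seed. First I would fix a fundamental domain-type subset: using condition \eqref{condition:f-1}, choose a maximal $\tfrac12$-separated subset $\{x_\alpha\}$ of a ball $B_2(p)$ whose $G_X$-translates cover $X$, and let $\{\rho_\alpha\}$ be a Lipschitz partition of unity subordinate to the cover $\{B_1(g x_\alpha)\}_{g\in G_X,\alpha}$, built $G_X$-equivariantly (so $\rho_{(g,\alpha)}(hx)=\rho_{(hg,\alpha)}(x)$). For a point $x\in X$ lying in some $B_1(gx_\alpha)$, the element $g^{-1}x$ lies in $B_3(p)\subset B_{10}(p)$, so $f(g^{-1}x)$ is defined; I would then set, tentatively, $\tilde f(x) := \text{``average''}_{(g,\alpha)}\ \rho_{(g,\alpha)}(x)\,\varphi(g)\,f(g^{-1}x)$. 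Conditions \eqref{condition:f-2}, \eqref{condition:f-4} guarantee that the various candidate values $\varphi(g)f(g^{-1}x)$ appearing with nonzero weight are pairwise within $O(\delta)$ of each other in $Y$, so any reasonable notion of average is well-posed up to $O(\delta)$; condition \eqref{condition:f-3} is what ensures the bookkeeping is consistent, i.e.\ that the translates $\varphi(g)f(g^{-1}x)$ stay in a controlled region so the overlap estimate applies. In a general proper geodesic $Y$ one cannot literally average, so I would instead define $\tilde f(x)$ to be, say, the value $\varphi(g_0)f(g_0^{-1}x)$ for the index $(g_0,\alpha_0)$ maximizing $\rho$; the equivariance conclusion \eqref{conclusion:f-tilde-eq} up to $\varepsilon$ then follows from the equivariance of the partition of unity together with \eqref{condition:f-4}, the distance estimate \eqref{conclusion:f-tilde-1} from \eqref{condition:f-2} and the fact that $x_1,x_2$ being within distance $2$ forces their selected group elements to be compatible (one is within bounded distance of the other, handled via \eqref{condition:f-3}), and the near-surjectivity \eqref{conclusion:f-tilde-2} from \eqref{condition:f-2.5} plus co-compactness \eqref{condition:f-1}.

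For the final, stronger claim under hypothesis \eqref{condition:riemannian-sec-inj}, the point is that $Y$ now has a genuine center-of-mass / barycenter construction: on balls of radius $\leq$ (a definite fraction of) $\inj(Y)=1$, with $|\sec(Y)|\leq 1$, the Riemannian center of mass of a finitely supported probability measure of small diameter is uniquely defined, depends continuously (indeed smoothly) on the weights and points, and is equivariant under isometries. So I would set
\[
\tilde f(x) := \operatorname{com}_Y\Big(\sum_{(g,\alpha)} \rho_{(g,\alpha)}(x)\,\delta_{\varphi(g)f(g^{-1}x)}\Big),
\]
which is well-defined because all the atoms lie in a ball of radius $O(\delta)\ll 1$ by the overlap estimates, is continuous in $x$ because $f$ and the $\rho_{(g,\alpha)}$ are continuous, and is \emph{exactly} equivariant: replacing $x$ by $hx$ relabels the atoms as $\varphi(hg)f(g^{-1}x)=\varphi(h)\big(\varphi(g)f(g^{-1}x)\big)$ with the weights permuted correspondingly by equivariance of the partition of unity, and the center of mass commutes with the isometry $\varphi(h)$. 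The metric estimates \eqref{conclusion:f-tilde-1} and near-surjectivity \eqref{conclusion:f-tilde-2} are inherited from those of $f$ because the center of mass of a measure concentrated near a point $y$ stays $O(\delta)$-close to $y$.

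The main obstacle I expect is the consistency/overlap bookkeeping: one must verify that whenever two indices $(g,\alpha)$, $(g',\alpha')$ contribute to the same point $x$ (or to two nearby points), the corresponding elements $g^{-1}g'$ are ``bounded'' in the sense that $g^{-1}g' x_{\alpha'}$ lands in a fixed ball around $p$, so that conditions \eqref{condition:f-2}--\eqref{condition:f-4} can be chained to conclude $\varphi(g)f(g^{-1}x)$ and $\varphi(g')f(g'^{-1}x)$ are $O(\delta)$-apart. This is precisely where hypothesis \eqref{condition:f-3} is used, and getting the radii to close up (the cover uses balls of radius $1$, the approximation $f$ lives on radius $10$, the relations are tested on radius $5$) requires care in choosing the separation scale of $\{x_\alpha\}$ and the Lipschitz constant of the $\rho_{(g,\alpha)}$; the quantitative dependence of the final $\varepsilon$ on $\delta$ comes out of counting how many indices can overlap at a point, which is bounded because $X/G_X$ has diameter $<1$ and $\{x_\alpha\}$ is $\tfrac12$-separated (a packing bound, using properness). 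A secondary technical point is ensuring the Riemannian center of mass estimates are applied only to measures of diameter within the uniqueness radius, which forces $\delta$ small depending only on the universal constants in \eqref{condition:riemannian-sec-inj}.
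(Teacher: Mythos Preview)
Your approach is essentially the same as the paper's: build an equivariant partition of unity on $X$, push points through $g^{-1}$ into the domain of $f$, and then take either an arbitrary point of the support (general $Y$) or the Riemannian center of mass (under \eqref{condition:riemannian-sec-inj}) of the resulting measure on $Y$. The verification of \eqref{conclusion:f-tilde-eq}--\eqref{conclusion:f-tilde-2} proceeds as you outline.

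One simplification worth noting: the paper dispenses with the separated net $\{x_\alpha\}$ entirely and indexes the partition of unity by $G_X$ alone, setting $\theta_g(x):=\max\{0,2-d(x,gp)\}$ and $\rho_g:=\theta_g/\sum_h\theta_h$. Since $\diam(X/G_X)<1$, the orbit of the single point $p$ already gives a cover by $2$-balls, so no auxiliary net is needed. This also makes your worry about packing bounds disappear: the diameter of $\supp(\mu_x)$ is a \emph{pairwise} estimate---if $\rho_g(x),\rho_h(x)>0$ then $g^{-1}x,h^{-1}x\in B_2(p)$ and condition~\eqref{condition:f-4} gives $d(\varphi(g)f(g^{-1}x),\varphi(h)f(h^{-1}x))\leq\delta$ directly---so the final $\varepsilon$ is a fixed multiple of $\delta$ (the paper gets $4\delta$) with no dependence on the number of overlapping indices. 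Finally, a small misattribution: the overlap estimate uses condition~\eqref{condition:f-4}, not~\eqref{condition:f-3}; condition~\eqref{condition:f-3} is invoked only in the ``reverse'' half of conclusion~\eqref{conclusion:f-tilde-1}, where one assumes $d(\tilde f(x_1),\tilde f(x_2))\leq 2$ and must pull this back to conclude $d(x_1,x_2)\leq 5$ before applying the forward estimate.
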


\begin{proof}
    For each $g \in G_X$, set $\theta _g : X \to \mathbb{R}$ as 
    \[   \theta_g(x) : = \max \{ 0, 2 - d(x, gp) \}.           \]
    Then define $\Theta : X \to \mathbb{R}$ as 
    \[\Theta (x) = \sum_{g \in G_X} \theta_g (x), \]
    which by \eqref{condition:f-1} is always positive.  Also for each $g \in G_X$ define $\rho _g : X \to \mathbb{R}$ as 
    \[   \rho _g (x)  : = \theta _g (x) / \Theta (x),  \]
    and with this, the map $\mu : X \to \prob (Y)$ as
    \[    \mu _x : = \sum_{g \in G_X} \rho _g (x) [[  \varphi (g) ( f ( g^{-1} x ) ) ]],          \]
    where for each $y \in Y$, we denote by $[[y]] \in \prob (Y)$ the delta measure supported at $y$.  Notice that $\rho_g (x) \neq 0$ means that $g^{-1}x \in B_1(p)$, so $\mu _x$ is well defined even if $f(g^{-1}x)$ is not for most $g \in G_X$. It follows from the definitions that  
    \[  \rho _g (hx) = \rho_{h^{-1}g} (x)  \text{  for all }x \in X,\,  g,h \in G_X. \]
    Then 
    \begin{equation}\label{eq:mu-equivariant}
        \begin{split}
    \mu _{hx}  &  =   \sum_{g \in G_X} \rho _g (hx) [[  \varphi (g) ( f ( g^{-1}h x ) ) ]]  \\
    &   =   \sum_{g \in G_X} \rho _{h^{-1 }g} ( x) [[  \varphi (g) ( f ( (h^{-1 }g)^{-1} x ) ) ]] \\
        &   =  \sum _{u \in G_X}  \rho _{ u } (x) [[  \varphi (h u ) (f (u ^{-1} x)  ) ]] \\
        & = \varphi ( h) _{\ast} \sum _{u \in G_X}  \rho _{ u } (x) [[  \varphi ( u ) (f (u ^{-1} x)  ) ]] \\
        & = \varphi (h ) _{\ast} \mu _x ,             
        \end{split}
    \end{equation}
    so $\mu $ is an equivariant map. Also, if $x \in X$,  $g, h \in G_X$ are such that  $\rho _{g} (x), \rho _{h} (x) > 0 $, then $g^{-1}(x), h^{-1} (x) \in B_2(p)$, and by  \eqref{condition:f-4} we have
    \begin{align*}
         d( \varphi (g )  f(  &g^{-1} x ) ,  \varphi (h ) f(  h^{-1} x )  )    =   d (  f(  g^{-1} x ) ,  \varphi (g^{-1} h) f(  h^{-1} x ) ) \\
         & =  d (  f(  ( g^{-1} h  ) h ^{-1 } x ) ,  \varphi (g^{-1} h) f(  h^{-1} x ) ) \\
         & \leq \delta ,
    \end{align*}
    so 
    \begin{equation}\label{eq:diameter-supp-mu}
    \diam ( \supp (\mu _x )) \leq \delta \text{ for all }x \in X.    
    \end{equation}
    Then define $\tilde {f} : X \to Y$ as any map with
    \begin{equation}\label{eq:f-tilde-def}
    \tilde{f} (x) \in \supp (\mu _x ).  
    \end{equation}
    By \eqref{eq:diameter-supp-mu},  different choices for $\tilde{f}$ yield functions that are uniformly $\delta $-close, and \eqref{conclusion:f-tilde-eq} follows from  \eqref{eq:mu-equivariant}. Now if $x_1, x_2 \in X$ are such that  $ d( x_1, x_2 ) \leq 5,$ take  $g_1, g_2 \in G_X$ with $g^{-1}_1x_1 , g^{-1}_2x_2 \in B_1(p)$. Then $g_1^{-1} x_2 \in B_{10} (p)$ so by  \eqref{condition:f-2} and \eqref{condition:f-4} we have  
    \begin{align*}
     d( \varphi (g_1) f(&g_1^{-1} x_1) , \varphi (g_2) f(g_2^{-1}x_2)  )   =  d(   f ( g_1 ^{-1} x_1  ) , \varphi ( g_1 ^{-1}g_2  ) f(  g_2 ^{-1} x_2 )   )\\
    & =  d (  f ( g_1 ^{-1} x_1 ) , f (  g_1 ^{-1} x_2 ) ) \pm \delta \\
    & = d( g_1^{-1} x_1 , g_1^{-1}x_2 )  \pm 2 \delta \\
    & = d(x_1, x_2 ) \pm 2 \delta .
    \end{align*}
    Combining this with \eqref{eq:diameter-supp-mu} we obtain
    \begin{equation}\label{eq:f-tilde-gh}
        \vert d ( \tilde{f} (x_1) , \tilde{f} (x_2)  ) - d(x_1, x_2) \vert \leq 4 \delta .   
    \end{equation}   
    On the other hand, if $x_1, x_2$ are such that $d( \tilde{f} (x_1), \tilde{f}(x_2)) \leq 2$, then there are $g_1, g_2 \in G_X$ such that $\rho_{g_1}(x_1), \rho _{g_2 } (x_2) > 0 $, and 
    \[    d(  \varphi ( g_1 ) f(g_1^{-1}x_1)     , \varphi (g_2) f(g_2 ^{-1} x_2) ) \leq 3.                            \]
    Using \eqref{condition:f-2} and the fact that $f(p)  = q$, this implies 
    \[        d(  \varphi ( g_2^{-1}  g_1 )  f( g_1^{-1}x_1), q )  \leq 5, \]
    so by \eqref{condition:f-3}, we get $ g_2^{-1} g_1 g_1^{-1} x_1 = g_2^{-1}x_1 \in B_{10}(p)$. Hence by \eqref{condition:f-2}, 
    \begin{align*}
    d( x_1, x_2 ) & =  d ( g_2^{-1}  x_1 , g_2^{-1} x_2  ) \\
     & \leq d(  f(  g_2 ^{-1} x_1  ) , f ( g_2 ^{-1} x_2 )  )  +  \delta \\
     & \leq d(  \varphi ( g_2 ^{-1}  g_1 ) f(  g_1 ^{-1} x_1  ) , f ( g_2 ^{-1} x_2 )  )  + 2 \delta   \\
     & \leq 5,
    \end{align*}
    so  \eqref{eq:f-tilde-gh} applies, proving  \eqref{conclusion:f-tilde-1}.  For $y \in Y$, by  \eqref{condition:f-1} there is $g \in G_X$ with 
    \[ \varphi (g) ^{-1} y \in B_1(q) .\] 
    By \eqref{condition:f-2.5}, there is $x \in B_2 (p)$ with $d(f(x), \varphi (g)^{-1} y ) \leq \delta$.  Since $\rho_g(gx) > 0$, we have
    \[    d ( \tilde {f} (gx), y ) \leq d(  \varphi (g) f( g^{-1}g x   ) , y ) + \delta \leq 2 \delta , \]
  establishing  \eqref{conclusion:f-tilde-2}. 
    
    Now assume $Y$ satisfies \eqref{condition:riemannian-sec-inj} and $f$ is continuous. Then instead of defining $\tilde{f} $ by  \eqref{eq:f-tilde-def},  define $\tilde{f} (x) \in Y $ to be the center of mass of $\mu_x$. As the functions $\rho _g$ are all continuous, $\mu _ x$ depends continuously and equivariantly on $x$, so the result follows.
\end{proof}

When two geodesic spaces $X$ and $Y$ are Gromov--Hausdorff close, very little can be said about their topologies (see for example \cite{ferry-okun}). However, if $X$ satisfies nice local contractibility properties and admits a continuous Gromov--Hausdorff approximation $X \to Y$, some of its topological features are dominated by the corresponding ones of $Y$ (see for example \cite{petersen}). Lemmas \ref{lem:transfer-trivial-homology} and \ref{lem:transfer-simply-connected} below are in the spirit of this phenomenon.

\begin{lemma}\label{lem:transfer-trivial-homology}
 For each $m \in \mathbb{N}$, there is $\varepsilon > 0 $ such that the following holds. Let $X$, $Y$ be proper geodesic spaces and maps $f: X \to Y$, $h : Y \to X$ such that 
 \begin{enumerate}[(1)]
     \item For each $r \in ( 0 , 1 ]$, $x \in X$, the ball $B_r(x)$ is contractible in $B_{r + \varepsilon }(x)$.\label{condition:contractibility-of-balls}
     \item  For all $y_1, y_2 \in Y$ with $d(y_1, y_2) \leq 1$, one has $d( hy_1, hy_2 ) \leq d(y_1, y_2 ) + \varepsilon $.\label{condition:h-non-expanding}
     \item  For all $x \in X$, one has $d( h (f (x)) , x ) \leq \varepsilon$.\label{condition:hf-id}
     \item  $f$ is continuous.\label{condition:f-continuous}
 \end{enumerate}
 Then the map $f_{\ast} : H_m (X) \to H_m (Y)$ is injective. In particular, if $H_m(Y) $ is trivial, then so is $H_m (X)$. 
\end{lemma}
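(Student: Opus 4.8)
The plan is to prove directly that $f_*$ has trivial kernel, which (being a homomorphism of homology groups) is the same as injectivity, and from which the last sentence of the lemma is immediate. So let $z$ be a singular $m$-cycle in $X$ with $f_\#(z)$ nullhomologous in $Y$; I will produce an $(m+1)$-chain in $X$ whose boundary is $z$, so that $[z]=0$ in $H_m(X)$. Throughout, $\varepsilon=\varepsilon(m)$ is a small constant pinned down at the end.

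First I would normalize the simplices. Replacing $z$ by an iterated barycentric subdivision $\mathrm{Sd}^k z$ (which is chain-homotopic to $z$, so $[\mathrm{Sd}^k z]=[z]$) and a fixed filling $c$ with $\partial c=f_\#(z)$ by $\mathrm{Sd}^k c$ (which still fills $\mathrm{Sd}^k z$, since $\mathrm{Sd}$ commutes with $\partial$ and with $f_\#$), I may assume every singular simplex of $z$ and of $c$ has image of diameter less than $\varepsilon$. Then, by the standard device of modeling a singular cycle together with a chain bounding it by a $\Delta$-complex, I would produce a finite simplicial pair $L\subseteq K$, continuous maps $\Phi_0:L\to X$ and $\Psi:K\to Y$ with $\Psi|_L=f\circ\Phi_0$ (here hypothesis \eqref{condition:f-continuous} is what makes $f\circ\Phi_0$ a genuine continuous map), a simplicial $m$-cycle $\zeta$ in $L$ with $(\Phi_0)_\#\zeta=z$, and a simplicial $(m+1)$-chain $\omega$ in $K$ with $\partial\omega=\zeta$ and $\Psi_\#\omega=c$; after one further simplicial subdivision I may assume every simplex of $K$ has $\Psi$-image, and every simplex of $L$ has $\Phi_0$-image, of diameter less than $\varepsilon$.

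The heart of the argument is to extend $\Phi_0$ to a continuous map $\tilde\Phi:K\to X$, built skeleton by skeleton. On a vertex $v$ set $\tilde\Phi(v)=\Phi_0(v)$ if $v\in L$ and $\tilde\Phi(v)=h(\Psi(v))$ otherwise; since the endpoints of any edge of $K$ have $\Psi$-images within $\varepsilon$, hypothesis \eqref{condition:h-non-expanding} keeps their $h$-images within $2\varepsilon$, and hypothesis \eqref{condition:hf-id} gives $d(\Phi_0(v),h(\Psi(v)))=d(\Phi_0(v),h(f(\Phi_0(v))))\le\varepsilon$ for $v\in L$, so any two vertices of a common simplex have $\tilde\Phi$-images within $O(\varepsilon)$. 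Inductively, suppose $\tilde\Phi$ is defined on the $(k-1)$-skeleton so that the $\tilde\Phi$-image of $\partial\Delta$ has diameter at most $R_k\le 1$ for every $k$-simplex $\Delta$; that image lies in some metric ball of radius $R_k$, which by hypothesis \eqref{condition:contractibility-of-balls} is contractible inside the concentric ball of radius $R_k+\varepsilon$, and a nullhomotopy of the attaching sphere inside that larger ball is exactly an extension of $\tilde\Phi$ over $\Delta$, with a new bound $R_{k+1}\le C(m)(R_k+\varepsilon)$. Choosing $\varepsilon(m)$ small enough that this recursion keeps $R_k\le 1$ through $k=m+1$, the induction produces $\tilde\Phi:K\to X$ with $\tilde\Phi|_L=\Phi_0$; then $\partial(\tilde\Phi_\#\omega)=\tilde\Phi_\#(\partial\omega)=\tilde\Phi_\#\zeta=(\Phi_0)_\#\zeta=z$, so $z$ bounds in $X$ and $[z]=0$.

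I expect the main obstacle to be the reduction in the second paragraph: one must package $z$, its pushforward $f_\#(z)$, and the bounding chain $c$ into a single simplicial pair $(K,L)$, with $L$ carrying $z$ and $K\setminus L$ carrying the filling, keeping all face identifications and integer multiplicities consistent so that $\Psi|_L=f\circ\Phi_0$ holds on the nose — this coherence across shared faces is precisely what the $\Delta$-complex model buys, and it is where continuity of $f$ is used. By contrast the skeletal extension is routine once the constants $C(m)$, hence the admissible $\varepsilon(m)$, are organized; the dependence of $\varepsilon$ on $m$ is exactly what keeps the iterated radii $R_k$ below the scale $1$ at which \eqref{condition:contractibility-of-balls} is available, up to dimension $m+1$.
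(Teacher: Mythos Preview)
Your skeletal extension via local contractibility is exactly the paper's mechanism, and the recursion on radii is fine. The difference is organizational: the paper works directly with chains, defining $h_\sharp\tau:=[h\tau(e_0),\ldots,h\tau(e_k)]$ on small singular simplices (built inductively just as you do), gets $\partial h_\sharp\psi=h_\sharp f_\sharp\varphi$, and then runs a \emph{second} skeletal argument to produce a chain homotopy between $\varphi$ and $h_\sharp f_\sharp\varphi$. You try to collapse these two steps into a single extension problem over a pair $(K,L)$, which is attractive but is precisely where your argument has a real gap.

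The requirement $\partial\omega=\zeta$ in $K$ cannot in general be arranged by the ``standard device'' you invoke. Take $z=\sigma_1-\sigma_2$ with $\partial\sigma_1=\partial\sigma_2$ and $f\sigma_1=f\sigma_2$; then $f_\#(z)=0$ and one may choose $c=0$. Any $K$ built from $c$ contributes nothing, while $\zeta=\sigma_1-\sigma_2$ is a nonzero top-dimensional cycle in the $m$-dimensional complex $L$ and hence cannot bound there; so no $\omega$ with $\partial\omega=\zeta$ exists in such a $K$. The underlying issue is that $z$ lives in $X$ while $c$ lives in $Y$, and $f$ may identify distinct simplices of $z$; the na\"ive $\Delta$-complex model of ``cycle plus bounding chain'' presumes both live in the \emph{same} space. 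The fix is to insert a triangulated mapping cylinder of the simplicial map $\phi:L\to M$ induced by $f$ (where $M$ models $c$ together with all $f\sigma$'s), and to set $\omega=\omega_M\pm P\zeta$ with $P$ the prism operator; the cylinder simplices have $\Psi$-images that are degenerate, hence small, so your extension step goes through unchanged. Note, however, that this prism chain is exactly the paper's ``homotopy between $\varphi$ and $h_\sharp f_\sharp\varphi$'' in disguise, so once repaired your one-step organization and the paper's two-step argument coincide. You should also drop the demand $\Psi_\#\omega=c$, which fails on the prism cells and is anyway irrelevant: all you need is that every simplex carrying $\omega$ has small $\Psi$-image.
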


Ideally, we would like the proof of Lemma \ref{lem:transfer-trivial-homology} to go like this: $h \circ f \approx \Id _X $, thus the identity map $H_m (X) \to H_m (X)$ factors as 
\[      H_m (X) \xrightarrow{f_{\ast}} H_m(Y) \xrightarrow{h_{\ast}} H_m (X)  .  \]
This argument fails as $h$ is not necessarily continuous, so $h \circ f$ is not exactly  homotopic to the identity on $X$. However, condition \eqref{condition:contractibility-of-balls} allows us to get around these issues. We now turn to the actual proof (cf. \cite[Proposition 1.3]{yamaguchi}).

\begin{proof}
    Assume $\varepsilon < \frac{1}{4m + 3} $.  For each $x_0, \ldots , x_k \in X$ with $\diam ( \{ x_0, \ldots , x_k  \} ) \leq 3 \varepsilon$ and $k \leq m+1$, we will construct a singular $k$-simplex $[x_0, \ldots , x_k]$ in $X$. For each $x_0 \in X$, we denote by $[x_0]$ the singular $0$-simplex consisting of the point $x_0$. For each $x_0, x_1 \in X$ with  $ d(x_0,x_1) \leq 3 \varepsilon ,$  let  $[x_0, x_1]$ be a geodesic from $x_0$ to $x_1$ seen as a singular 1-simplex. 

    Fix $k \leq m $ and assume by induction that for each $j \in \{ 1, \ldots , k \}$, and any points $x_0, \ldots , x_j \in X$ with $\diam (\{ x_0, \ldots , x_j \}) \leq 3 \varepsilon $, there is a singular $j$-simplex $[x_0, \ldots , x_j] $ in $X$ contained in $B_{4j \varepsilon }(x_0)$ and compatible with the lower dimensional simplices; that is, 
    \begin{equation}\label{eq:compatibility-1}
        \partial [ x_0, \ldots , x_j ] = \sum _{\ell = 0 }^j \,  (-1) ^ {\ell} \, [x_0, \ldots , \hat{x}_{\ell} , \ldots , x_j]    .  
    \end{equation}        
    Take $ x_0 , \ldots , x_{k + 1} \in X $ with $\diam (\{ x_0, \ldots , x_{k+1} \} ) \leq 3 \varepsilon$. Then  the  already constructed simplices $[x_0, \ldots , \hat{x}_j , \ldots , x_{k+1} ]$ are contained in $B_{(4k + 3)\varepsilon}(x_0)$ and by  \eqref{eq:compatibility-1} they satisfy 
    \begin{equation*}
        \sum_{j = 0} ^{k+1} \, (-1) ^{j} \, \partial  \,  [x_0, \ldots , \hat{x}_j , \ldots  , x_{k+1} ] =  0      
    \end{equation*}        
    so by condition  \eqref{condition:contractibility-of-balls} there is a singular $( k + 1 )$-simplex $[x_0, \ldots , x_{k+1}] $ contained in $B_{4(k+1)\varepsilon}(x_0)$ that restricted to its facets gives the ones already constructed; that is, 
    \begin{equation}\label{eq:h-chain}
        \partial [x_0, \ldots , x_{k+1} ] =  \sum_{j=0}^{k+1} \,  (-1)^j \,   [ x_0, \ldots , \hat{x}_j, \ldots , x_{k+1} ]  .     
    \end{equation}            
    For each singular $k$-simplex $\tau : \Delta ^k \to Y $, we define $h_{\sharp} ( \tau ) $ to be the singular $k$-simplex in $X$ given by $[ h \tau (e_0), \ldots , h \tau (e_k)  ]$, where $e_0, \ldots , e_k$ are the vertices of $\Delta ^k$. By condition \eqref{condition:h-non-expanding} this can be done provided $\diam ( \tau (\Delta ^k ) ) \leq \varepsilon $ and $k \leq m+1$. This definition expands linearly to all chains of the form $\psi = \sum_{\tau \in T} n_{\tau} \tau $ where  $T$ is a finite set of singular $k$-simplices of diameter $\leq \varepsilon $ and  $n _{\tau } \in \mathbb{Z}$ for each $\tau$,  as 
    \[ h_{\sharp} (\psi) : = \sum_{\tau \in T} n_{\tau} h_{\sharp }(\tau ). \] 
    Notice that by  \eqref{eq:h-chain}, we also have $ \partial h_{\sharp} (\psi ) = h_{\sharp} (\partial \psi ) $.

     Consider an $m$-cycle  $\varphi = \sum_{\sigma \in S} n_{\sigma} \sigma$ in $X$ representing a homology class in $\Ker (f_{\ast})$, where $S$ is a finite set of singular $m$-simplices, and $n_{\sigma} \in \mathbb{Z} $ for each $\sigma \in S$. Our goal is to show that $\varphi$ is a boundary.  By \eqref{condition:f-continuous}, the cycle 
     \[ f_{\sharp} ( \varphi )  = \sum_{\sigma \in S} n_{\sigma } f _{\sharp } (\sigma ) : = \sum_{\sigma \in S} n_{\sigma } (f \circ \sigma ) \] 
     is well defined and by hypothesis is a boundary. Take $\psi$ a singular $(m+1)$-chain in $Y$ with $\partial \psi = f_{\sharp} ( \varphi ) $. By applying barycentric subdivision simultaneously on both sides, we can assume the diameter of any singular simplex featured in the chains $\varphi$ and $\psi $ has diameter $\leq \varepsilon$, so $h_{\sharp} ( \psi )$ is well defined.

    Then by condition \eqref{condition:hf-id}, the boundary 
    \[ \partial h_{\sharp }(\psi ) = h_{\sharp } ( \partial \psi ) = h_{\sharp } f_{\sharp } ( \varphi  )  = \sum_{\sigma \in S} n_{\sigma } h_{\sharp } f_{\sharp } ( \sigma ) \]
    is uniformly close to $\varphi$. That is, for each $\sigma \in S$, the maps $\sigma ,  \, h_{\sharp} f_{\sharp} (\sigma ) : \Delta ^ m \to X $ are at uniform distance $\leq 8 m  \varepsilon .$ Thus if $\varepsilon $ is small enough (depending on $m$) one can (inductively on the skeleta just like at the beginning of the proof)  construct a homotopy  between $ \varphi $ and $h_{\sharp } f_{\sharp } ( \varphi )$. This implies that $\varphi$ is also a boundary. 

\end{proof}

\begin{lemma}\label{lem:transfer-simply-connected}
 Let $X$, $Y$ be proper geodesic spaces and maps $f: X \to Y$, $h : Y \to X$ that satisfy conditions \eqref{condition:contractibility-of-balls}, \eqref{condition:h-non-expanding}, \eqref{condition:hf-id},  \eqref{condition:f-continuous} from Lemma \ref{lem:transfer-trivial-homology} with $\varepsilon = 1/100$. Then $f_{\ast} : \pi_1 (X) \to \pi_1(Y)$ is injective. In particular, if $Y$ is simply connected then so is $X$. 
\end{lemma}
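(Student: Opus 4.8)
The plan is to mimic the proof of Lemma \ref{lem:transfer-trivial-homology}, replacing singular $m$-cycles by loops and singular $(m+1)$-chains bounding them by nullhomotopies (i.e. maps from a disk). The key technical device, exactly as before, is the inductive construction of ``filled-in simplices'' $[x_0,\dots,x_k]$ in $X$ for tuples of points that are $3\varepsilon$-close, using the contractibility condition \eqref{condition:contractibility-of-balls}; since $\varepsilon = 1/100$ is small, all the diameter bookkeeping of the form $4k\varepsilon$ for $k\le 2$ goes through comfortably. This gives, just as in the previous lemma, a chain map $h_\sharp$ defined on chains built out of simplices of diameter $\le \varepsilon$, satisfying $\partial h_\sharp = h_\sharp \partial$, together with the fact that $h_\sharp f_\sharp(\sigma)$ is uniformly close (distance $\le 8\varepsilon$ or so, hence $\ll 1$) to $\sigma$ on $\Delta^k$ for $k\le 2$.

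First I would take a loop $\gamma : S^1 \to X$ representing an element of $\Ker(f_\ast)$; after subdividing $S^1$ finely we may view $\gamma$ as a concatenation of short geodesic edges $[\gamma(t_0),\gamma(t_1)],\dots$ each of diameter $\le \varepsilon$, so that $\gamma$ is homotopic in $X$ to the corresponding ``edge-path'' loop $\bar\gamma$ built from our canonical $1$-simplices. Since $f$ is continuous, $f\circ\gamma$ is a genuine loop in $Y$, null-homotopic by hypothesis; pick a continuous $H : \Delta^2 \to Y$ (a map from a triangulated disk, obtained by coning off or by a simplicial approximation) witnessing this, and after barycentric subdivision assume every $2$-simplex and $1$-simplex appearing in $H$ and in $f_\sharp(\bar\gamma)$ has diameter $\le\varepsilon$. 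Then $h_\sharp(H)$ is a well-defined singular $2$-chain in $X$ with $\partial h_\sharp(H) = h_\sharp f_\sharp(\bar\gamma)$, i.e. $h_\sharp f_\sharp(\bar\gamma)$ bounds; by condition \eqref{condition:hf-id} the $1$-chain $h_\sharp f_\sharp(\bar\gamma)$ is vertex-by-vertex within $8\varepsilon$ of $\bar\gamma$, so the contractibility condition lets me build a homotopy (again inductively on skeleta, in dimensions $\le 2$) between $\bar\gamma$ and $h_\sharp f_\sharp(\bar\gamma)$ as loops. Chaining these homotopies shows $\gamma$ is null-homotopic, hence $f_\ast$ is injective; if $Y$ is simply connected then $\Ker(f_\ast) = \pi_1(X)$, so $\pi_1(X)$ is trivial.

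The main subtlety — as opposed to the homology case — is keeping track of \emph{basepoints} and ensuring that all the auxiliary homotopies fix (or move by a controlled amount, then correct) a basepoint, so that the composite is a based null-homotopy rather than merely a free one; this is routine since everything takes place inside balls of radius $O(\varepsilon)$, which are contractible in slightly larger balls, but it is the place where care is needed. A clean way to package this is to invoke the fact that a based loop is null-homotopic iff it is freely null-homotopic together with the basepoint-change isomorphism, or simply to carry the basepoint through the skeletal induction. I expect this basepoint bookkeeping, rather than any new geometric input, to be the only real obstacle; all the quantitative estimates are inherited verbatim from Lemma \ref{lem:transfer-trivial-homology} with $m$ effectively replaced by $1$ and the safe choice $\varepsilon = 1/100$.
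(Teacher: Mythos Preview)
Your proposal is correct and follows essentially the same route as the paper: take a nullhomotopy $\psi:\mathbb{D}\to Y$ of $f\circ\gamma$, triangulate finely, replace each $2$-simplex by $[h\psi(a),h\psi(b),h\psi(c)]$ in $X$ via the construction of Lemma~\ref{lem:transfer-trivial-homology}, and then use condition~\eqref{condition:contractibility-of-balls} once more to homotope the resulting boundary loop back to $\gamma$. One small caution: you phrase the middle step in homological language (``$h_\sharp(H)$ is a $2$-chain with $\partial h_\sharp(H)=h_\sharp f_\sharp(\bar\gamma)$, i.e.\ it bounds''), but for $\pi_1$ what you actually need---and what the paper states explicitly---is that these $2$-simplices glue along their shared edges to a \emph{continuous map} $\mathbb{D}\to X$; this holds because each edge $[h\psi(a),h\psi(b)]$ is constructed once, independently of which triangle it borders, so the assembly is automatic.
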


\begin{proof}
    The proof is essentially the same as for Lemma \ref{lem:transfer-trivial-homology} (cf. \cite[Theorem 2.1]{sormani-wei}). Given a loop $\varphi : \mathbb{S}^1 \to X$ representing an element of $\Ker (f_{\ast})$, there is a disk $\psi : \mathbb{D} \to Y  $ with $\psi \vert _{ \mathbb{S}^1} = f \circ \varphi$. Triangulate $\mathbb{D}$ in such a way that the image of each face of the triangulation under $\psi$ has diameter $\leq 1/100$. Then for each face with vertices $a, b, c \in \mathbb{D}$, construct the singular 2-simplex $[ h(\psi (a)) , h(\psi (b)) , h(\psi (c) )  ] $ in $X$ as in the proof of Lemma \ref{lem:transfer-trivial-homology}. Putting these simplices together, we obtain a continuous map $h_{\sharp} \psi : \mathbb{D} \to X $ with  $h_{\sharp } \psi \vert _{\mathbb{S}^1 }$ uniformly close to $\varphi$. Applying condition \eqref{condition:contractibility-of-balls} once more we obtain that $\varphi$ is homotopic to $h_{\sharp} \psi \vert _{\mathbb{S}^1}$, hence nullhomotopic.
\end{proof}

\begin{remark}
    At first, condition \eqref{condition:f-continuous} in Lemma \ref{lem:transfer-trivial-homology}  may seem like an unnecessary technicality. However, simple examples such as Berger metrics on $\mathbb{S}^3$ converging to a round metric on $\mathbb{S}^2$ show that this condition is actually necessary. 
\end{remark}

\begin{remark}
    With essentially the same proof, condition \eqref{condition:contractibility-of-balls} in lemmas \ref{lem:transfer-trivial-homology} and \ref{lem:transfer-simply-connected} can be replaced by the existence of a contractibility function in the sense of \cite{petersen} (on which $\varepsilon$ depends). Since we will not need such version, we leave it as an exercise for the reader.
\end{remark}

\color{black}

The following lemma is a basic result of algebraic topology.

\begin{lemma}\label{lem:contractible}
Let $X$ be a simply connected topological manifold of dimension $n$. If $H_m (X) = 0$ for all $m \in \{ 1, \ldots, n\} $, then $X$ is contractible.
\end{lemma}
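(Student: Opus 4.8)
The plan is to prove that $X$ is contractible by showing that all its homotopy groups vanish and then invoking Whitehead's theorem, or — since $X$ is a manifold — by invoking the (homotopy) Hurewicz theorem together with the Whitehead theorem. Concretely, first I would observe that $X$ is a CW-complex: an $n$-dimensional topological manifold has the homotopy type of a CW-complex (by the work of Kirby--Siebenmann, or since it is an ANR it is dominated by a CW-complex, and being simply connected with finitely generated homology it is homotopy equivalent to one). Having a CW-structure is what makes Whitehead's theorem applicable at the end.

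Next I would run the Hurewicz theorem inductively. Since $X$ is simply connected, $\pi_1(X) = 0$. The hypothesis gives $H_1(X) = \cdots = H_n(X) = 0$, and since $X$ is an $n$-manifold (possibly non-compact, possibly with infinite-dimensional-looking issues, but in our application it will be closed), $H_m(X) = 0$ for $m > n$ as well: if $X$ is closed and orientable then $H_n(X) = \mathbb{Z} \neq 0$ would contradict the hypothesis, so either $X$ is non-compact (hence $H_m(X) = 0$ for $m \geq n$ by the fact that a connected non-compact $n$-manifold has homological dimension $< n$) or $X$ is closed non-orientable (then $H_n(X;\mathbb{Z}) = 0$ automatically but $H_{n-1}$ has torsion — but here $H_{n-1}(X) = 0$ is assumed, and by Poincaré duality with $\mathbb{Z}/2$ coefficients a closed non-orientable manifold has $H_{n-1}(X;\mathbb{Z})$ containing $\mathbb{Z}/2$, contradiction, unless $n=1$). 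In all admissible cases $\widetilde{H}_*(X) = 0$. Then by the Hurewicz theorem, applied repeatedly: $\pi_1 = 0$ and $\widetilde H_* = 0$ force $\pi_2(X) \cong H_2(X) = 0$, then $\pi_3(X) \cong H_3(X) = 0$, and inductively $\pi_m(X) = 0$ for all $m \geq 1$. Thus $X$ is weakly contractible.

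Finally, since $X$ has the homotopy type of a CW-complex and all its homotopy groups vanish, the constant map $X \to \{\mathrm{pt}\}$ is a weak homotopy equivalence, hence a homotopy equivalence by Whitehead's theorem; therefore $X$ is contractible.

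The main obstacle — really the only subtle point — is the interplay between the manifold hypothesis and the vanishing of homology in top degrees: one must rule out the orientable closed case (where $H_n(X) = \mathbb{Z}$ would violate the hypothesis, so this case simply cannot occur under the assumptions) and the non-orientable or non-compact cases (where $\mathbb{Z}/2$-Poincaré duality, or the fact that open manifolds have cohomological dimension $< n$, gives the vanishing of $H_m(X)$ for $m \geq n$). Once $\widetilde H_*(X) = 0$ entirely, the Hurewicz/Whitehead machine runs without difficulty. (Note that in the intended application $X$ will be a closed aspherical-candidate manifold, and the lemma is invoked precisely to derive that its universal cover is contractible; there $X$ is non-compact if its fundamental group is infinite, which is the relevant situation, so the top-degree vanishing is automatic.)
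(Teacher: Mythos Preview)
Your proof is correct and follows essentially the same route as the paper: show $\widetilde{H}_*(X) = 0$ in all degrees (the paper cites \cite[Proposition 3.29]{hatcher}), apply Hurewicz inductively to kill all homotopy groups, and conclude via Whitehead's theorem using that $X$ has the homotopy type of a CW-complex (the paper cites Milnor \cite{milnor-cw}). Your case analysis for top-degree homology is more elaborate than needed---$H_m(X)=0$ for $m>n$ holds for any $n$-manifold, and the closed non-orientable case is vacuous since a simply connected manifold is always orientable---but this does not affect correctness.
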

\begin{proof}
    By \cite[Proposition 3.29]{hatcher} all homology groups of $X$ vanish, so by Hurewicz Theorem \cite[Theorem 2.1]{hutchings} and induction all homotopy groups vanish as well. By the work of Milnor \cite[Corollary 1]{milnor-cw}, $X$ is homotopy equivalent to a CW-complex and Whitehead Theorem \cite[Theorem 4.5]{hatcher} applies, so  $X$ is contractible.
\end{proof}

As mentioned in Section \ref{subsec:outline}, in order to prove that a closed manifold is homeomorphic to an infranilmanifold, it is enough to verify that it is aspherical and it has the correct fundamental group.   
The following argument was extracted from \cite{kapovitch}.

\begin{theorem}\label{thm:borel-nilpotent}
    Let $X$ be a closed aspherical manifold with $\pi_1(X)$ virtually nilpotent. Then $X$ is homeomorphic to an infranilmanifold.
\end{theorem}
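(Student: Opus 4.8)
## Proof Proposal for Theorem \ref{thm:borel-nilpotent}

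The plan is to combine three ingredients: torsion-freeness of the fundamental group of a closed aspherical manifold, the classical fact that every torsion-free virtually nilpotent group is the fundamental group of an infranilmanifold, and topological rigidity (the Borel conjecture) for closed aspherical manifolds with such fundamental groups. Write $\Gamma := \pi_1(X)$.

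First I would observe that $\Gamma$ is torsion-free. Since $X$ is aspherical and a closed $n$-manifold, it is a $K(\Gamma,1)$, so $H^k(\Gamma; M) \cong H^k(X;M)$ for every $\Gamma$-module $M$, and this vanishes for $k > n$; hence $\mathrm{cd}_{\mathbb{Z}}(\Gamma) \le n < \infty$. A group of finite cohomological dimension is torsion-free, since cohomological dimension does not increase under passing to subgroups while nontrivial finite cyclic groups have infinite cohomological dimension. (In fact $\Gamma$ is a Poincar\'e duality group of formal dimension $n$.)

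Second, I would realize $\Gamma$ geometrically. By the structure theory of torsion-free virtually nilpotent groups, i.e. of almost crystallographic groups (Auslander, Lee--Raymond, building on Malcev's embedding theorem), there exist a simply connected nilpotent Lie group $G$ and a compact group $K$ of automorphisms of $G$ such that $\Gamma$ embeds as a torsion-free discrete subgroup of $G \rtimes K$ acting freely, properly discontinuously and cocompactly on $G$. Concretely, a finite-index normal torsion-free nilpotent subgroup $N \trianglelefteq \Gamma$ sits as a lattice in its Malcev completion $G$, and one upgrades the extension $1 \to N \to \Gamma \to \Gamma/N \to 1$ to an action of $\Gamma$ on $G$ by affine automorphisms, even though this extension need not split. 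The quotient $M := G/\Gamma$ is then an infranilmanifold in the sense recalled in the introduction; since $G$ is contractible and $\Gamma$ acts freely and properly discontinuously, $M$ is a closed aspherical manifold with $\pi_1(M) \cong \Gamma$.

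Third, I would apply topological rigidity. Because $X$ and $M$ are both aspherical with isomorphic fundamental group, any map realizing an isomorphism $\pi_1(X) \cong \pi_1(M)$ induces isomorphisms on all homotopy groups and is therefore a homotopy equivalence $g : X \to M$; in particular $\dim X = \dim M$. As $\Gamma$ is finitely generated and virtually nilpotent, it is virtually poly-$\mathbb{Z}$, so the Farrell--Jones conjecture holds for $\Gamma$, and with it the Borel conjecture for the closed aspherical manifold $M$: every homotopy equivalence from a closed manifold to $M$ is homotopic to a homeomorphism. Applying this to $g$ yields a homeomorphism $X \cong M$, an infranilmanifold of dimension $n$. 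In a complete write-up the step demanding the most care is the second one, namely extracting the affine $\Gamma$-action on the Malcev completion $G$ and checking freeness and cocompactness; this is entirely classical, and the genuinely deep (black-box) input is the topological rigidity used in the last step.
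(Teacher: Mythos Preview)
Your proof is correct and follows the same two-step route as the paper: produce an infranilmanifold $M$ with $\pi_1(M)\cong\Gamma$ (the paper simply cites Lee--Raymond; you additionally spell out the torsion-freeness via finite cohomological dimension, which is needed for that citation to apply), then invoke the Borel conjecture to upgrade the resulting homotopy equivalence to a homeomorphism. One small point where the paper is more careful: the implication ``Farrell--Jones $\Rightarrow$ Borel'' goes through surgery theory and so only directly covers dimensions $\geq 5$; the paper accordingly cites Perelman (geometrization) in dimension $3$, Freedman--Quinn in dimension $4$ (virtually poly-$\mathbb{Z}$ groups are good in Freedman's sense), and Farrell--Hsiang in dimensions $\geq 5$, and you should make the same distinction rather than appealing to Farrell--Jones alone.
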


\begin{proof}
    By  the work of Lee--Raymond \cite{lee-raymond}, there is an infranilmanifold $M$ with $\pi_1(M) = \pi_1(X)$. Then from the proof of the geometrization conjecture by Perelman in dimension $3$, the work of Freedman--Quinn in dimension $4$ \cite{freedman-quinn}, and the work of Farrell--Hsiang in dimensions 5 and higher \cite{farrell-hsiang}, the Borel conjecture holds for such groups, thus the manifolds $X$ and $M$ are homeomorphic.  
\end{proof}

\section{Asphericity Theorem}\label{sec:aspherical}

As mentioned in Section \ref{subsec:outline}, a crucial step to prove Theorem \ref{thm:max-rank-nilmanifold} is to show that in case of equality, the universal cover is a contractible manifold. We now prove something slightly stronger.

\begin{theorem}\label{thm:contractible}
    For each $K \in \mathbb{R}$, $N \geq 1$, there is $\varepsilon > 0 $ such that if $(X, \mathsf{d} , \mm )$ is an $\rcd (K,N)$ space and $G \leq \iso (X)$ is a discrete group of isometries with $\diam (X/ G) \leq \varepsilon$, then 
    \begin{equation}\label{eq:rank-bound}
        \rank (G) \leq N,
    \end{equation}
    and in case of equality $X$ is homeomorphic to $\mathbb{R}^N$.  
\end{theorem}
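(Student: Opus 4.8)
The plan is to argue by contradiction, following the Breuillard--Green--Tao--style analysis packaged in Lemma \ref{lem:malcev-construction}, and to deduce the topological conclusion from the topological transfer lemmas of Section \ref{sec:topology}. So suppose the statement fails for some $K,N$: there is a sequence of $\rcd(K,N)$ spaces $(X_i,\mathsf{d}_i,\mm_i)$ with discrete groups $G_i\leq \iso(X_i)$ and $\diam(X_i/G_i)\to 0$ such that either $\rank(G_i)>N$ for all $i$, or $\rank(G_i)=N$ for all $i$ but $X_i$ is not homeomorphic to $\mathbb{R}^N$. After rescaling we may assume the spaces are $\rcd(-\tfrac1i,N)$, and after normalizing measures and passing to a subsequence, Theorem \ref{thm:compactness} and Theorem \ref{thm:fukaya-yamaguchi} give pointed measured Gromov--Hausdorff convergence $(X_i,p_i)\to(X,p)$ with $X$ an $\rcd(0,N)$ space, and (after passing to finite-index subgroups $G_i'$ with bounded index via Lemma \ref{lem:almost-homogeoeus-subgroups}, which does not change $\rank$ by Proposition \ref{pro:rank-properties-1}) the limit group $\Gamma\leq\iso(X)$ acts freely and transitively, so $X$ is a nilpotent Lie group of topological dimension $n\le N$ equipped with a sub-Finsler metric (Lemma \ref{lem:almost-translations}). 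The rank bound \eqref{eq:rank-bound} is then exactly the inequality $\rank(G_i')=\rank(G_i)\le n\le N$ recorded in Remark \ref{rem:rank-equality} (via Lemma \ref{lem:malcev-construction}, Lemma \ref{lem:b1-compute}, Proposition \ref{pro:rank-properties-2}). This immediately rules out the first alternative.

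For the equality case, suppose $\rank(G_i)=N$. Then equality holds throughout \eqref{eq:rank-string}, so by Remark \ref{rem:rank-equality} we get $n=N$, $\Ker(\Phi_i)=0$, $X$ is simply connected, and $\tilde\Gamma_i=\Gamma_i$. Since $X$ is a simply connected nilpotent Lie group of dimension $N$ it is diffeomorphic to $\mathbb{R}^N$; in particular $X$ is contractible, and as a smooth manifold it certainly satisfies the two-sided sectional curvature and injectivity radius bounds after rescaling — more to the point, after rescaling the whole picture we may assume the limit is $(\mathbb{R}^N,0)$ with $\Gamma$ the translation group (Lemma \ref{lem:euclidean-nilpotent-groups}), so Corollary \ref{cor:locally-contractible-reifenberg} applies and gives, for $i$ large, the uniform local contractibility in condition \eqref{condition:contractibility-of-balls}. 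We now want to transfer contractibility of $X\cong\mathbb{R}^N$ back to $X_i$. First, Corollary \ref{cor:manifold-reifenberg} shows $X_i$ is a topological $N$-manifold for $i$ large. Next, we produce the maps needed for Lemma \ref{lem:transfer-trivial-homology} and Lemma \ref{lem:transfer-simply-connected}: a continuous map $f_i:X_i\to X$ and a (not necessarily continuous) map $h_i:X\to X_i$ with $h_i\circ f_i\approx \Id_{X_i}$ and $h_i$ roughly $1$-Lipschitz on the scale $1$. These come from Lemma \ref{lem:local-to-global-approximation} applied with $G_X=G_i'$, $G_Y=\Gamma$ and $\varphi=\Phi_i$ (composed with the identification $G_i'/W_i\cong\Gamma$): the local Gromov--Hausdorff approximations of Theorem \ref{thm:reifenberg-weak} supply a continuous $f$ on $B_{10}$, the nilprogression control in Lemma \ref{lem:malcev-construction}\eqref{conclusion:malcev-4} together with the bounded-displacement properties gives conditions \eqref{condition:f-1}--\eqref{condition:f-4}, and since $X\cong\mathbb{R}^N$ is Riemannian with the required bounds after rescaling, Lemma \ref{lem:local-to-global-approximation} produces a continuous equivariant $\tilde f=f_i:X_i\to X$. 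The reverse map $h_i$ is obtained symmetrically (swap the roles of $X_i$ and $X$) or directly from Theorem \ref{thm:reifenberg-weak}'s approximations; conditions \eqref{condition:h-non-expanding} and \eqref{condition:hf-id} follow from the distance estimates. Applying Lemma \ref{lem:transfer-trivial-homology} with $Y=X$ (whose reduced homology vanishes) gives $H_m(X_i)=0$ for all $m\in\{1,\dots,N\}$, and Lemma \ref{lem:transfer-simply-connected} gives that $X_i$ is simply connected. By Lemma \ref{lem:contractible}, $X_i$ is contractible; being a contractible topological $N$-manifold it is homeomorphic to $\mathbb{R}^N$ (for instance because a contractible open $N$-manifold with the homotopy type forced here is $\mathbb{R}^N$ — one should cite the relevant characterization, using simple connectivity at infinity which can itself be extracted from the contractibility-of-balls estimates). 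This contradicts the assumption that $X_i\not\cong\mathbb{R}^N$, completing the proof.

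The main obstacle I expect is the verification that the hypotheses \eqref{condition:f-1}--\eqref{condition:f-4} of Lemma \ref{lem:local-to-global-approximation} hold for the concrete maps built from Theorem \ref{thm:reifenberg-weak} and the nilprogression structure — in particular matching up the group isomorphism $\varphi=\Phi_i$ (which involves quotienting by the small subgroups $W_i$ and passing to the limit group $\Gamma$) with the geometric approximation, and checking the "escape" conditions \eqref{condition:f-3} and \eqref{condition:f-4} relating displacement in $X$ to displacement in $X_i$; this is where Lemma \ref{lem:malcev-construction}\eqref{conclusion:malcev-4} is essential. A secondary subtlety is the final step that a contractible topological $N$-manifold arising this way is genuinely homeomorphic to $\mathbb{R}^N$: contractibility alone is not enough in general (there are contractible open manifolds not homeomorphic to Euclidean space), so one must additionally control the topology at infinity, which should follow from the uniform local contractibility of $X_i$ together with the fact that large balls Gromov--Hausdorff approximate large Euclidean balls, but this deserves a careful argument rather than a one-line assertion.
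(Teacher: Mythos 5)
Your overall architecture (contradiction, rescaling, Lemma \ref{lem:almost-homogeoeus-subgroups}, the rank inequality via Lemma \ref{lem:malcev-construction} and Remark \ref{rem:rank-equality}, then the topological transfer lemmas) matches the paper, and your derivation of the rank bound \eqref{eq:rank-bound} is correct. But there is a genuine gap in the equality case, and it lies exactly where you flagged uncertainty: the inputs to Lemma \ref{lem:local-to-global-approximation}.

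You propose to apply Lemma \ref{lem:local-to-global-approximation} with the target space being the limit $X\cong\mathbb{R}^N$ and with $G_Y=\Gamma$, the equivariant limit group. This cannot work as stated. First, Lemma \ref{lem:local-to-global-approximation} requires $G_Y\leq \iso(Y)$ to be \emph{discrete}, but $\Gamma$ is the full translation group of $\mathbb{R}^N$ (Lemma \ref{lem:euclidean-nilpotent-groups}), a connected Lie group. Second, and more fundamentally, the lemma requires a group \emph{isomorphism} $\varphi:G_X\to G_Y$, and no such isomorphism between the finitely generated discrete group $G_i'$ and the continuous group $\Gamma$ exists; the map you suggest ($\Phi_i$ composed with the identification $G_i'/W_i\cong\Gamma_i$) lands in the finitely generated nilpotent group $\Gamma_i$, not in $\Gamma$. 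Even if you tried to replace $\Gamma$ by some discrete cocompact $G_Y\leq\iso(\mathbb{R}^N)$ isomorphic to $\Gamma_i$, that would force $\Gamma_i$ to be a Bieberbach group, hence virtually abelian — but $\Gamma_i$ is in general only nilpotent (the Heisenberg-type examples are precisely the interesting case). The paper's crucial move, which your proposal lacks, is to construct a \emph{new} target $Y_i$: the simply connected nilpotent Lie group whose Lie algebra $\mathfrak{l}_i$ is built from the Malcev coordinates of $\tilde\Gamma_i=\Gamma_i$ (part \eqref{conclusion:malcev-1} of Lemma \ref{lem:malcev-construction}), equipped with the left-invariant metric that makes the basis $\{\vv_{j,i}\}$ orthonormal. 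Then $\Gamma_i$ embeds as a cocompact lattice acting by left translations on $Y_i$, so one can apply Lemma \ref{lem:local-to-global-approximation} with $G_X=G_Y=\Gamma_i$ and $\varphi=\Id$; the almost-flatness needed for the Riemannian requirement \eqref{condition:riemannian-sec-inj} on $Y_i$ comes from the convergence of structure constants to $0$ (and an intermediate rescaling). Proposition \ref{pro:almost-equivariant} in the paper is exactly the verification of hypotheses \eqref{condition:f-1}--\eqref{condition:f-4} for this $Y_i$, and this is what makes the transfer lemmas applicable.

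A secondary difference, which you anticipated, is the final passage from "$X_i$ is a contractible topological $N$-manifold'' to "$X_i\cong\mathbb{R}^N$". Your proposed route via simple connectivity at infinity is possible in principle but delicate and dimension-sensitive. The paper sidesteps it: since $\Gamma_i$ is discrete, torsion-free, and acts freely, $X_i/\Gamma_i$ is a closed aspherical manifold with virtually nilpotent fundamental group, so Theorem \ref{thm:borel-nilpotent} (the Borel conjecture in this setting) identifies it with a nilmanifold, and the universal cover $X_i$ is therefore $\mathbb{R}^N$. This is considerably cleaner and is the intended argument.

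Finally, two smaller points. The paper does not immediately get $X\cong\mathbb{R}^N$ for free from "$X$ is a simply connected nilpotent Lie group of dimension $N$'': it first uses Theorem \ref{thm:dim} to show $\mm$ is a multiple of $\mathcal{H}^N$, then Honda--Nepechiy to upgrade the metric to Riemannian, then Milnor's theorem to conclude $X$ is abelian, and then Remark \ref{rem:rank-equality} to get simple connectivity; your proposal elides this chain. And Corollary \ref{cor:no-small-subgroups} (so that $W_i$ is eventually trivial and $\Gamma_i=G_i'$, Remark \ref{rem:nss-case}) is used essentially in setting up the lattice action; you should make that explicit rather than leaving it implicit in "the identification $G_i'/W_i\cong\Gamma$''.
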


To prove Theorem \ref{thm:contractible} we follow the same strategy Gromov used to prove the Almost Flat Manifolds Theorem \cite{gromov} (see  \cite{buser-karcher} for a much more detailed explanation). After re-scaling, we can assume that $K$ is close to $0$. Relying on results from \cite{breuillard-green-tao, zamora-limits}, we show that if $\varepsilon$ is small enough, then $X$ is Gromov--Hausdorff close to a nilpotent Lie group of dimension $\geq \rank (G)$, proving \eqref{eq:rank-bound}.

In case $\rank (G) = N$, we show that a finite-index subgroup $\Gamma \leq G$ is isomorphic to  a lattice in a simply connected nilpotent Lie group $Y$. Furthermore, a large ball $B $ in $ X $ is both homeomorphic and Gromov--Hausdorff close to a ball in the Euclidean space $\mathbb{R}^N$, the same being true for a ball $B^{\prime}$ in $Y$ when equipped with an appropriate invariant Riemannian metric. Moreover, the homeomorphism $ f : B \to B^{\prime}$ can be taken to be almost equivariant with respect to the local actions of the group $\Gamma$ on both balls. 

Using Lemma \ref{lem:local-to-global-approximation}, the map $f$ is extended to a global $\Gamma$-equivariant homotopy equivalence $\tilde{f} : X \to Y$. Since the Borel Conjecture holds for virtually nilpotent groups, this finishes the proof.  We now give the complete proof.

\begin{proof}
Assuming the theorem fails, there is a sequence  $(X_i , \mathsf{d}_i , \mm _i  )$ of  $\rcd ( K , N ) $ spaces and $G_i \leq \iso (X_i )$ discrete groups of isometries with $\diam (X_i / G_i) \to 0$ such that for all $i$ either $\rank (G_i ) > N $, or $\rank (G_i ) = N$ and $X_i$ is not homeomorphic to $\mathbb{R}^N$. 
After taking a subsequence and re-scaling the spaces $X_i$ by  factors $\lambda_ i$ with $\lambda _i \to \infty$ and $\lambda_i \cdot \diam (X_i / G_i ) \to 0$, we can assume $(X_i, \mathsf{d}_i, \mm_i) $ is an $\rcd (- \frac{1}{i}, N)$ space for each $i$.

For $p_i \in X_i$, by taking again a subsequence and renormalizing the measures $\mm_i$, we can assume $(X_i, \mathsf{d}_i, \mm _i,  p_i)$ converges in the pointed measured Gromov--Hausdorff sense to a pointed $\rcd (K,N)$ space $(X, \mathsf{d}, \mm , p)$. By Lemma \ref{lem:almost-homogeoeus-subgroups}, there is $s  \in \mathbb{N} $ and groups $G_i^{\prime} \leq G_i $ satisfying  \eqref{eq:gi-nilpotent} and \eqref{eq:bounded-index}. By  \cite[Theorem 4.1]{zamora-limits}, $X$ is a nilpotent Lie group equipped with an invariant metric. By Lemma \ref{lem:malcev-construction} and Remark \ref{rem:rank-equality},  for $i$ large enough we have
\begin{equation}\label{eq:rank-inequality-old}
    \rank (G_i ) =  \rank (G_i^{\prime})  \leq   n  \leq N,
\end{equation} 
where $n$ denotes the topological dimension of $X$. This  finishes the proof of \eqref{eq:rank-bound}.

From now on we assume $\rank (G_i ) = N $ for all $i$. Under this assumption,  \eqref{eq:rank-inequality-old} implies that $X$ has topological dimension $N$.  By Theorem \ref{thm:dim}, 
$\mm$ is a multiple of $\mathcal{H}^N$ so it is  $\iso (X)$-invariant, hence by the work of Honda--Nepechiy \cite[Theorem 2]{honda-nepechiy} the metric of $X$ is Riemannian. Moreover, by  \cite[Theorem 2.4]{milnor-invariant}, $X$ is abelian, and by Remark \ref{rem:rank-equality} we have  $(X , p )  = (  \mathbb{R}^N , 0 )$.

 By Corollary \ref{cor:no-small-subgroups}, the groups $G_i^{\prime}$ admit no small subgroups. Then by Lemma \ref{lem:malcev-construction} and Remark \ref{rem:nss-case}, for $i$ large enough there are elements $u_{1,i}, \ldots , u_{n,i} \in \Gamma_i  = G_ i ^{\prime }$, and $N_{1,i}, \ldots , N_{n,i} \in \mathbb{R}^+$ with the following properties:
    \begin{itemize}
        \item The set map $ \alpha _i : \mathbb{Z}^n \to \Gamma  _i$ given by 
        \[  \alpha_i (m_1 , \ldots , m_n) = u_{1,i}^{m_1}\cdots u_{n,i}^{m_n}       \]
        is a bijection.  Moreover, there are polynomials $Q_i : \mathbb {R}^n \times \mathbb{R}^n\to \mathbb{R}^n$ of degree $\leq d(n)$ such that the product in $\Gamma_i$ is given by 
        \[g_1 \cdot g_2 = \alpha _i  ( Q_i ( \alpha _i ^{-1} (g_1) , \alpha _i  ^{-1} (g_2) )  ).  \]\label{property:zamora-1}
        \item There is $C > 0 $ such that the set
        \[  
            P_i : = P (  u_{1,i}, \ldots , u_{n,i} ; N_{1,i}, \ldots , N_{n,i} )  \subset  \Gamma _ i 
        \]  
        is a nilprogression in $C$-normal form with $\thi (P_i ) \to \infty $.  \label{property:zamora-3}
        \item  \label{property:zamora-4} For each $\varepsilon > 0 $ there is $\delta > 0 $ such that 
        \begin{align}
    G( \delta  P_ i  )    \subset  \{ & g \in \Gamma_i \, \vert \, \mathsf{d}_i(g p_i, p_i ) \leq \varepsilon \} ,  \label{eq:grid-small} \\
    \{ g \in  \Gamma_i  \, \vert  \, \mathsf{d}_i( & g p_i , p_i ) \leq \delta \}  \subset     G( \varepsilon  P_i  ) \label{eq:grid-large} 
 \end{align}
 for $i$ large enough.    
    \end{itemize}
By  \eqref{eq:grid-large}, after re-scaling the sequence $X_i$ by a fixed factor, we can assume 
\begin{equation} 
\{ g \in \Gamma _i \vert \mathsf{d} _i (gp_i, p_i) \leq 100 \} \subset G(P_i) . \label{eq:grid-large-2}    
\end{equation}
By  \eqref{eq:grid-small}, there is $\delta_1 > 0 $ with $G(\delta_1 P_i) \subset \{ g \in \Gamma _i \vert \mathsf{d}_i(gp_i,p_i) \leq 1 \} $. Hence there is $C_1 \in \mathbb{N} $ so that 
\begin{equation} \label{eq:grid-small-2}
G(P_i) \subset  G(\delta_1  P_i) ^{C_1} \subset   \{ g \in \Gamma _i \vert \mathsf{d}_i(gp_i, p_i) \leq C_1 \} .   
\end{equation}
Let $\mathfrak{l} _ i $ be the Lie algebra of the Lie group $(\mathbb{R}^n, Q_i )$, and let 
\[ \vv_{j,i} : = \log \left( u_{j,i}^{\lfloor \frac{N_{j,i}}{C}\rfloor }  \right) \in \mathfrak{l}_i  .  \]
By  \eqref{eq:grid-small-2}, after passing to a subsequence, for each $j \in \{ 1, \ldots , n \}$ we can assume
\[    u_{j,i}^{ \lfloor \frac{N_{j,i}}{C}\rfloor  } p_i \to \vv_j \, \, \text{ for some }\vv_j \in X = \mathbb{R}^n.  \]

By Lemmas \ref{lem:almost-translations} and  \ref{lem:euclidean-nilpotent-groups}, after further taking a subsequence, the groups $G_i^{\prime}$ converge equivariantly to the group of translations in $X = \mathbb{R}^n$. It is proven in \cite[Lemma 8.2 and Lemma 2.64]{zamora-limits} that the structure coefficients of $\mathfrak{l}_i$ with respect to the basis $\{ \vv_{1,i}, \ldots , \vv_{n,i}\} $ converge to the structure coefficients in $\mathbb{R}^n$ with respect to $\{ \vv_1, \ldots , \vv_n \}$ as $i \to \infty$. These coefficients are of course $0$ as $\mathbb{R}^n$ is abelian. 

For each $i$, equip $\mathfrak{l}_i$ with the inner product $\langle \cdot , \cdot  \rangle _i  $ that makes the linear isomorphism $\mathfrak{l}_i \to \mathbb{R}^n$ given by $\vv_{j,i} \mapsto \vv_j $ an isometry and let $Y_i$ be the corresponding simply connected nilpotent Lie group equipped with the left invariant Riemannian metric induced from $\langle \cdot , \cdot \rangle _i$. Also let $q_i \in Y_i $ be the identity element and consider the map
\[  f_i :  ( X_i , p_i )  \to  ( Y_i , q_i )    \]
obtained by composing the pointed Gromov--Hausdorff approximation $X_i \to X$ we have by definition, the isometric linear isomorphism  $X \to \mathfrak{l}_i$ that sends $\vv_j \mapsto \vv_{j,i}$, and the exponential map $ \mathfrak{l}_i \to Y_i$. Notice that by Theorem \ref{thm:reifenberg-weak} we can assume $f_i\vert _{B_{100}(p_i)}$ is continuous for $i$ large enough.  

\begin{proposition}\label{pro:almost-equivariant}
For each $\delta  > 0 $, the following holds for $i$ large enough:
\begin{enumerate}[(1)]
    \item $\diam (X_i / \Gamma_i )$, $\diam (Y_i / \Gamma _i) \leq \delta$.\label{pro:almost-equivariant-a}
    \item  For all $x_1, x_2 \in B_{10}(p_i)$,\label{pro:almost-equivariant-b} 
    \[    \vert \textsf{d} ( f_i (x_1), f_i (x_2)) - \mathsf{d}_i(x_1, x_2) \vert \leq \delta .    \]
    \item For all $y \in B_1(q_i)$, there is $x \in B_2 (p_i)$ with 
    \[    \textsf{d} (f_i(x) ,y ) \leq \delta .              \]\label{pro:almost-equivariant-b.5}
    \item If $x \in B_{10}(p_i)$, $g \in \Gamma_i$ are such that $g f_i(x) \in B_9 (q_i)$, then \label{pro:almost-equivariant-c}
    \[     g x \in B_{10}(p_i)  .    \]
    \item If $x \in B_{10}(p_i)$, $g \in \Gamma_i$ are such that $gx \in B_{10}(p_i)$, then \label{pro:almost-equivariant-d}
    \[  d(f_i (gx) , g( f_i (x) )) \leq \delta .     \]
\end{enumerate}

\end{proposition}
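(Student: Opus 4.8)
The plan is to compare the two $\Gamma_i$-actions through the transport map $f_i = \exp_i \circ L_i \circ g_i$, where $g_i \colon X_i \to X = \mathbb{R}^n$ is the given pointed Gromov--Hausdorff approximation, $L_i \colon \mathbb{R}^n \to \mathfrak{l}_i$ the linear isometry $\vv_j \mapsto \vv_{j,i}$, and $\exp_i \colon \mathfrak{l}_i \to Y_i$ the Lie exponential. Since the structure constants of $\mathfrak{l}_i$ in the basis $\{\vv_{j,i}\}$ converge to $0$ (\cite[Lemma 8.2 and Lemma 2.64]{zamora-limits}, as used just above), a left-invariant Riemannian metric on $Y_i$ equal to $\langle\cdot,\cdot\rangle_i$ at the identity is $o(1)$-flat on each fixed ball; in particular $(Y_i, q_i) \to (\mathbb{R}^n, 0)$ and $\exp_i \circ L_i \colon (\mathbb{R}^n, 0) \to (Y_i, q_i)$ is an $o(1)$-isometry on $B_R(0)$ for every fixed $R$. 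Composing with $g_i$, the restriction $f_i|_{B_R(p_i)}$ is an $o(1)$-isometry onto an $o(1)$-dense subset of $B_R(q_i)$, with $f_i(p_i) = q_i$. Taking $R = 10$ gives \eqref{pro:almost-equivariant-b}, and taking $R = 2$ (together with $B_1(q_i) \subset B_2(q_i)$) gives \eqref{pro:almost-equivariant-b.5}.

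For \eqref{pro:almost-equivariant-a}, on the $X_i$ side one uses only that $[G_i : \Gamma_i]$ is bounded and $\diam(X_i/G_i) \to 0$. On the $Y_i$ side, recall $\exp_i(\vv_{j,i}) = u_{j,i}^{\lfloor N_{j,i}/C\rfloor}$; since a simply connected nilpotent Lie group has unique roots, $\log_i u_{j,i} = \vv_{j,i}/\lfloor N_{j,i}/C\rfloor$, and as $|\vv_{j,i}|_i = |\vv_j| \le C_1$ (by \eqref{eq:grid-small-2}) and $\thi(P_i) \to \infty$, each generator $u_{j,i}$ of the cocompact lattice $\Gamma_i \le Y_i$ displaces $q_i$ by $o(1)$. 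Together with $(Y_i,q_i)\to(\mathbb{R}^n,0)$, this makes $(Y_i, q_i, \Gamma_i)$ converge equivariantly to $(\mathbb{R}^n, 0, \mathbb{R}^n)$ acting by translations, so $\diam(Y_i/\Gamma_i) \to 0$.

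For \eqref{pro:almost-equivariant-d}, fix $R$. By the equivariant convergence of $(X_i, p_i, \Gamma_i)$ to the translation group of $\mathbb{R}^n$ (Lemmas \ref{lem:almost-translations} and \ref{lem:euclidean-nilpotent-groups}), the Malcev description of $\Gamma_i = \tilde{\Gamma}_i$ in Lemma \ref{lem:malcev-construction}, and the constructions of \cite[Section 8]{zamora-limits}, the $\Gamma_i$-action on $B_R(p_i)$ is, after transporting via $g_i$, $o(1)$-close to a linear action of $\Gamma_i$ on $\mathbb{R}^n$ through a homomorphism $\chi_i \colon \Gamma_i \to \mathbb{R}^n$, i.e.\ $g_i(\gamma x) = g_i(x) + \chi_i(\gamma) + o(1)$ for $x, \gamma x \in B_R(p_i)$ and $\gamma$ in the relevant range. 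Since $u_{j,i}^{\lfloor N_{j,i}/C\rfloor}p_i \to \vv_j$ and $\chi_i$ is a homomorphism, $\chi_i(u_{j,i}) = \vv_j/\lfloor N_{j,i}/C\rfloor + o(1/N_{j,i})$, whence $\chi_i(u_{1,i}^{m_1}\cdots u_{n,i}^{m_n}) = \sum_j (m_j/\lfloor N_{j,i}/C\rfloor)\vv_j + o(1)$ whenever $\sum_j |m_j|/N_{j,i}$ is bounded. On $Y_i$, the same element satisfies $\log_i(u_{1,i}^{m_1}\cdots u_{n,i}^{m_n}q_i) = \sum_j (m_j/\lfloor N_{j,i}/C\rfloor)\vv_{j,i} + o(1)$, the Baker--Campbell--Hausdorff corrections being $o(1)$ because the structure constants of $\mathfrak{l}_i$ vanish in the limit; since $L_i(\vv_j) = \vv_{j,i}$, this says $\log_i(\gamma q_i) = L_i(\chi_i(\gamma)) + o(1)$. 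Writing left translation by $\gamma$ on $Y_i$ as $y \mapsto \exp_i(\log_i(\gamma q_i)) \cdot y$ and using that $\exp_i$ is an $o(1)$-homomorphism on bounded subsets of $\mathfrak{l}_i$, we obtain, for $x, \gamma x \in B_R(p_i)$ in the relevant range,
\[
  d_{Y_i}\big(f_i(\gamma x),\, \gamma \cdot f_i(x)\big) = o(1),
\]
because $L_i(g_i(\gamma x)) = L_i(g_i(x)) + L_i(\chi_i(\gamma)) + o(1) = L_i(g_i(x)) + \log_i(\gamma q_i) + o(1)$. Taking $x = p_i$ also gives the orbit-length comparison $\mathsf{d}_i(\gamma p_i, p_i) = d_{Y_i}(\gamma q_i, q_i) + o(1)$ in this range, and with $R = 10$ this is \eqref{pro:almost-equivariant-d}.

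Finally, for \eqref{pro:almost-equivariant-c}: given $x \in B_{10}(p_i)$ and $g \in \Gamma_i$ with $g \cdot f_i(x) \in B_9(q_i)$, \eqref{pro:almost-equivariant-b} on $B_{10}(p_i)$ gives $d_{Y_i}(f_i(x), q_i) \le 10 + o(1)$, so, $g$ being an isometry of $Y_i$, $d_{Y_i}(g q_i, q_i) \le d_{Y_i}(gq_i, g f_i(x)) + d_{Y_i}(g f_i(x), q_i) \le 19 + o(1)$. As the displacement of $\Gamma_i$ on $Y_i$ is, via $\log_i$ and $L_i$, controlled by the nilprogression $P_i$ exactly as above, every $\gamma \in \Gamma_i$ with $d_{Y_i}(\gamma q_i, q_i) \le 20$ lies in a fixed bounded power $G(P_i)^{C_2}$, and every element of $G(P_i)$ displaces $p_i$ by at most $C_1$ (by \eqref{eq:grid-small-2}); hence $\mathsf{d}_i(g p_i, p_i) \le C_1 C_2$ and $\mathsf{d}_i(gx, p_i) \le \mathsf{d}_i(x, p_i) + \mathsf{d}_i(g p_i, p_i) \le 10 + C_1 C_2 =: R_0$. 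Applying the intertwining of the previous paragraph on $B_{R_0+1}(p_i)$, $d_{Y_i}(f_i(gx), q_i) = d_{Y_i}(g f_i(x), q_i) + o(1) \le 9 + o(1)$, and then \eqref{pro:almost-equivariant-b} on $B_{R_0+1}(p_i)$ gives $\mathsf{d}_i(gx, p_i) = d_{Y_i}(f_i(gx), q_i) + o(1) < 10$ for $i$ large. The main obstacle is the intertwining step: one must verify that the single map $f_i$ realizes \emph{both} equivariant collapses to $(\mathbb{R}^n, 0, \mathbb{R}^n)$ compatibly. This is forced on generators by construction — $u_{j,i}^{\lfloor N_{j,i}/C\rfloor}$ moves $p_i$ to $\vv_j$ on $X_i$ and to $\exp_i(\vv_{j,i}) = \exp_i(L_i(\vv_j))$ on $Y_i$ — but propagating it to all of $G(P_i)$ and its bounded powers requires the Malcev polynomial law of \cite[Section 8]{zamora-limits} and careful control that the Baker--Campbell--Hausdorff corrections stay $o(1)$ across the relevant scales; the apparent circularity in \eqref{pro:almost-equivariant-c} (needing $gx$ inside the domain of the intertwining before bounding $\mathsf{d}_i(gx,p_i)$) is resolved, as above, by a crude preliminary bound from $d_{Y_i}(gq_i,q_i)$ and \eqref{eq:grid-small-2} followed by the sharp estimates on $B_{R_0+1}(p_i)$.
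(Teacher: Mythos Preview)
Your treatment of \eqref{pro:almost-equivariant-a}--\eqref{pro:almost-equivariant-b.5} matches the paper's. For \eqref{pro:almost-equivariant-d} and \eqref{pro:almost-equivariant-c}, however, the paper takes a much shorter route: both are proved by contradiction and subsequence extraction rather than by direct estimates. For \eqref{pro:almost-equivariant-d}, if the bound fails along a subsequence with witnesses $(x_i,g_i)$, then by \eqref{eq:grid-large-2} one writes $g_i=\exp(t_{1,i}\vv_{1,i})\cdots\exp(t_{n,i}\vv_{n,i})$ with $t_{j,i}\in[-1,1]$; passing to a further subsequence so that $t_{j,i}\to t_j$, the element $g_i$ converges to the translation by $\sum_j t_j\vv_j$ under \emph{both} equivariant convergences $(X_i,p_i,\Gamma_i)\to(\mathbb{R}^n,0,\mathbb{R}^n)$ and $(Y_i,q_i,\Gamma_i)\to(\mathbb{R}^n,0,\mathbb{R}^n)$, simply because each factor $\exp(t_{j,i}\vv_{j,i})$ does so by construction of $\vv_j$ and of $L_i$. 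This is the whole intertwining argument; no homomorphism or BCH bookkeeping is needed. For \eqref{pro:almost-equivariant-c} the paper argues the same way: a bad subsequence has $g_i\to g$, and the $Y_i$-side gives $gx\in\overline{B_{19/2}(0)}$ while the $X_i$-side gives $gx\notin B_{19/2}(0)$.

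Your direct approach through a homomorphism $\chi_i\colon\Gamma_i\to\mathbb{R}^n$ has a genuine gap. The groups $\Gamma_i$ are torsion-free nilpotent but in general not abelian, so any genuine homomorphism to $\mathbb{R}^n$ kills $[\Gamma_i,\Gamma_i]$; such a $\chi_i$ cannot encode displacements of commutators, and the deduction $\chi_i(u_{j,i})=\vv_j/\lfloor N_{j,i}/C\rfloor+o(1/N_{j,i})$ from the value on the $\lfloor N_{j,i}/C\rfloor$-th power uses exactly the homomorphism property you do not have. What you really want is the map sending $\gamma$ to its normalized Malcev coordinates, which is only an \emph{approximate} homomorphism with defect controlled by the vanishing structure constants; making this precise and propagating the $o(1)$ errors through bounded powers of $G(P_i)$ is the ``careful control'' you yourself flag as unfinished. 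Your bootstrap for \eqref{pro:almost-equivariant-c} also silently assumes a $Y_i$-analog of \eqref{eq:grid-large-2}. All of this is recoverable, but the paper's compactness argument bypasses it entirely: once one knows the two equivariant limits agree on the generators $\exp(t\vv_{j,i})$, contradiction and subsequences do the rest.
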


\begin{proof}
 \eqref{pro:almost-equivariant-a} is satisfied by hypothesis and the fact that the structure coefficients of $\mathfrak{l}_i$ converge to 0.  \eqref{pro:almost-equivariant-b} and \eqref{pro:almost-equivariant-b.5}  are satisfied as the maps $X_i \to X \to \mathfrak{l}_i \to Y_i$ are each either an isometry or a map uniformly close to being an isometry on a large ball. 

By contradiction, if  \eqref{pro:almost-equivariant-d} doesn't hold, after passing to a subsequence we could find $x_i \in B_{10}(p_i), $ $g_i \in \Gamma_i$ such that $g_i x_i \in B_{10}(p_i)$ and 
\begin{equation}\label{eq:not-almost-equivariant-d}
         \textsf{d} (  f_i (g_i x_i ) , g_i f_i (x_i) ) \geq \delta \, \text{ for all } i.      
\end{equation}
By  \eqref{eq:grid-large-2}, 
\[     g _i   = \exp ( t_{1,i} \vv_{1,i} ) \cdots \exp ( t_{n,i} \vv_{n,i}  )              \]
for some $t_{1,i}, \ldots , t_{n,i} \in [ -1 , 1 ] $.  After passing again to a subsequence  we have $t_{j,i} \to t_j$ as $i \to \infty$ for some $t_j \in [-1,1]$. Then by how we defined the vectors $\vv_j$, it follows that $\exp ( t_{j,i} \vv _ {j,i})$ converges to the translation in $X$ by the vector $t_j \vv_j$. As the convergence is equivariant, $g_i$ converges to the translation by $ t_1 \vv _ 1 + \ldots + t_n \vv_n$. 

On the other hand, by looking at the pointed Gromov--Hausdorff approximations $Y_i \to X$ obtained by composing $\log : Y_i \to \mathfrak{l}_i$ with the linear isomorphism $\mathfrak{l}_i \to X$ that sends $\vv_{j,i} \mapsto \vv_j$, the sequence of actions of $\Gamma _i $ on $Y_i$ converge (again by Lemmas \ref{lem:almost-translations} and \ref{lem:euclidean-nilpotent-groups}) to the group of translations of $X$. With respect to this convergence, by how we chose the map $\mathfrak{l}_i \to X$, the sequence $\exp (t_{j,i}\vv_{j,i})$ converges again to the translation in $X$ by $t_j \vv_j$, so $g_i$ again converges to $t_1\vv_1 + \ldots + t_n \vv_n $. 

The action of $g_i$ on both $X_i$ and $Y_i$ converges to the same isometry of $X$, and the pointed Gromov--Hausdorff approximations $f_i : X_i \to Y_i $ were obtained by composing the ones we used for $X_i \to X$ with the approximate inverses of the ones we used for $Y_i \to X$, meaning that $f_i$ approximately commutes with the action of $g_i$, contradicting  \eqref{eq:not-almost-equivariant-d}.

Lastly, assume  \eqref{pro:almost-equivariant-c} fails, so after taking a subsequence there are $x_i \in B_{10}(p_i)$, $g_i \in \Gamma_i$ with $g_i f_i (x_i)  \in B_{9}(q_i) $ but $ g_i x_i  \not\in  B_{10}(p_i) $ for all $i$. After taking again a subsequence both sequences  $x_i$ and $f_i (x_i)$ converge to a point $x \in X$ and $g_i$ converges to an isometry of $X$. On one hand, since $g_i f_i (x_i) \in B_9(q_i)$ for all $i$, we have $gx \in B_{19/2} (p)$, but on the other hand since $g_i x_i \not\in B_{10}(p_i)$ for all $i$ we also have $gx \not\in B _{19/2}(p)$. This is a contradiction.
\end{proof}

Now fix $\varepsilon  > 0 $ given by Lemma \ref{lem:transfer-trivial-homology} with $m = n $. By Proposition \ref{pro:almost-equivariant}, the conditions of Lemma \ref{lem:local-to-global-approximation} hold for $i$ large enough so we have continuous $\Gamma_i$-equivariant maps $\tilde{f}_i : X_i \to Y_i $ such that 
 \begin{enumerate}[(i)]
     \item For all $x_1, x_2 \in X_i$ with $\min \{ \mathsf{d}_i(x_1, x_2) ,  d(\tilde{f}_i (x_1),\tilde{f}_i (x_2)) \}  \leq 2 $  one has 
     \[   \vert  \textsf{d}  ( \tilde {f}_i (x_1) \tilde{f} _i (x_2) ) - \mathsf{d}_i(x_1, x_2) \vert \leq \varepsilon /3.     \] \label{conclusion:f-i-tilde-1}
     \item  For all $y \in Y_i$, there is $x \in X_i$ with $ \textsf{d}  ( \tilde{f} _i x , y ) \leq \varepsilon / 3$.\label{conclusion:f-i-tilde-2}
 \end{enumerate}
By \eqref{conclusion:f-i-tilde-2}, we can define $h_i : Y_i \to  X_i $ so that $  \textsf{d}  ( \tilde{f}_i h_ i (y) , y ) \leq \varepsilon / 3 $ for all $y \in Y_i$. Then from \eqref{conclusion:f-i-tilde-1}, follows that
\begin{itemize}
    \item For all $y_1, y_2 \in Y_i$ with $ \textsf{d} (y_1, y_2) \leq 1$, one has $\mathsf{d}_i( h_i y_1, h_i y_2 ) \leq  \textsf{d} ( y_1 , y_2 ) + \varepsilon $.
    \item  For all $x \in X_i$, one has $\mathsf{d}_i(h_i(\tilde{f}_i(x)), x) \leq \varepsilon$.
\end{itemize}
By Corollary \ref{cor:locally-contractible-reifenberg}, for $i$ large enough the following holds.
\begin{itemize}
    \item For each $r \in ( 0 , 1 ]$, $x \in X_i$, the ball $B_r(x)$ is contractible in $B_{r + \varepsilon }(x )$.
\end{itemize}
This means that all the conditions of Lemmas \ref{lem:transfer-trivial-homology} and  \ref{lem:transfer-simply-connected} are satisfied, hence the spaces $X_i$ are simply connected and have trivial homology up to degree $n$. By Corollary \ref{cor:manifold-reifenberg} the spaces $X_i$ are manifolds of dimension $n$ so from Lemma \ref{lem:contractible} they are contractible. Since the groups $\Gamma _i$ are discrete and torsion-free, they act freely and the quotient $X_i / \Gamma _ i $ is a closed aspherical manifold. By Theorem \ref{thm:borel-nilpotent}, $X_i / \Gamma _ i $ is homeomorphic to a nilmanifold, so its universal cover $X_i$ is homeomorphic to $\mathbb{R}^n= \mathbb{R}^N$.
\end{proof}
\begin{proof}[Proof of Theorem \ref{thm:max-rank-nilmanifold}]
    By Theorem \ref{thm:contractible} applied to the universal covers, there is $\varepsilon (K,N) > 0$ such that if $\diam (X) \leq \varepsilon $ for an $\rcd (K,N)$ space $X$, then $\rank (\pi_1(X)) \leq N$, and in case of equality $X$ is an aspherical manifold. From the proof of Theorem \ref{thm:contractible} we deduce that $\pi_1(X)$ is virtually nilpotent, so the result follows from Theorem \ref{thm:borel-nilpotent}. 
\end{proof}

\section{Eigenmap to a flat torus}\label{sec:canonical}

Before we jump into the proof of Theorem \ref{thm:equivariant-torus}, we make a few remarks. If $(X,\mathsf{d}, \mm )$ is an $\rcd (K,N)$ space, and $G \leq \iso (X)$ a group of measure preserving isometries, then $G$ admits a natural isometric linear action on $\sobo (X)$ by 
\[   g^{\ast} f (x) : = f (g ^{-1} x ) , \, \,\,\, g \in G , f \in \sobo (X), x \in X .    \]
This action clearly commutes with the Laplacian. That is, if  $f \in \sobo (X)$ is in the domain of the Laplacian, then $g^{\ast}f $ is also in the domain of the Laplacian, and
\begin{equation*}
    \Delta (g^{\ast} f) = g^{\ast} \Delta f  .    
\end{equation*}
In particular, for each $\lambda \geq 0$, the eigenspace
\[     \{  f \in \sobo (X) \vert \Delta f = - \lambda f  \}    \]
is a $G$-invariant subspace of $\sobo (X)$.

\begin{proof}[Proof of Theorem \ref{thm:equivariant-torus}]

Assume by contradiction there is sequence of $\rcd (K,N)$ spaces $(X_i, \mathsf{d}_i, \mm _i )$ that converges in the measured Gromov--Hausdorff sense to $\mathbb{T}^N$ and there are abelian groups $G_i \leq \iso (X_i)$ with $\diam (X_i / G_i ) \to 0$,   but the thesis of the theorem fails for each $X_i$.

By Lemma \ref{lem:almost-translations}, the sequence $G_i$ converges equivariantly to a connected abelian group $G \leq \iso (\mathbb{T}^N)$ acting freely and transitively, which can be naturally identified with  $\mathbb{T}^N$. It is well known that the spectrum of the Laplacian on $\mathbb{T}^N$ consists of non-positive integers, the multiplicity of the eigenvalue $-1$ is $2N$, and the eigenfunctions corresponding to that eigenvalue are given by the coordinate functions. That is, if $\varphi_j, \psi_j : \mathbb{T}^N \to \mathbb{R}$ are given by 
\begin{equation}\label{eq:torus-inclusion}
    x = (  \varphi_1 (x) , \psi_1 (x), \ldots , \varphi_N (x) , \psi _N (x) ) \, \text{ for all } x \in \mathbb{T}^N  , 
\end{equation}  
then
\[   \Delta \varphi_j = - \varphi_j  , \,  \Delta \psi _j = - \psi _j \, \text{ for each } j \in \{ 1, \ldots , N \} ,   \]
and $ \{ \varphi_j, \psi _j \}_{j =1}^N $ is a set of orthogonal functions in $H^{1,2}(\mathbb{T}^N )$ with the same norm. Set 
\[   W_ j : = \langle \varphi_j \rangle \oplus \langle \psi_j\rangle   = \langle G \varphi _j \rangle  , \hspace{3cm} V : = \bigoplus_{j = 1 }^N W_j .  \hspace{1.2cm}        \]
Then it is not hard to check that for $f \in V$, one has
\begin{equation}\label{eq:f-dim-3}
    f \in W_j \text{ for some } j \hspace{0.5cm} \Longleftrightarrow \hspace{0.5cm} \dim _{\mathbb{R}}(\langle G f \rangle ) \leq 2 . \hspace{1.3cm}
\end{equation}
For each $i \in \mathbb{N}$, set 
\[   V_i : = \bigoplus_{1/2 < \lambda < 3/2} \{ f \in \sobo (X_i)  \vert \Delta f = - \lambda f .  \}     \]
By Corollary \ref{cor:spectrum-continuity}, for $i$ large enough, $\dim _{\R} (V_i )  = 2N$, and by Lemma \ref{lem:simultaneous-diagonalization},
\begin{equation}\label{eq:vi-decomposition}
     V_{i} = \bigoplus_{j=1}^N W_{j,i}    
\end{equation}
with each $W_{j,i}$ 2-dimensional, $G_i$-invariant, and spanned by eigenfunctions. 

\begin{lemma}\label{lem:w-ji-to-w-j}
    Assume for some choice of $j ( i ) \in \{ 1, \ldots , N \} $, a sequence of functions  $f_i \in W_{j(i), i}$  converges in $\sobo$ to some function $f \in \sobo (\mathbb{T}^N)$. Then $f \in W_{j_0}$ for some $j_0 \in \{ 1 , \ldots , N \}$. 
\end{lemma}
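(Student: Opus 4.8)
The plan is to prove that $f$ lies in $V$ and that $\dim_{\mathbb{R}}(\langle G f \rangle) \le 2$; the conclusion then follows from \eqref{eq:f-dim-3}. Since $W_{j(i),i}$ is spanned by eigenfunctions, $\Delta$ preserves it, so we may fix an $L^2$-orthonormal basis $\{ e_i^{(1)}, e_i^{(2)} \}$ of $W_{j(i),i}$ consisting of eigenfunctions, say $\Delta e_i^{(k)} = -\lambda_i^{(k)} e_i^{(k)}$ with $\lambda_i^{(k)} \in (1/2, 3/2)$, and write $f_i = c_i^{(1)} e_i^{(1)} + c_i^{(2)} e_i^{(2)}$ with $|c_i^{(1)}|^2 + |c_i^{(2)}|^2 = \Vert f_i \Vert_{L^2}^2$ uniformly bounded. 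Passing to a subsequence, Theorem \ref{thm:spectrum-continuity}\eqref{thm:spectrum-continuity-1} gives $e_i^{(k)} \to \bar e^{(k)}$ uniformly and in $\sobo$, where $\bar e^{(k)}$ is an eigenfunction of $\mathbb{T}^N$ whose eigenvalue lies in $[1/2,3/2]$ and hence equals $1$, the only eigenvalue of $\mathbb{T}^N$ in that interval; in particular $\bar e^{(k)} \in V$. Passing further to a subsequence so that $c_i^{(k)} \to c^{(k)}$, we get $f_i \to c^{(1)} \bar e^{(1)} + c^{(2)} \bar e^{(2)}$ in $\sobo$, hence $f = c^{(1)} \bar e^{(1)} + c^{(2)} \bar e^{(2)} \in V$.

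Next I would record the following compatibility fact: for every $g \in G$ and every sequence $g_i \in G_i$ with $g_i \to g$ (such a sequence exists by the equivariant Gromov--Hausdorff convergence $G_i \to G$), one has $g_i^{\ast} f_i \to g^{\ast} f$ in $\sobo$. Indeed, $g_i^{\ast}$ commutes with $\Delta$ and preserves $\mm_i$, so $g_i^{\ast} e_i^{(k)}$ is again an eigenfunction with eigenvalue $\lambda_i^{(k)}$ and unit $L^2$-norm; hence after a subsequence it converges uniformly and in $\sobo$ to an eigenfunction $h^{(k)}$ of $\mathbb{T}^N$, and evaluating $(g_i^{\ast} e_i^{(k)})(x_i) = e_i^{(k)}(g_i^{-1} x_i)$ along points $x_i \to x$, using $g_i^{-1} x_i \to g^{-1} x$ and the uniform convergence $e_i^{(k)} \to \bar e^{(k)}$, identifies $h^{(k)}(x) = \bar e^{(k)}(g^{-1}x) = g^{\ast}\bar e^{(k)}(x)$. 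As every subsequence admits a further subsequence with this same limit, the whole sequence converges, so $g_i^{\ast} f_i = c_i^{(1)} g_i^{\ast} e_i^{(1)} + c_i^{(2)} g_i^{\ast} e_i^{(2)} \to c^{(1)} g^{\ast}\bar e^{(1)} + c^{(2)} g^{\ast}\bar e^{(2)} = g^{\ast} f$.

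Finally, to bound the dimension, I would fix arbitrary $g^{(0)}, g^{(1)}, g^{(2)} \in G$ and sequences $g_i^{(\ell)} \in G_i$ with $g_i^{(\ell)} \to g^{(\ell)}$. Since $W_{j(i),i}$ is $G_i$-invariant and two-dimensional, the three functions $(g_i^{(\ell)})^{\ast} f_i \in W_{j(i),i}$ satisfy a nontrivial linear relation $\sum_{\ell=0}^{2} a_i^{(\ell)} (g_i^{(\ell)})^{\ast} f_i = 0$, which we normalize so that $\max_{\ell} |a_i^{(\ell)}| = 1$. Passing to a subsequence with $a_i^{(\ell)} \to a^{(\ell)}$, so that $\max_{\ell} |a^{(\ell)}| = 1$, and letting $i \to \infty$ in the relation using the compatibility fact gives $\sum_{\ell=0}^{2} a^{(\ell)} (g^{(\ell)})^{\ast} f = 0$ with not all $a^{(\ell)}$ zero. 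As $g^{(0)}, g^{(1)}, g^{(2)}$ were arbitrary, every three elements of $G f$ are linearly dependent, so $\dim_{\mathbb{R}}(\langle G f \rangle) \le 2$, and \eqref{eq:f-dim-3} yields $f \in W_{j_0}$ for some $j_0 \in \{1, \dots, N\}$. The delicate point is the compatibility fact: pinning down the limit of $g_i^{\ast} e_i^{(k)}$ as $g^{\ast}\bar e^{(k)}$ rather than as merely \emph{some} eigenfunction of $\mathbb{T}^N$ is precisely where both the equivariant convergence $G_i \to G$ and the uniform convergence of the eigenfunctions are needed.
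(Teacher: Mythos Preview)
Your proof is correct and follows essentially the same approach as the paper's: both reduce to \eqref{eq:f-dim-3} via the compatibility $g_i^{\ast} f_i \to g^{\ast} f$ whenever $g_i \to g$, and use it to transfer the two-dimensionality of $W_{j(i),i}$ to a bound on $\dim_{\mathbb{R}} \langle Gf \rangle$. The paper phrases the final step by contradiction (producing three almost-orthonormal vectors in the two-dimensional space $W_{j(i),i}$) rather than directly, and leaves both $f \in V$ and the compatibility fact implicit, but these are cosmetic differences.
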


\begin{proof}
   By  \eqref{eq:f-dim-3}, if the lemma fails, there are $n_k\in \mathbb{N}$ for $k \in \{ 1, 2, 3 \}$, $\alpha _{k, \ell } \in \mathbb{R}$, $g_{k, \ell} \in G$ for $ \ell \in \{ 1, \ldots , n_k \}$, such that the vectors 
    \[    \vv _k : = \sum_{\ell = 1}^{n_k} \alpha_{k, \ell} g_{k,\ell }^{\, \ast}(f) \in V   \]
 with  $k \in \{ 1, 2, 3 \}$ form an orthonormal set. Take elements $g_{k,\ell, i } \in G_i$  with $g_{k, \ell , i } \to g_{k, \ell}$ as $i \to \infty$. Then for fixed $k$, the vectors
 \[     \vv_{k, i} : =  \sum_{\ell = 1}^{n_k} \alpha_{k, \ell} g_{k,\ell,i }^{\, \ast}(f_i) \in W_{j(i),i}                  \]
converge to $\vv_k$ in $\sobo$. Hence as $i \to \infty$, for $k \neq \ell$ one has
\[  \Vert \vv_{k,i} \Vert_{\sobo} \to 1  ,\hspace{3cm} \langle \vv_{k,i} , \vv_{\ell, i} \rangle_{\sobo} \to 0 .   \]
This is impossible as $\dim _{\R}(W_{j(i), i}) = 2$.
\end{proof}

By Theorem \ref{thm:spectrum-continuity}\eqref{thm:spectrum-continuity-2}, for each $j \in \{ 1, \ldots , N \} $ there are eigenfunctions $\varphi_{j,i} \in V_i$ with $\varphi_{j,i} \to \varphi_j$ uniformly and in $\sobo$. 

\begin{lemma}\label{lem:min-d-phi-w}
    For each $j \in \{ 1, \ldots , N \}$, the quantity 
    \begin{equation}\label{eq:min-d-phi-w}
        \min  \{  \Vert \varphi_{j,i} - \vv \Vert _{\sobo } \vert \,  \vv \in W_{\ell, i} , \ell \in \{ 1, \ldots , N \}  \} 
    \end{equation}    
    goes to $0$ as $i \to \infty$.
\end{lemma}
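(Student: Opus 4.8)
We argue by contradiction, exploiting that the limit $\varphi_j$ spans, together with $G$, only the two-dimensional space $W_j$, so $\varphi_{j,i}$ cannot put mass on two different blocks of \eqref{eq:vi-decomposition}. Since $G_i$ acts on $V_i$ by $L^2(X_i,\mm_i)$-isometries, I would first replace \eqref{eq:vi-decomposition} by one into mutually $L^2(X_i,\mm_i)$-orthogonal blocks $W_{k,i}$ (still $2$-dimensional, $G_i$-invariant, and spanned by eigenfunctions). Writing $\varphi_{j,i}=\sum_{k=1}^N v_{k,i}$ with $v_{k,i}\in W_{k,i}$, orthogonality gives $\|\varphi_{j,i}\|_{L^2}^2=\sum_k\|v_{k,i}\|_{L^2}^2$; and since all eigenvalues in play lie in $(1/2,3/2)$, the $L^2$- and $H^{1,2}$-norms are uniformly comparable on $V_i$. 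Hence the quantity in \eqref{eq:min-d-phi-w} is at most a fixed multiple of $\big(\sum_{k\neq k(i)}\|v_{k,i}\|_{L^2}^2\big)^{1/2}$ for any choice of indices $k(i)$, so it suffices to produce $k(i)$ with $\sum_{k\neq k(i)}\|v_{k,i}\|_{L^2}^2\to 0$. If this fails, then after passing to a subsequence there is $c>0$ with $\sum_{k\neq\ell}\|v_{k,i}\|_{L^2}^2\geq c$ for all $i$ and all $\ell$; taking $a=a(i)$ to maximize $\|v_{\ell,i}\|_{L^2}$ and $b=b(i)\neq a(i)$ to maximize it among the remaining indices, and then passing to a further subsequence so that $a,b$ are constant, I obtain two distinct indices with $c'\leq\|v_{a,i}\|_{L^2},\|v_{b,i}\|_{L^2}\leq\|\varphi_{j,i}\|_{L^2}$ bounded, where $c'=\sqrt{c/(N-1)}$ (the case $N=1$ being trivial since then $V_i$ is a single block).

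Next I would pass to limits. For each $\ell$ choose an $L^2(X_i,\mm_i)$-orthonormal basis of $W_{\ell,i}$ consisting of eigenfunctions, with eigenvalues in $(1/2,3/2)$; by \eqref{eq:eigenest} they are uniformly bounded, so by Theorem \ref{thm:spectrum-continuity}\eqref{thm:spectrum-continuity-1}, after a subsequence they converge uniformly and in $H^{1,2}$ to eigenfunctions of $\mathbb{T}^N$ whose eigenvalues, lying in $[1/2,3/2]$ and in the spectrum $\{0,-1,-2,\dots\}$, are forced to equal $-1$; thus the limit functions lie in $V$. Because uniform convergence together with weak convergence of the reference measures preserves $L^2$-inner products, these $2N$ limit functions are $L^2(\mathbb{T}^N)$-orthonormal, hence a basis of the $2N$-dimensional space $V$. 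Expanding each $v_{k,i}$ in the corresponding basis with bounded coefficients and passing to a further subsequence, $v_{a,i}\to v_a$, $v_{b,i}\to v_b$ and $\sum_{k\neq a,b}v_{k,i}\to w:=\varphi_j-v_a-v_b$ in $H^{1,2}$, with $\|v_a\|_{L^2},\|v_b\|_{L^2}\geq c'>0$. Each block $W_{\ell,i}$ then converges to the $2$-dimensional subspace $W_\ell^\infty$ spanned by the limiting basis vectors, which is $G$-invariant (approximate $g\in G$ by $g_i\in G_i$ and use uniform convergence) and satisfies $V=\bigoplus_\ell W_\ell^\infty$.

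To finish, note that by \eqref{eq:f-dim-3} every nonzero $f\in W_\ell^\infty$ has $\dim_\R\langle Gf\rangle\leq 2$, hence lies in a single block $W_{j_0}$; moreover two nonzero elements of $W_\ell^\infty$ cannot lie in different blocks, since their (nonzero) sum would then lie in $W_\ell^\infty$, hence in one block, while having two nonzero components in $V=\bigoplus_m W_m$. Therefore $W_\ell^\infty=W_{\sigma(\ell)}$ for some map $\sigma$, and $\sigma$ is a bijection of $\{1,\dots,N\}$ because $V=\bigoplus_\ell W_\ell^\infty$. Consequently $v_a\in W_{\sigma(a)}$, $v_b\in W_{\sigma(b)}$, $w\in\bigoplus_{m\neq\sigma(a),\sigma(b)}W_m$, and $\sigma(a)\neq\sigma(b)$, so $\varphi_j=v_a+v_b+w$ is precisely the decomposition of $\varphi_j$ along $V=\bigoplus_m W_m$. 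Since $\varphi_j\in W_j$, all components of $\varphi_j$ in blocks $\neq j$ vanish; as $\sigma(a)\neq\sigma(b)$, at least one of $v_a,v_b$ is such a component and hence zero, contradicting $\|v_a\|_{L^2},\|v_b\|_{L^2}\geq c'>0$.

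The step I expect to be the main obstacle is the passage to the limiting decomposition $V=\bigoplus_\ell W_\ell^\infty$: one must arrange the blocks $W_{k,i}$ to be mutually $L^2$-orthogonal so that the $2N$ limiting eigenfunctions remain linearly independent — equivalently, so that distinct blocks $W_{a,i},W_{b,i}$ cannot collapse to a common limit subspace — since it is exactly this that makes $\sigma$ a bijection and produces the contradiction in the last step. The remaining points (comparability of $L^2$ and $H^{1,2}$ norms on $V_i$, boundedness of the coefficients, and compatibility of uniform, $H^{1,2}$, and weak-measure convergence under linear combinations of eigenfunctions) are routine.
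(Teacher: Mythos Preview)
Your argument is correct and follows essentially the same route as the paper: both argue by contradiction, pass to $\sobo$-limits of the block components using Theorem~\ref{thm:spectrum-continuity}, and use \eqref{eq:f-dim-3} to force limits of elements of a single $G_i$-invariant block to land in a single $W_{j_0}$, contradicting $\varphi_j\in W_j$. The only real difference is packaging: the paper invokes Lemma~\ref{lem:w-ji-to-w-j} and splits $\varphi_{j,i}$ into just one optimal block plus its orthogonal complement, whereas you reprove that lemma inline in the stronger form $W_\ell^\infty=W_{\sigma(\ell)}$ for a permutation $\sigma$ by tracking all $N$ blocks at once; your concern about arranging the $W_{k,i}$ to be mutually $L^2$-orthogonal is not an obstacle, since the simultaneous diagonalization of Lemma~\ref{lem:simultaneous-diagonalization} (applied to an abelian group acting by linear isometries) already yields orthogonal blocks, and the paper's proof uses this implicitly as well.
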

\begin{proof}
    If the lemma fails, after passing to a subsequence, there is a sequence $\ell (i) \in \{ 1, \ldots , N \} $ for which we can write 
    \[   \varphi_{j,i} =  \varphi_{j,i}^a + \varphi_{j,i}^b                              \]
    with $\varphi_{j,i}^a \in W_{\ell (i) ,i}$, $\varphi_{j,i}^b \in W_{\ell (i), i}^{\perp} \leq V_i$, and 
     \[ \min \left\{   \liminf_{i \to \infty}    \Vert \varphi_{j,i} ^a \Vert_{\sobo} , \liminf_{i \to \infty}  \Vert \varphi_{j,i}^b \Vert_{\sobo } \right\}  > 0 .  \]
     This is done by choosing $\ell (i) \in \{ 1, \ldots, N \} $ that attains the minimum in  \eqref{eq:min-d-phi-w}, and defining  $\varphi_{j,i}^a$ to be the orthogonal projection of $\varphi_{j,i}$ to $W_{\ell (i),i}$. Also let $\ww _ i \in W_{\ell (i), i} $ be a unit vector orthogonal to $\varphi_{j,i}^a$. Since each $\varphi_{j,i}^a$, $\varphi_{j,i}^b$, $\ww_i$ is a linear combination of eigenfunctions in $V_i$, by Theorem \ref{thm:spectrum-continuity}\eqref{thm:spectrum-continuity-1} after passing to a subsequence we can assume that 
     \[     \varphi_{j,i}^a \to \vv^a ,  \hspace{1cm} \varphi_{j,i}^b \to \vv^b , \hspace{1cm} \ww _i \to \ww     \]
     uniformly and in $\sobo$ for some mutually orthogonal non-zero vectors $\vv^a, \vv^b, \ww \in V$.  By Lemma \ref{lem:w-ji-to-w-j}, $\{ \vv ^a , \ww \}$ forms a basis of $W_{j_0}$ for some $j_0$, and $\vv ^b \in W_{j_0} ^{\perp} \leq V$. This would mean that $\varphi_j = \vv ^a + \vv ^b + \ww$ is not in  $W_{j}$; a contradiction. 
\end{proof}

Let $\varphi_{j,i}^a$, $\ww _i$ be as in the proof of Lemma \ref{lem:min-d-phi-w} and let $\psi _{j,i } ^a : = \Vert \varphi_{j,i} ^a \Vert _{\sobo} \ww _i \in W_{\ell (i) , i } $. Then
\[      \varphi_{j,i}^a \to \varphi_j , \hspace{3cm} \psi _{j,i}^a \to \pm \psi_{j}       \]
uniformly and in $\sobo$ as $i \to \infty$. After possibly multiplying $\psi_{j,i}^a$ by $-1$, we can assume $\psi_{j,i}^a \to \psi_j$. We point out that the awkwardness of $\varphi_{j,i}^a$ being in $W_{\ell, i}$ instead of $W_{j,i}$ emanates from the fact that we have a repeated eigenvalue in the limit space, and we have a priori  no way of telling apart the spaces $W_{j,i}$ (see the discussion before \cite[Theorem 7.3]{cheeger-colding-iii}). 

Consider the map $\Phi_i : X_i \to V_i$ given by 
\[   \Phi_i (x)  = \sum_{j = 1}^N [  \varphi_{j,i} ^a (x) \varphi_{j,i} ^a + \psi _{j,i}^a (x) \psi _{j,i}^a ] .   \]
Fix $g \in G_i$. Since $W_{j,i}$ is $G_i$-invariant for each $j$, one has 
\begin{align*}
g^{\ast} \varphi_{j,i}^a & = \cos ( \theta _{j,i} )\varphi_{j,i}^a + \sin  (\theta _{j,i} )\psi _{j,i}^a ,  \\
g^{\ast} \psi_{j,i}^a & = - \sin ( \theta _{j,i} )\varphi_{j,i}^a + \cos  (\theta _{j,i} )\psi _{j,i}^a 
\end{align*}
for some angles $\theta_{j,i} \in \mathbb{S}^1$. Hence
\begin{align*}
  g ^{\ast} \Phi_i (x)    & = \sum _{j =1} ^N [ g^{\ast} \varphi_{j,i} ^a (g  x)  g^{\ast}\varphi_{j,i} ^a + g^{\ast}\psi _{j,i}^a ( g x) g^{\ast} \psi _{j,i}^a ]  \\
  & = \sum _{j = 1}^N  [ \cos ^2  (\theta _{j,i} ) + \sin ^2 (\theta _{j,i} )] [  \varphi_{j,i} ^a (g  x) \varphi_{j,i} ^a + \psi _{j,i} ^a (gx) \psi _{j,i} ^a ] \\
  & = \Phi_i (gx),
\end{align*}
so $\Phi_i$ is $G_i$-equivariant. For each $i$, let $U_i \subset V_i$ denote the set of vectors whose projection to $W_{j,i}$ is non-zero for all $j$, and let $\pi_i : U_i \to U_i $ be the map given by 
\[   \pi _i (z_1, \ldots , z_N) : = \left(  \frac{z_1}{\Vert z_1 \Vert _{\sobo} } , \ldots , \frac{z_N}{\Vert z_N \Vert _{\sobo} }  \right)                \]
with $z_j \in W_{j,i} $ for each $j$. Since the decomposition in  \eqref{eq:vi-decomposition} is $G_i$-invariant, then so is the set $U_i$, and $\pi_i$ commutes with the action of $G_i$. Finally, we identify $V$ and $V_i$ with $\mathbb{R}^{2N}$ via the bases $\{ \varphi_1, \psi _1 , \ldots , \varphi_N , \psi _N \}  $ and $ \{ \varphi _{1,i}^a , \psi _{1,i}^a, \ldots , \varphi _{N,i}^a , \psi _{N,i}^a \} $ respectively. After this identification, $\Phi_i$ converges uniformly to the isometric inclusion of $\mathbb{T}^N$ into $\mathbb{R}^{2N}$ given by  \eqref{eq:torus-inclusion}. By Corollary \ref{cor:no-small-subgroups} combined with Corollary \ref{cor:free},  the action of $G_i$ on $X_i$ is free for large $i$. Then by Theorem \ref{thm:canonical-homeomorphism} combined with Remark \ref{rem:equi-regular}  and the fact that both $\Phi_i$ and $\pi_i$ are $G_i$-equivariant, the result follows.  
\end{proof}

\begin{proof}[Proof of Theorem \ref{thm:mmp}] By contradiction, assume there is a sequence of $\rcd (K,N)$ spaces $(X_i, \mathsf{d}_i, \mm_i)$ with 
\[ \diam (X_i) \to 0,  \hspace{3cm}  \bet (X_i) = N , \]
but $X_i$ not bi-H\"older homeomorphic to a flat torus. In \cite{mondello-mondino-perales}, it is shown that for $i$ large, $X_i$ is homeomorphic to $\mathbb{T}^N$, and a sequence of finite sheeted covers $X_i^{\prime} \to X_i$ is constructed in such a way that $X_i^{\prime}$ converges in the measured Gromov--Hausdorff sense to the flat torus $ \mathbb{T}^N$. 

Let $G_i \leq \iso (X_i')$ be the group of deck transformations. By Theorem \ref{thm:equivariant-torus}, for $i$ large enough, there are free actions $G_i \to \iso (\mathbb{T}^N)$ and $G_i$-equivariant bi-H\"older homeomorphisms $F_i : X_i ' \to \mathbb{T}^N$ that descend to bi-H\"older homeomorphisms $X_i = X_i ' / G_i \to \mathbb{T}^N/G_i$. Since $\mathbb{T}^N/G_i$ is locally isometric to $\mathbb{T}^N$, it is also a flat torus, which is a contradiction.
\end{proof}

\subsection*{Acknowledgments}
The authors would like to thank Shouhei Honda, Raquel Perales, Xiaochun Rong, and Daniele Semola for helpful discussions, Vitali Kapovitch for sharing the reference \cite{fisher-kalinin-spazier}, and Christine Escher for carefully reading and providing comments on a previous version of this paper. The authors are also grateful to two anonymous referees for their valuable feedback.  While preparing this paper, Sergio Zamora held a Postdoctoral Fellowship at Max Planck Institute for Mathematics at Bonn. Xingyu Zhu held a Postdoctoral researcher position at the University of Bonn and gratefully acknowledge the financial support by the Deutsche Forschungsgemeinschaft (DFG) within the CRC 1060, at University of Bonn project number 211504053.


\end{document}